\newtheorem{Theorem}{Theorem}[section]
\newtheorem{Proposition}[Theorem]{Proposition}
\newtheorem{Lemma}[Theorem]{Lemma}
\newtheorem{Cor}[Theorem]{Corollary}
\numberwithin{equation}{section}
\newcommand{\ds}{\displaystyle}
\newcommand{\bs}[1]{\boldsymbol{#1}}
\newcommand{\cP}{\mathcal{P}}
\newcommand{\cQ}{\mathcal{Q}}
\newcommand{\cS}{\mathcal{S}}
\newcommand{\cX}{\mathcal{G}}
\title{Connection coefficients and 
monodromy representations 
for a class of Okubo systems of ordinary 
differential equations}
\author{
Shotaro KONNAI \\
{\normalsize Department of Mathematics, Kobe University}
}
\date{}
\begin{document}
\maketitle

\begin{abstract}
Explicit connection coefficients and monodromy 
representations 
are constructed for the canonical solution matrices 
of a class of Okubo systems of 
ordinary differential equations 
as an application of the Katz operations.  
\end{abstract}

\arraycolsep=2pt
\maketitle

\section*{Introduction}
The Okubo systems are one of the most important classes 
of Fuchsian systems of ordinary differential equations 
on the Riemann sphere $\mathbb{P}^1$; 
for the definition of an Okubo system, see Section 1.
A characteristic feature of this class is that 
each Okubo system admits a solution matrix consisting 
of non-holomorphic 
solutions around its finite singular points, 
which we call below {\em Okubo's canonical solution matrix}. 
Okubo \cite{Okubo} studied basic properties 
and in particular established an explicit determinant 
formula for such a solution matrix (see Theorem \ref{det:Okubo1}).  
This determinant formula guarantees that Okubo's 
canonical solution matrix is in fact a fundamental 
solution matrix.  
Also, it is because of this determinant formula that 
Okubo systems provide with good models for global 
analysis of Fuchsian systems. 

Yokoyama \cite{TY} classified the types of 
irreducible rigid Okubo systems 
under the condition that the nontrivial local exponents 
are mutually distinct at each finite singular point; 
this class consists of 
eight types 
${\rm I}$, ${\rm II}$, 
${\rm III}$, ${\rm IV}$ and 
${\rm I}^\ast$, ${\rm II}^\ast$, 
${\rm III}^\ast$, ${\rm IV}^\ast$, 
which is usually referred to as {\em Yokoyama's list}.  
For each type in Yokoyama's list, 
Haraoka constructed 
a canonical form of the Okubo system
\cite{YHEq}, as well as the corresponding 
monodromy representation, 
up to the action of diagonal matrices \cite{YHMon}.  
The purpose of this paper is to propose a method 
for determining the 
explicit monodromy representation for Okubo's 
canonical solution matrix, 
including the diagonal matrix factor
which has not been fixed in \cite{YHMon}.  
This problem is in fact reduced to determining 
the connection coefficients 
among non-holomorphic solutions for the Okubo system. 
We solve this connection problem by means of the middle convolutions 
for Schlesinger systems.  
In this paper we deal with the particular cases of types ${\rm I}$, ${\rm I}^*$, ${\rm II}$
and ${\rm III}$; other cases will be discussed in our forthcoming paper. 

\par\medskip
It is known by a celebrated work of 
Katz \cite{Katz} that any irreducible rigid 
local system can be constructed by a finite iteration of the 
so-called Katz operations 
(additions and middle convolutions)
from a local system of rank one.  
Dettweiler and Reiter \cite{DR2007}
reformulated 
the Katz operations 
so that they can be applied 
directly to Schlesinger systems and their monodromy 
representations.  
Therefore, any irreducible 
rigid Schlesinger system, as well as its 
monodromy representation, can be constructed 
in principle 
from the rank one case. 
In the case of Okubo systems, Yokoyama 
independently introduced the notions of 
extending and restricting operations for differential systems
and their monodromy matrices \cite{TY2}.  
He also proved that any generic rigid Okubo system
can be constructed by a finite iteration of his operations. 
It is clarified by Oshima \cite{OY}  
the Katz operations and the Yokoyama operations are essentially 
equivalent as far as the Okubo systems are concerned. 
It is not completely clarified, however, how the Katz (or Yokoyama)  
operations can be realized on the level of fundamental solution matrices 
and their monodromy representations. 

Our method for solving the connection problem is based on the 
inductive construction of Okubo systems by middle convolutions.  
For a given 
Schlesinger system and a fundamental solution matrix, 
we give an explicit construction of a fundamental solution matrix 
of its middle convolution in the sense of \cite{DR2007}. 
We then apply this procedure for constructing the Okubo systems 
in Yokoyama's list and their explicit monodromy representations. 

\par\medskip
This paper is organized as follows.  
We propose in Section 1 the notion of {\em Okubo's canonical solution matrix} 
$\Psi(x)$ for 
an Okubo system
\begin{equation}\label{eq:InOkubo}
(x-T)\frac{d}{dx}Y=AY, \quad A\in{\rm Mat}(n;\mathbb{C})
\end{equation}
of ordinary differential equations. 
This solution matrix, introduced by Okubo \cite{Okubo}, 
consists of non-holomorphic solutions around finite singular points, and 
forms a fundamental solution matrix under a certain genericity condition. 
We also give a remark on the relation  between 
the connection coefficients among non-holomorphic local solutions 
and the monodromy matrices for Okubo's canonical solution matrix. 
The monodromy matrices for $\Psi(x)$ 
are determined from the connection coefficients through formula 
\eqref{eq:Mk}. 
Our main results are collected in Section 2.  
We fix a canonical form for each of the Okubo systems of 
types ${\rm I}$, ${\rm I}^*$, ${\rm II}$ and ${\rm III}$
of Yokoyama's list as in Theorems 2.1, 2.2 and 2.3, and 
give the explicit monodromy matrices 
for the corresponding canonical solution matrix 
in Theorem 2.4, 2.5 and 2.6.
Theses results are proved in subsequent sections 
by iterations of the Katz operations.  

 Our method for determining the connection coefficients and the 
 monodromy matrices is based on the middle convolution for Schlesinger 
 systems of \cite{DR2007}.  
The procedure of the middle convolution for a Schlesinger system
\begin{equation}\label{eq:InSchl}
\frac{d}{dx}y=\sum_{k=1}^r\frac{A_k}{x-t_k}Y
\end{equation} 
is divided into three steps, which we call the 
{\em convolution}, the {\em K-reduction} and the {\em L-reduction}.  
Analyzing these three steps, in Section 3 we clarify how one can construct a 
fundamental solution matrix for the middle convolution from a 
given fundamental solution matrix of the system \eqref{eq:InSchl}.  

A special combination of Katz operations 
\begin{equation}
{\rm add}_{(0,\ldots,\rho,\ldots,0)}\circ{\rm mc}_{-\rho-c}\circ{\rm add}_{(0,\ldots,c,\ldots,0)}
\end{equation}
(middle convolution with additions at a singular points)  
is used effectively for constructing the Okubo systems in 
Yokoyama's list.  
In fact, 
each Okubo system in Yokoyama's list can be constructed 
by a finite iteration of such operations from the rank one case. 
We formulate in Section 4 this type of operations 
for a general Okubo system and its monodromy representation.
We show that the resulting Schlesinger system is also an Okubo system 
as explicitly given by \eqref{eq:mcadd}.
For the monodromy representation,  \eqref{Mon:MCadd} provides the moodromy representation
 of the resulting system specified by \eqref{eq:mcadd}.  
Furthermore we obtain the connection coefficients for the canonical solution matrix of the resulting system as in Theorem \ref{connection} by applying these results.  

In Section 5 we prove our main theorems for Okubo systems 
of type ${\rm I}$, ${\rm I^\ast}$, ${\rm II}$ and ${\rm III}$.
Our proof is divided into two steps.
We first compute a certain part of the connection coefficients for 
each of those Okubo systems by applying Theorem \ref{connection} 
repeatedly. 
Second, we determine the other connection coefficients by 
the symmetry of the Okubo system.
Our argument is based on the fact 
that the permutaion of characteristic exponents at a singular 
point can be 
realized by the adjoint action of a contant matrix. 

\par\medskip

We remark that our approach is similar to that of Yokoyama \cite{TY3} 
in which recursive relations for connection coefficients are investigated  
in the framework of extending operations for Okubo systems. 
It is not clear, however, whether the connection coefficients for individual 
Okubo systems in Yokoyama's list can be directly determined only 
from the results of \cite{TY3}.  
We also remark that
the connection problem 
for rigid irreducible Fuchsian differential equations of scalar type 
has been discussed by 
Oshima \cite{TO} as an application of the Katz operations. 

\tableofcontents

\section{Okubo systems and their canonical solution matrices}

In this section we formulate the notion of a 
{\em canonical solution matrix} 
for a system of ordinary differential equations 
of Okubo type, and investigate 
fundamental properties of its connection coefficients and  monodromy matrices.  

\subsection{Canonical solution matrix for an Okubo system}
Let $t_1,\ldots,t_r$ be $r$ distinct points in $\mathbb{C}$.  
Fixing an $r$-tuple $(n_1,\ldots,n_r)$ of positive integers with 
$n_1+\cdots+n_r=n$, we denote by 
\begin{equation}
T=\mbox{\rm diag}(t_1I_{n_1},t_2I_{n_2},\ldots,t_rI_{n_r})=
\begin{pmatrix}
t_1I_{n_1} & \\
& t_2I_{n_2} &\\
&&\ddots\\
&&& t_rI_{n_r}
\end{pmatrix}
\end{equation}
the block diagonal matrix whose diagonal blocks are scalar matrices 
$t_i I_{n_i}$ ($i=1,\ldots,r$), where $I_k$ 
stands for the $k\times k$ identity 
matrix.  
A system of ordinary differential equations on $\mathbb{P}^1$ of the form 
\begin{equation}\label{eq:Okubo}
(xI_n-T)\frac{d}{dx}Y=A\,Y\qquad 
(A\in\mbox{Mat}(n;\mathbb{C})), 
\qquad 
\end{equation}
is called an {\em Okubo system} of type $(n_1,\ldots,n_r)$, 
where $Y=(y_1,\ldots,y_m)^{\scriptsize\mbox{t}}$ is the 
column vector of unknown functions.  
For each $i,j=1,\ldots r$, we denote by $A_{ij}$
the $(i,j)$-block of the matrix $A$: 
\begin{equation}\label{eq:OkA}
A=\begin{pmatrix}
A_{11} & \ldots & A_{1r} \\
\vdots & & \vdots \\ 
A_{r1} & \ldots & A_{rr}
\end{pmatrix},\qquad 
A_{ij}\in\mbox{Mat}(n_i,n_j;\mathbb{C}).  
\end{equation}
Then the Okubo system \eqref{eq:Okubo} can be equivalently 
rewritten in the Schlesinger form 
\begin{equation}\label{eq:Sch}
\frac{d}{dx} Y=\left(\sum_{k=1}^{r}\frac{A_k}{x-t_k}\right)Y, 
\qquad
A_k
=
\begin{pmatrix}
&{\large O} & \\ 
A_{k1} & \ldots & A_{kr}\\[2pt]
&{\large O} & 
\end{pmatrix}\quad(k=1,\ldots,r). 
\end{equation}

We denoting by $\alpha^{(k)}_{1},\ldots,\alpha^{(k)}_{n_k}$ 
the eigenvalues of the diagonal block $A_{kk}$ ($k=1,\ldots,r$) 
and by $\rho_1,\ldots,\rho_n$ the eigenvalues of $A=A_1+\cdots+A_r$; 
these eigenvalues are subject to the {\em Fuchs relation}
\begin{equation}
\sum_{k=1}^{r}\sum_{j=1}^{n_k}\alpha^{(k)}_j=\sum_{j=1}^{n}\rho_j.
\end{equation}
We assume hereafter that 
\begin{equation}\label{assumedet1}
\alpha^{(k)}_{i}-\alpha^{(k)}_{j}\notin 
\mathbb{Z}\backslash \{0\}
\quad(1\le i<j\le n_k),\quad
\alpha^{(k)}_{j}\notin \mathbb{Z}\quad(1\le j\le n_k)
\end{equation}
for $k=1,\ldots,r$. 

\par\medskip
A characteristic feature of a system of Okubo type 
is that, if the local exponents are generic, there exists 
a {\em canonical} fundamental solution matrix 
consisting of singular solutions around 
the finite singular points $x=t_1,\ldots,t_r$. 

In what follows, we 
denote by $\mathcal{O}(\widetilde{\mathcal{D}})$ the vector space 
of all multivalued holomorphic functions on 
$\mathcal{D}=\mathbb{C}\backslash\{t_1,\ldots,t_r\}$ and 
by $\mathcal{O}_a$ the ring of germs of homomorphic functions 
at $x=a$. 
Fixing a base point $p_0\in D$, 
we identify a multivalued holomorphic function on $D$ with 
a germ of holomorphic function at $x=p_0$ which can be continued 
analytically along any continuous path in $D$ starting from $p_0$. 
We denote by 
$\gamma_k$ ($k=1,\ldots,r$) and $\gamma_\infty$ the homotopy classes 
in $\pi_1(\mathcal{D},p_0)$ of continuous paths 
encircling $x=t_k$ and $x=\infty$ 
in the positive direction, respectively; 
we choose these paths so that $\gamma_\infty \gamma_1\cdots \gamma_r=1$ 
in $\pi_{1}(\mathcal{D},p_0)$ (See Figure 1).  
In this paper, for two continuous paths $\alpha,\beta$,  we use the notation 
$\beta\alpha$ to refer to the path obtained by connecting 
$\alpha$ and $\beta$ in this order. 
Choosing the base point $p_0$ appropriately, we assume 
that ${\rm Im}\,(t_i-p_0)/(t_1-p_0)<0$ for $i=2,\ldots,r$, 
and assign the arguments 
$\theta_k={\rm arg}(p_0-t_k)$ 
($k=1,2,\ldots,r$) so that 
\begin{equation}
\theta_1>\theta_2>\cdots>\theta_r>\theta_1-\pi.  
\end{equation}
When we consider the behavior of 
$f\in\mathcal{O}(\widetilde{\mathcal{D}})$ around  $x=t_k$, 
we use the analytic continuation of $f\in\mathcal{O}_{p_0}$ 
along the line segment connecting $p_0$ and $t_k$. 
\begin{figure}[htbp]
\centering
\unitlength=1.2pt
\begin{picture}(80,80)(0,-14)
\put(30,46){\circle*{3}}
\put(80,46){\circle*{3}}
\put(178,46){\circle*{3}}
\put(120,43){$\cdots$}
\put(20,48){$t_1$}
\put(70,48){$t_2$}
\put(180,48){$t_r$}
\put(122,-10){$p_0$}
\put(35,25){$\gamma_1$}
\put(110,30){$\gamma_2$}
\put(165,18){$\gamma_r$}
\put(70,34){\vector(-2,1){2}}
\put(103,32){\vector(-2,3){2}}
\put(170,29){\vector(3,2){2}}
\end{picture}
\includegraphics[scale=0.85, bb=115 -10 300 100]{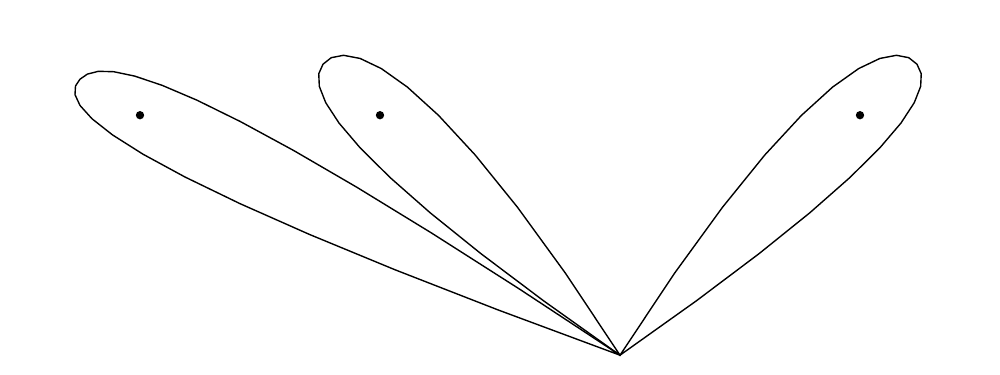}
\caption{}
\end{figure}

If condition \eqref{assumedet1} is satisfied, 
for each $k=1,\ldots,r$, 
system \eqref{eq:Okubo} has a unique fundamental 
solution matrix $\Psi^{(k)}(x)\in {\rm Mat}(n;\mathcal{O}(\widetilde{\mathcal{D}}))$ of the form 
\begin{equation}
\Psi^{(k)}(x)=F^{(k)}(x)(x-t_k)^{A_{k}},\quad
 F^{(k)}(x)\in {\rm Mat}(n;\mathcal{O}_{t_k}),
\quad  F^{(k)}(t_k)=I_n, 
\end{equation}
where we specify the branch of 
$(x-t_k)^{A_k}=\exp(A_k\log(x-t_k))$ near $x=p_0$ by 
${\rm arg}(p_0-t_k)=\theta_k$.  
We call $\Phi^{(k)}(x)$ the {\em local canonical solution matrix} of \eqref{eq:Okubo} at $x=t_k$.  
We decompose this $\Psi^{(k)}(x)$ as
\begin{equation}
\Psi^{(k)}(x)=(\Psi_1^{(k)}(x),\ldots,\Psi_r^{(k)}(x)),\quad 
\Psi_j^{(k)}(x) \in {\rm Mat}(n,n_j;\mathcal{O}(\widetilde{\mathcal{\mathcal{D}}})).
\end{equation}
Then the $k$th block $\Psi_k^{(k)}(x)$ represents a basis 
of all singular solutions around $x=t_k$, and its local monodromy 
is given by 
\begin{equation}
\gamma_k\!\cdot\!\Psi^{(k)}_k(x)=\Psi^{(k)}_k(x)e(A_{kk}), 
\end{equation}
where $e(\mu)=\exp(2\pi i\mu)$. 
Collecting the singular solutions $\Psi^{(k)}_{k}(x)$ ($k=1,\ldots,r$) 
together, 
we define the $n\times n$ solution matrix $\Psi(x)$ by
\begin{equation}
\Psi(x)=(\Psi^{(1)}_1(x),\Psi^{(2)}_2(x),\ldots,\Psi^{(r)}_r(x))
\in{\rm Mat}(n;\mathcal{O}(\widetilde{\mathcal{D}})). 
\end{equation}
We call this $\Psi(x)$ the {\em canonical solution matrix} 
for the Okubo system \eqref{eq:Okubo}.  

As for the determinant of $\Psi(x)$, the following theorem 
is known as Okubo's determinant formula 
(Okubo \cite{Okubo}, Kohno \cite{Kohno}).  
\begin{Theorem}\label{det:Okubo1}
Under the assumption \eqref{assumedet1}, 
the determinant of the canonical solution matrix $\Psi(x)$ 
is explicitly given by 
\begin{equation}
{\rm det}(\Psi(x))=
\frac{\prod_{k=1}^r\prod_{j=1}^{n_k}\Gamma(1+\alpha^{(k)}_{j})}
{\prod_{i=1}^n\Gamma(1+\rho_i)}
\prod_{k=1}^{r}(x-t_k)^{\sum_{j=1}^{n_k}\alpha^{(k)}_j}. 
\end{equation}
 \end{Theorem}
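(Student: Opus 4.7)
The plan is to use Abel--Liouville to reduce the statement to the evaluation of a single multiplicative constant, and then to pin that constant down by passing to $x=\infty$ and computing a connection determinant.

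First, I would apply Abel--Liouville to the Schlesinger form \eqref{eq:Sch}. Each residue $A_k$ has only its $k$-th block row nonzero, so $\mathrm{tr}(A_k)=\mathrm{tr}(A_{kk})=\sum_{j=1}^{n_k}\alpha_j^{(k)}$, and hence
\[
\frac{(\det\Psi)'(x)}{\det\Psi(x)}
=\mathrm{tr}\!\left(\sum_{k=1}^r\frac{A_k}{x-t_k}\right)
=\sum_{k=1}^r\frac{\sum_{j=1}^{n_k}\alpha_j^{(k)}}{x-t_k},
\]
so that $\det\Psi(x)=C\prod_{k=1}^r(x-t_k)^{\sum_j\alpha_j^{(k)}}$ for some $C\in\mathbb{C}^{\times}$. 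This already reproduces the $x$-dependence in the theorem, and the Fuchs relation $\sum_{k,j}\alpha_j^{(k)}=\sum_i\rho_i$ makes the total exponent at $\infty$ match $\mathrm{tr}(A)$, as it must.

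To identify $C$, I would compare $\Psi(x)$ with a canonical fundamental solution at $\infty$. Under generic non-resonance of the $\rho_i$, the Okubo system admits a fundamental solution $Y_\infty(x)=G(x)Px^{D}$ at $\infty$ with $G(\infty)=I$, $A=PDP^{-1}$, and $D=\mathrm{diag}(\rho_1,\ldots,\rho_n)$. The relation $\Psi=Y_\infty M$ for the constant connection matrix $M$ then gives, by matching leading terms at $\infty$, $C=\det P\cdot\det M$. This reduces the problem to computing the connection determinant $\det M$.

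The evaluation of $\det M$, and the emergence of the Gamma factors, is the genuinely hard step. I would try two complementary routes. Route (i): derive Mellin--Barnes or Euler integral representations for each singular block $\Psi^{(k)}_k(x)$ and reduce $\det\Psi$ to an iterated integral that evaluates, by Barnes-- or Selberg-type identities, to the ratio $\prod_{k,j}\Gamma(1+\alpha_j^{(k)})/\prod_i\Gamma(1+\rho_i)$. Route (ii): induct on $r$; the base case $r=1$ is immediate, since then $\Psi(x)=(x-t_1)^{A_1}$, $A=A_1$, $\rho_j=\alpha_j^{(1)}$, and $C=1$, and the inductive step would use a controlled operation -- for instance, a middle-convolution-with-additions as developed later in the paper -- that raises $r$ by one while multiplying $\det\Psi$ by exactly the expected Gamma ratio. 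In either route, the careful branch-tracking and the exact extraction of the Gamma factors is the delicate part that I expect to consume the bulk of the work.
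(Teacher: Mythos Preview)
The paper does not prove this theorem; it is quoted as a known result of Okubo \cite{Okubo} and Kohno \cite{Kohno}, so there is no ``paper's own proof'' to compare against. I can only evaluate your plan on its merits.

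Your Abel--Liouville reduction is correct and standard, and it is exactly how any proof of this formula begins: it reduces the statement to the evaluation of a single constant $C$. The difficulty, as you recognise, is entirely in pinning down that constant.

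However, both of your proposed routes for evaluating $C$ have genuine gaps. Route~(ii), induction on $r$ via the middle-convolution-with-additions of Section~4, does not work as stated: that operation \emph{does not change $r$}. The operation \eqref{mcadd} keeps the same singular set $\{t_1,\ldots,t_r\}$ and alters the rank and the block sizes $n_k$; there is no mechanism in this paper (or in the standard Katz formalism) that introduces a new finite singular point. So you cannot step from $r$ to $r+1$ this way. Route~(i) is too vague to assess, but more importantly, Theorem~\ref{det:Okubo1} is stated for an \emph{arbitrary} Okubo system satisfying \eqref{assumedet1}, not just rigid ones. For a generic non-rigid Okubo system there is no reason to expect Euler or Mellin--Barnes integral representations of the columns $\Psi_k^{(k)}(x)$, so an approach through Selberg-type integral identities would at best handle a special subclass.

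The classical proofs (see Kohno's book) proceed differently: after the Abel--Liouville step, one computes the connection matrix from $\Psi(x)$ to a canonical solution at $x=\infty$ \emph{column by column}, using the Mellin (or Laplace) transform of the Okubo system, which converts it into a first-order difference equation whose solutions are explicit Gamma products. That is where the $\Gamma$-factors actually arise, and it works for any Okubo system under \eqref{assumedet1}, rigid or not. Your plan correctly identifies the reduction to $\det M$ but does not supply a mechanism to compute it in the stated generality.
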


\begin{Cor}\label{det:Okubo2}
Under the assumption \eqref{assumedet1}, 
the canonical solution matrix $\Psi(x)$ 
of the Okubo system \eqref{eq:Okubo} is a 
fundamental solution matrix
if and only if $\rho_i\notin \mathbb{Z}_{\le 0}$ 
for $i=1,\ldots,n$. 
\end{Cor}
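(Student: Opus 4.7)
The plan is to deduce the corollary as an immediate consequence of Okubo's determinant formula (Theorem~\ref{det:Okubo1}). A square matrix $\Psi(x)\in{\rm Mat}(n;\mathcal{O}(\widetilde{\mathcal{D}}))$ whose columns are solutions of the Okubo system is a fundamental solution matrix if and only if its determinant does not vanish identically on $\widetilde{\mathcal{D}}$; so the entire question reduces to deciding when the explicit expression for $\det(\Psi(x))$ given in Theorem~\ref{det:Okubo1} is the zero function.

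To carry this out I would split the right-hand side of the determinant formula into its two natural pieces. The monomial factor $\prod_{k=1}^{r}(x-t_k)^{\sum_{j}\alpha^{(k)}_j}$ is a product of branches of power functions that are nowhere zero on $\mathcal{D}$, and hence can never cause $\det(\Psi(x))$ to vanish identically. All of the information therefore lives in the scalar prefactor
$$C \;:=\; \frac{\prod_{k=1}^{r}\prod_{j=1}^{n_k}\Gamma(1+\alpha^{(k)}_{j})}{\prod_{i=1}^{n}\Gamma(1+\rho_i)}.$$
I would then invoke the standard facts that $\Gamma$ is nowhere zero on its domain and has simple poles exactly at the non-positive integers. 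Assumption~\eqref{assumedet1} rules out $\alpha^{(k)}_j\in\mathbb{Z}$, so each factor $\Gamma(1+\alpha^{(k)}_j)$ in the numerator is finite and nonzero and the numerator contributes no vanishing. The prefactor $C$ therefore equals zero precisely when the denominator becomes infinite, i.e.\ when some $\Gamma(1+\rho_i)$ has a pole, which is exactly the condition that some $\rho_i$ be a non-positive integer. Negating this yields the stated equivalence $\det(\Psi(x))\not\equiv 0 \iff \rho_i\notin\mathbb{Z}_{\le 0}$ for every $i=1,\ldots,n$.

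There is no serious obstacle: the proof is essentially a one-line reading of Theorem~\ref{det:Okubo1} combined with the elementary pole/zero behaviour of the Gamma function. The only small points requiring care are (i) that each branch of $(x-t_k)^{s}$ is a nowhere-vanishing multivalued holomorphic function on $\mathcal{D}$, so the monomial factor is irrelevant to the vanishing question, and (ii) that hypothesis~\eqref{assumedet1} is used precisely to guarantee that the numerator of $C$ is finite and nonzero; once these are noted, both directions of the biconditional follow at once.
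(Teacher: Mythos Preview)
Your proposal is correct and matches the paper's intent: the corollary is stated in the paper without proof, as an immediate consequence of Theorem~\ref{det:Okubo1}, and your argument spells out exactly that deduction by reading off the pole/zero structure of the Gamma factors in the determinant formula.
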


\subsection{Connection coefficients and monodromy matrices}

We  give some remarks 
on the relationship between the connection coefficients and 
the monodromy
matrices for the canonical solution matrix $\Psi(x)$ 
of the Okubo system \eqref{eq:Okubo}.

Let $\Psi(x)$ be the canonical solution matrix of the Okubo system \eqref{eq:Okubo}. 
The analytic continuation of $\Psi(x)$ by $\gamma_k$ 
($k=1,\ldots,r$) 
defines an $r$-tuple of monodromy matrices 
${\bs M}=(M_1,\ldots,M_r)
\in{\rm GL}(n;\mathbb{C})^r$ as 
\begin{equation}
\gamma_k\!\cdot\!\Psi(x)=\Psi(x)M_k \quad(k=1,\ldots,r). 
\end{equation}
Through the analytic continuation along $\gamma_k$ 
to a neighborhood of $x=t_k$, 
the $j$-th column block $\Psi^{(j)}_j(x)$ ($j=1,\ldots,r$)
is expressed uniquely in the form 
\begin{equation}\label{eq:Ckj}
\Psi_{j}^{(j)}(x)=\Psi_k^{(k)}(x)C^{(kj)}+H_j^{(k)}(x), 
\end{equation}
where $C^{(kj)}$ is an $n_k\times n_j$ constant matrix and 
$H_j^{(k)}(x)$ is an $n\times n_j$ solution matrix which is 
holomorphic around $x=t_k$. 
Since $\gamma_k\!\cdot\!\Psi^{(k)}_k(x)=\Psi^{(k)}_k(x)e(A_{kk})$, 
we have 
\begin{equation}
\gamma_k\!\cdot\!\Psi_j^{(j)}(x)=\Psi^{(k)}_k(x) e(A_{kk})C^{(kj)}+H^{(k)}_j(x), 
\end{equation}
and hence
\begin{equation}
\gamma_k\!\cdot\!\Psi_j^{(j)}(x)=\Psi_k^{(k)}(x)(e(A_{kk})-1)C^{(kj)}+\Psi_{j}^{(j)}(x)
\end{equation}
eliminating $H^{(k)}_j(x)$ by \eqref{eq:Ckj}. 
To summarize, the monodromy matrices for the 
canonical solution matrix $\Psi(x)$ are given as 
\begin{equation}\label{eq:Mk}
M_k=
\begin{pmatrix}
1 & & &  &\\[-4pt]
&\ddots & &  &\\
(e(A_{kk})-1)C^{(k1)}&\dots &e(A_{kk}) & \ldots
 &(e(A_{kk})-1)C^{(kr)} \\
 & & &\ddots&\\[-4pt]
 & & &  &1
\end{pmatrix}\quad(k=1,\ldots,r)
\end{equation}
in terms of the connection coefficients as in \eqref{eq:Ckj}. 
We remark that, in order to compute 
the monodromy matrices of $\Psi(x)$, we need not explicitly 
determine the holomorphic 
part $H^{(k)}_j(x)$ of analytic continuation.  
 
\section{Okubo systems of types ${\rm I}$, ${\rm I}^*$, ${\rm II}$ 
and ${\rm III}$}
\subsection{Canonical forms of the systems  of types  
${\rm I}$, ${\rm I}^*$, ${\rm II}$ and ${\rm III}$}
Yokoyama's list is a class of rigid irreducible Okubo systems 
such that 
each diagonal block
$A_{ii}\in{\rm Mat}(n_i;\mathbb{C})$ $(i=1,\ldots,r)$ of $A$ 
has $n_i$ mutually distinct eigenvalues 
and that the matrix 
$A\in{\rm Mat}(n;\mathbb{C})$ is diagonalizable. 
In this class we can assume that 
the diagonal blocks $A_{ii}$ ($i=1,\ldots,r$) 
are diagonal matrices with mutually distinct entries. 
We assume that $A$ is diagonalized as
\begin{equation}
A\sim{\rm diag}(\rho_1I_{m_1},\ldots,\rho_{q}I_{m_q})\quad (m_1\geq\cdots \geq m_q)
\end{equation} 
with mutually distinct $\rho_1,\ldots,\rho_q\in\mathbb{C}$. 
Then the 
Okubo systems in Yokoyama's list 
are characterized  
by the following eight pairs $(n_1,\ldots,n_r)$, 
$(m_1,\ldots,m_q)$ of partitions of $n$ respectively: 
\begin{equation}
\begin{array}{clcl}
({\rm I})_n:
&
\begin{cases}
(n_1,n_2)=(n-1,1)\\
(m_1,\ldots,m_n)=(1,\ldots,1)
\end{cases}\quad 
&
({\rm I}^*)_n:
&
\begin{cases}
(n_1,\ldots,n_n)=(1,\ldots,1)\\
(m_1,m_2)=(n-1,1)
\end{cases}\quad 
\\[16pt]
({\rm II})_{2n}:
&
\begin{cases}
(n_1,n_2)=(n,n)\\
(m_1,m_2,m_3)=(n,n-1,1)
\end{cases}\quad 
&
({\rm II^\ast})_{2n}:
&
\begin{cases}
(n_1,n_2,n_3)=(n,n-1,1)\\
(m_1,m_2)=(n,n)
\end{cases}\quad 
\\[16pt]
({\rm III})_{2n+1}:
&
\begin{cases}
(n_1,n_2)=(n+1,n)\\
(m_1,m_2,m_3)=(n,n,1)
\end{cases}
&
({\rm III^\ast})_{2n+1}:
&
\begin{cases}
(n_1,n_2,n_3)=(n,n,1)\\
(m_1,m_2)=(n+1,n)
\end{cases}
\\[16pt]
({\rm IV})_{6}:
&
\begin{cases}
(n_1,n_2)=(4,2)\\
(m_1,m_2,m_3)=(2,2,2)
\end{cases}\quad 
&
({\rm IV^\ast})_{6}:
&
\begin{cases}
(n_1,n_2,n_3)=(2,2,2)\\
(m_1,m_2)=(4,2)
\end{cases} 
\end{array}
\end{equation}
In this paper
we deal with 
the four types 
$({\rm  I})_{n}$, 
$({\rm  I^\ast})_{n}$, 
$({\rm  II})_{2n}$, 
$({\rm  IIII})_{2n+1}$
among these eight types, 

Canonical forms of the Okubo systems in Yokoyama's list are determined by 
the work of Haraoka \cite{YHEq}.   
We first specify explicit canonical forms of the Okubo systems 
of types $({\rm I})_n$, $({\rm I}^*)_n$, $({\rm II})_{2n}$,\, $({\rm III})_{2n+1}$ 
$(n=2,3,\ldots)$ 
as in \cite{YHEq}, 
which we will use throughout this paper. 

\par\medskip
\noindent{\bf Case ${\rm I}$}:
We express the Okubo system of type $({\rm I})_n$ in the form
\begin{equation}\label{eq:OkuboA}
(x-T)\frac{d}{dx}Y=AY,\quad 
A=
\begin{pmatrix}
\alpha & K \\
 L     & \beta
\end{pmatrix}
=
\begin{pmatrix}
\alpha_1 &&& K_1 \\
& \ddots & & \vdots\\
&&\alpha_{n-1} & K_{n-1}\\
 L_1& \ldots & L_{n-1}  & \alpha_n
\end{pmatrix}
\sim
{\rm diag}(\rho_1,\rho_2,\ldots,\rho_n)\, , 
\end{equation}
where $\alpha,\,\beta $ and $T$ are diagonal matrices 
defined by
\begin{equation}
T={\rm diag}(t_1I_{n-1},t_2),\quad 
\alpha={\rm diag}(\alpha_1,\cdots,\alpha_{n-1}),\ \  \beta=(\alpha_n). 
\end{equation}
The Fuchs relation in this case is  given by 
\begin{equation}
\sum_{i=1}^{n}\alpha_i=\sum_{i=1}^{n}\rho_i.  
\end{equation}
We assume that the parameters of $({\rm I})_n$
satisfy the following conditions: 
\begin{equation}
\begin{array}{ll}
\alpha_i-\alpha_j\notin\mathbb{Z}\quad(1\le i<j\le n-1),
\quad & \alpha_i\notin\mathbb{Z}\quad(1\le i\le n),
\\[4pt]
\rho_i-\rho_j\notin\mathbb{Z}\quad(1\le i<j\le n),
\quad&\rho_i\notin\mathbb{Z}\quad(1\le i\le n). 
\end{array}
\end{equation}
For the type $({\rm I})_n$, the canonical form can be taken  as follows
\begin{Theorem}[\cite{YHEq}]\label{eq:cI}
Canonical form of the 
Okubo system of type $({\rm I})_{n}$ is 
given by the matrices $K=(K_{i})_{i}$ and $L=(L_{j})_{j}$ with the following entries respectively $:$ 
\begin{equation}\label{eq:CI}
K_i=1,\qquad L_j=-\frac{\prod_{k=1}^n(\alpha_j-\rho_k)}
{\prod_{k\neq j\,1\leq k\leq n-1}(\alpha_j-\alpha_k)},
\quad (i,j=1,\ldots,n-1)
\end{equation}
\end{Theorem}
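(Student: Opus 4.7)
The plan is to exploit the residual gauge freedom of the Okubo form, and then read off the entries of $L$ from the spectral condition $A \sim {\rm diag}(\rho_1,\ldots,\rho_n)$.

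First, I would observe that any diagonal matrix $D=\mathrm{diag}(d_1,\ldots,d_{n-1},1)$ commutes with $T={\rm diag}(t_1 I_{n-1},t_2)$, so the change of dependent variable $Y=DZ$ preserves the Okubo form and conjugates $A\mapsto D^{-1}AD$. The diagonal blocks $\alpha,\beta$ are left fixed, while $K_i\mapsto d_i^{-1}K_i$ and $L_j\mapsto d_jL_j$. Provided each $K_i\neq 0$, we may therefore set $d_i=K_i$ and normalize $K_i=1$ for $i=1,\ldots,n-1$. Nonvanishing of every $K_i$ is a consequence of irreducibility: if some $K_i=0$ then the coordinate hyperplane with that $i$-th coordinate suppressed would be stable under $A$ and compatible with the block decomposition of $T$, contradicting the irreducibility of the Okubo system in Yokoyama's list. (This is the one point that requires care; it is the main small obstacle.)

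Second, with $K_i=1$, I would compute the characteristic polynomial of $A$ by the Schur complement expansion along the last row and column:
\begin{equation*}
\det(\lambda I_n-A)=\prod_{i=1}^{n-1}(\lambda-\alpha_i)\Bigl[(\lambda-\alpha_n)-\sum_{i=1}^{n-1}\frac{L_i}{\lambda-\alpha_i}\Bigr].
\end{equation*}
Since $A$ is similar to $\mathrm{diag}(\rho_1,\ldots,\rho_n)$, this equals $\prod_{k=1}^n(\lambda-\rho_k)$. Clearing denominators and substituting $\lambda=\alpha_j$ for each $j=1,\ldots,n-1$, only the $i=j$ term in the sum contributes, giving
\begin{equation*}
-L_j\prod_{\substack{1\le k\le n-1\\ k\neq j}}(\alpha_j-\alpha_k)=\prod_{k=1}^n(\alpha_j-\rho_k),
\end{equation*}
which is exactly the stated formula \eqref{eq:CI} for $L_j$. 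The nondegeneracy assumption $\alpha_i-\alpha_j\notin\mathbb{Z}$ for $1\le i<j\le n-1$ in particular makes the denominator nonzero, so $L_j$ is well defined.

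Finally, I would verify the converse: with $L_j$ defined by the formula above, the characteristic polynomial of $A$ equals $\prod_{k=1}^n(\lambda-\rho_k)$, and the assumption $\rho_i-\rho_j\notin\mathbb{Z}$ forces the $\rho_i$ to be pairwise distinct, so $A$ is automatically diagonalizable with the prescribed spectrum. This completes the identification of the canonical form. The argument is essentially linear-algebraic; the only substantive step beyond block-matrix manipulation is the appeal to irreducibility to secure $K_i\neq 0$.
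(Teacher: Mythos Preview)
Your argument is correct and self-contained. One small point in the converse direction: matching the characteristic polynomial at the $n-1$ values $\lambda=\alpha_1,\ldots,\alpha_{n-1}$ determines the difference of the two monic degree-$n$ polynomials only up to a scalar multiple of $\prod_{i=1}^{n-1}(\lambda-\alpha_i)$; you should remark that comparing the $\lambda^{n-1}$ coefficients (i.e.\ traces) and invoking the Fuchs relation $\sum_i\alpha_i=\sum_k\rho_k$ kills that scalar. With that addition the proof is complete.

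Your route is genuinely different from what the paper does. The paper does not prove Theorem~\ref{eq:cI} directly; it quotes the canonical form from Haraoka \cite{YHEq}, and later in Section~5.1 verifies inductively that the Katz operation ${\rm add}_{(\rho,0)}\circ{\rm mc}_{-c-\rho}\circ{\rm add}_{(c,0)}$ carries the canonical form of $({\rm I})_n$ to that of $({\rm I})_{n+1}$ (using Lemma~\ref{lem:mcadd} and the rank-one decomposition of Lemma~\ref{lem:xietaEqI}). That inductive verification is what the paper actually needs, since its main goal is to track connection coefficients through the Katz operations. Your direct approach---diagonal gauge to normalize $K$, then Schur complement to read off $L$ from the spectrum---is more elementary and stands on its own, but it does not by itself exhibit the compatibility with middle convolution that drives the rest of the paper.
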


\par\medskip
\noindent{\bf Case ${\rm I}^*$}:
The Okubo system of type $({\rm I}^*)_n$ is expressed in the form
\begin{equation}
\begin{pmatrix}
x-t_1&&\\
&\ddots&\\
&&x-t_n
\end{pmatrix}
\frac{d}{dx}Y=\begin{pmatrix}
\alpha_1&a_{12}&\cdots &a_{1n}\\
a_{21}&\alpha_2&\cdots&a_{2n}\\
\vdots&&\ddots&\vdots\\
a_{n1}&a_{n2}&\cdots&\alpha_n
\end{pmatrix}Y
\sim{\rm diag}(\rho_1I_{n-1},\rho_2).
\end{equation}
Then the Fuchs relation is given by
\begin{equation}
\sum_{k=1}^n\alpha_k=(n-1)\rho_1+\rho_2.
\end{equation}
We assume that the parameters of $({\rm I}^*)_n$ satisfy the following conditions:
\begin{equation}
\alpha_i\notin \mathbb{Z}\quad (1\leq i\leq n),\quad \rho_1-\rho_2\notin  \mathbb{Z},
\quad \rho_1,\,\rho_2\notin  \mathbb{Z},
\end{equation}
The canonical forms of the type $({\rm I}^*)_n$ is given by following form:
\begin{Theorem}[\cite{YHEq}]
Canonical form of the 
Okubo system of type $({\rm I}^*)_{n}$ is 
given by the components $a_{ij}$ with the following entries respectively $:$ 
\begin{equation}\label{eq:CI*}
a_{ij}=\alpha_j-\rho_1
\quad (i\neq j,\,\,i,j=1,\ldots,n)
\end{equation}
\end{Theorem}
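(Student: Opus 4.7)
The plan is to verify that the matrix $A$ prescribed in the statement realizes the canonical form of type $({\rm I}^*)_n$, which amounts to checking two things: (i) its diagonal entries $A_{ii}=\alpha_i$ supply the required local exponents at $x=t_i$, which is built into the definition and needs no argument; and (ii) as a global constant matrix, $A$ is diagonalizable with spectrum $\{\rho_1^{(n-1)},\rho_2\}$. The whole content of the proof therefore lies in checking the spectral condition (ii).

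The key observation is the hidden rank-one structure of the prescription. Setting $\boldsymbol{1}_n=(1,\ldots,1)^T$ and $v=(\alpha_1-\rho_1,\ldots,\alpha_n-\rho_1)^T$, the matrix $\boldsymbol{1}_n v^T$ has $(i,j)$-entry equal to $\alpha_j-\rho_1$ for every pair $(i,j)$. Adding $\rho_1 I_n$ modifies only the diagonal, turning each $\alpha_j-\rho_1$ on the diagonal into $\alpha_j$ while leaving the off-diagonal entries intact. Hence
\[
A \;=\; \rho_1 I_n + \boldsymbol{1}_n v^T.
\]

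With this decomposition the spectrum reads itself off. The rank-one matrix $\boldsymbol{1}_n v^T$ has kernel $v^{\perp}$ of dimension $n-1$, so $0$ is an eigenvalue with geometric (and algebraic) multiplicity $n-1$; the remaining eigenvalue equals its trace, namely $v^T \boldsymbol{1}_n = \sum_{j=1}^n(\alpha_j-\rho_1)$. Substituting the Fuchs relation $\sum_j \alpha_j=(n-1)\rho_1+\rho_2$ gives
\[
\operatorname{tr}(\boldsymbol{1}_n v^T) \;=\; \rho_2-\rho_1,
\]
which is nonzero by the standing assumption $\rho_1-\rho_2\notin\mathbb{Z}$. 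Thus $\boldsymbol{1}_n v^T$ is diagonalizable with eigenvalues $0^{(n-1)}$ and $\rho_2-\rho_1$, and translation by $\rho_1 I_n$ yields the spectrum $\{\rho_1^{(n-1)},\rho_2\}$ for the diagonalizable matrix $A$, establishing $A\sim\operatorname{diag}(\rho_1 I_{n-1},\rho_2)$.

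No serious obstacle arises: the verification is immediate once the rank-one decomposition is recognized, and the only nontrivial input is the Fuchs relation, which fixes the trace to the required value $\rho_2-\rho_1$. In this sense the canonical prescription $a_{ij}=\alpha_j-\rho_1$ for $i\neq j$ is the simplest possible realization of the spectral constraint: it forces $A-\rho_1 I_n$ to be rank one, so that the single non-$\rho_1$ eigenvalue is automatically pinned down by Fuchs to be $\rho_2$.
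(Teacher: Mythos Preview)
Your verification is correct: the rank-one decomposition $A=\rho_1 I_n+\boldsymbol{1}_n v^T$ with $v=(\alpha_j-\rho_1)_j$ immediately yields the spectrum $\{\rho_1^{(n-1)},\rho_2\}$ via the Fuchs relation, and diagonalizability follows from $v^T\boldsymbol{1}_n=\rho_2-\rho_1\neq 0$.

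The paper itself does not prove this theorem at the point where it is stated; it is quoted from Haraoka \cite{YHEq}. However, in Section~5.1 the paper effectively rederives this canonical form by a different route, constructing it as the convolution ${\rm mc}_\mu$ of the rank-one system $\frac{dy}{dx}=\sum_k \frac{\alpha_k}{x-t_k}\,y$: the resulting coefficient matrix has diagonal entries $\alpha_i+\mu$ and off-diagonal entries $\alpha_j$, which after the reparametrization $\alpha_i^{(n)}=\alpha_i+\mu$, $\rho_1^{(n)}=\mu$ is exactly the prescription $a_{ij}=\alpha_j^{(n)}-\rho_1^{(n)}$. Your direct spectral check is more elementary and self-contained for the bare statement; the paper's construction via middle convolution is not aimed at proving the theorem per se but at embedding $({\rm I}^*)_n$ into the Katz-operation framework that drives the computation of connection coefficients later on.
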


\par\medskip
\noindent{\bf Cases ${\rm II}$ and ${\rm III}$}:
We express the Okubo systems of type $({\rm II})_{2n}$ and 
$({\rm III})_{2n+1}$ in the form 
\begin{equation}\label{eq:OkuboA}
(x-T)\frac{d}{dx}Y=AY,\quad 
A=
\begin{pmatrix}
\alpha & K \\
 L     & \beta
\end{pmatrix}\sim
{\rm diag}(\rho_1I_n,\rho_2I_{m-1},\rho_3)\ \  (m=n, n+1),
\end{equation}
where $\alpha,\,\beta $ and $T$ are diagonal matrices 
defined by
\begin{equation}
\begin{split}
({\rm II})_{2n}:\ \ 
&T={\rm diag}(t_1I_n,t_2I_n),\quad 
\alpha={\rm diag}(\alpha_1,\cdots,\alpha_n),\quad  
\beta={\rm diag}(\beta_1,\cdots,\beta_n),\\
({\rm III})_{2n+1} :\ \ 
&T={\rm diag}(t_1I_{n+1},t_2I_{n}),\quad 
\alpha={\rm diag}(\alpha_1,\cdots,\alpha_{n+1}),\quad  
\beta={\rm diag}(\beta_1,\cdots,\beta_{n}),
\end{split}
\end{equation}
respectively. 
Note that the Fuchs relations in these cases are  given by 
\begin{equation}
\sum_{i=1}^{m}\alpha_i+\sum_{i=1}^{n}\beta_i=n\rho_1+(m-1)\rho_2+\rho_3.  
\end{equation}
We always assume that the parameters 
satisfy the following conditions: 
\begin{equation}
\begin{array}{ll}
\alpha_i-\alpha_j\notin\mathbb{Z}\quad(1\le i<j\le m),
\quad & \alpha_i\notin\mathbb{Z}\quad(1\le i\le m),
\\[4pt]
\beta_i-\beta_j\notin\mathbb{Z}\quad(1\le i<j\le n),
\quad &\beta_i\notin\mathbb{Z}\quad(1\le i\le n),
\\[4pt]
\rho_i-\rho_j\notin\mathbb{Z}\quad(1\le i<j\le 3),
\quad&\rho_i\notin\mathbb{Z}\quad(1\le i\le 3). 
\end{array}
\end{equation}
In the cases of type $({\rm II})_{2n}$ and $({\rm III})_{2n+1}$, 
the following canonical forms of the Okubo systems are proposed 
by Haraoka \cite{YHEq}. 
\begin{Theorem}[\cite{YHEq}]
Canonical forms of the 
Okubo systems of type $({\rm II})_{2n}$ $(m=n)$ and 
$({\rm III})_{2n+1}$ $(m=n+1)$ are 
given by the matrices $K=(K_{ij})_{ij}$ and $L=(L_{ij})_{ij}$ with the following entries respectively $:$ 
\begin{eqnarray}\label{eq:CII}
({\rm II})_{2n}:&&
\begin{array}{rll}
K_{ij}&=(\beta_j-\rho_1)\ds
\frac{\prod_{k\neq i,\, 1\leq k\leq n}(\alpha_k+\beta_j-\rho_1-\rho_2)}
{\prod_{k\neq j,\,1\leq k\leq n}(\beta_j-\beta_k)}
\quad(1\le i,j\le n)
\\
L_{ij}&=(\alpha_j-\rho_1)\ds
\frac{\prod_{k\neq i,\, 1\leq k\leq n}(\alpha_j+\beta_k-\rho_1-\rho_2)}
{\prod_{k\neq j,\, 1\leq k\leq n}(\alpha_j-\alpha_k)}
\quad(1\le i,j\le n),
\end{array}
\\[4pt]
\label{eq:CIII}
({\rm III})_{2n+1}:&&
\begin{array}{rll}
K_{ij}&=\ds
\frac{\prod_{k\neq i,\, 1\leq k\leq n+1}(\alpha_k+\beta_j-\rho_1-\rho_2)}
{\prod_{k\neq j,\, 1\leq k\leq n}(\beta_j-\beta_k)}\ \
&(1\le i\le n+1;\ 1\le j\le n)
\\[16pt]
L_{ij}&=(\alpha_j-\rho_1)(\alpha_j-\rho_2)\ds
\frac{\prod_{k\neq i,\, 1\leq k\leq n}(\alpha_j+\beta_k-\rho_1-\rho_2)}
{\prod_{k\neq j,\, 1\leq k\leq n+1}(\alpha_j-\alpha_k)}\ \
&(1\le i\le n;\ 1\le j\le n+1). 
\end{array}
\end{eqnarray}
\end{Theorem}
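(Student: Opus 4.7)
The plan is to prove the canonical forms inductively in $n$, by exploiting the middle-convolution-with-additions construction developed in Section 4 of the paper. Since types $(\mathrm{II})_{2n}$ and $(\mathrm{III})_{2n+1}$ are rigid, Katz's theorem guarantees that each such Okubo system is obtained from a rank-one local system by a finite iteration of Katz operations, and Section 4 shows that a single composite $\mathrm{add}\circ\mathrm{mc}_{-\rho-c}\circ\mathrm{add}$ sends an Okubo system to an Okubo system whose residue matrix is given by an explicit formula. I would first identify, at each stage of the iteration, which Okubo type in Yokoyama's list is the immediate ancestor of $(\mathrm{II})_{2n}$ or $(\mathrm{III})_{2n+1}$ under such an operation, and verify the lowest-rank base case (e.g.\ $n=1$) by a direct calculation.

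In the inductive step, starting from a canonical form at level $n-1$ with entries $K_{ij}^{(n-1)}, L_{ij}^{(n-1)}$ given by the claimed formulas, I would apply the $\mathrm{add}\circ\mathrm{mc}\circ\mathrm{add}$ operation. The additions shift $\alpha_k, \beta_k$ by explicit constants, and the middle convolution enlarges the rank, inserting new parameters associated with the fresh indices. The output sits in the same similarity class as the target canonical form, so one conjugates by an explicit diagonal gauge to normalize. Tracking these shifts through the convolution, $K$-reduction, and $L$-reduction formulas of Section 3, one finds that each $K_{ij}^{(n)}$ differs from $K_{ij}^{(n-1)}$ by exactly one additional factor $(\alpha_n + \beta_j - \rho_1 - \rho_2)$ in the numerator and one factor $(\beta_j - \beta_n)$ in the denominator, and similarly for $L_{ij}^{(n)}$, which is precisely what the claimed formulas require.

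The main obstacle is the \emph{new} row or column of the enlarged matrix, corresponding to the just-inserted index $i=n$ or $j=n$, since those entries are produced directly by the middle convolution rather than inherited from the inductive hypothesis. I expect this to reduce to a Lagrange-interpolation identity: a polynomial of degree at most $n-1$ is determined by its values at $n$ nodes (here the $\alpha_k$ or $\beta_k$), and the Fuchs relation combined with the spectrum $(\rho_1, \rho_2, \rho_3)$ fixes those values. Closing this identity, together with the bookkeeping of which diagonal gauge must be applied after the middle convolution, is the technical heart of the argument.

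A complementary and possibly shorter route bypasses the induction and uses rigidity directly: since the conjugacy class of $A$ is determined by the eigenvalues of $\alpha$, $\beta$, and $A$, it suffices to verify that the explicit $A$ has spectrum $\mathrm{diag}(\rho_1 I_n, \rho_2 I_{m-1}, \rho_3)$. Using the block identity $\det(xI-A) = \det(xI-\alpha)\det(xI-\beta - L(xI-\alpha)^{-1}K)$, substituting the prescribed $K, L$, and applying the same Lagrange identity, one reduces to a polynomial factorization that can be verified by finitely many evaluations. This polynomial identity is in essence the same computational core required for the inductive approach, so the two routes converge on the same key calculation.
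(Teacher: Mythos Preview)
The theorem is quoted from Haraoka \cite{YHEq} and the paper does not give a self-contained proof; what the paper does supply, in Section~5.1, is exactly your first route. It verifies inductively that the operation ${\rm add}_{(0,\rho_2)}\circ{\rm mc}\circ{\rm add}_{(0,b)}$ (resp.\ with additions at $t_1$) carries the canonical $A$ of $(\mathrm{III})_{2n-1}$ to that of $(\mathrm{II})_{2n}$ (resp.\ $(\mathrm{II})_{2n}$ to $(\mathrm{III})_{2n+1}$), using the explicit $A^{mc}$ formula of Lemma~4.2. The ``main obstacle'' you flag---the new row/column produced by the middle convolution---is handled by Lemma~5.2, which asserts that $\beta-\rho_2-K(\alpha-\rho_2)^{-1}L$ (resp.\ $\alpha-\rho_2-L(\beta-\rho_2)^{-1}K$) has rank~1 and records the explicit $\xi,\eta$; the paper remarks that this is precisely the Cauchy-matrix identity used in \cite{YHEq}, which is the Lagrange-interpolation computation you anticipated. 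So your inductive plan and the paper's construction coincide. Your alternative ``rigidity'' route (compute $\det(xI-A)$ directly via the Schur complement and check the spectrum) is not what the paper does, but as you note it reduces to the same polynomial identity, so it is a legitimate shortcut rather than a genuinely different argument.
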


\noindent
To be more precise, the canonical form of Haraoka \cite{YHEq} 
is the conjugation of our $A$ by the diagonal matrix
\begin{equation}
{\rm diag}(a_1^{-1},\ldots,a_m^{-1},b_1^{-1},\ldots,b_n^{-1});
\quad
a_i=\prod_{k=1 \atop k\ne i}^{m}(\alpha_i-\alpha_k),\quad
b_i=\prod_{k=1 \atop k\ne i}^{n}(\beta_i-\beta_k).
\end{equation}

\subsection{Main theorems}
For each Okubo system, the monodromy matrices for 
the canonical solution matrix are expressed in the form 
\eqref{eq:Mk} by using the connection coefficients.
Our main results are the explicit formulas for the connection coefficients
and the monodromy matrices for the canonical solution matrices 
of types ${\rm I}$, ${\rm I}^*$, ${\rm II}$ and ${\rm III}$.
In what follows, we use the notation 
$(t_i-t_j)^{\alpha}=\exp(\alpha\log(t_i-t_j))$ 
$(\alpha\in\mathbb{C})$ with the convention
of arguments such that 
$\theta_j-\pi<{\rm arg}(t_i-t_j)<\theta_j$ 
for $i<j$, and 
$\theta_j<{\rm arg}(t_i-t_j)<\theta_j+\pi$ 
for $i>j$, using $\theta_j=\arg(p_0-t_j)$ as specified in Section 1.1.
\par\medskip
\noindent{\bf Case ${\rm I}$}:\,
For the Okubo system of type $({\rm I})_n$, the canonical solution matrix
\begin{equation}
\Psi(x)=(\psi_1(x),\ldots,\psi_{n-1}(x),\psi_{n}(x))
\in {\rm Mat}(n;\mathcal{O}(\widetilde{\mathcal{D}}))
\end{equation}
is a multivalued holomorphic solution matrix on 
$\mathcal{D}=\mathbb{P}^{1}\backslash\{t_1,t_2,\infty\}$ 
characterized by the following conditions: 
\begin{equation}
\begin{split}
\psi_j(x)&=(x-t_1)^{\alpha_j}({\bs e}_j+O(x-t_1))\quad (j=1,\ldots, n)
\end{split}
\end{equation}
around $x=t_1$, and 
\begin{equation}
\begin{split}
\psi_{n}(x)&=(x-t_2)^{\alpha_n}({\bs e}_{n}+O(x-t_2))\quad 
\end{split}
\end{equation}
around $x=t_2$, 
where ${\bs e}_j=(\delta_{ij})_{i=1}^{n}$ denotes the $j$-th unit column vector. 

According to the representation \eqref{eq:Mk} of the previous section, 
the monodromy matrices $M_1,M_2$ are expressed in the form 
\begin{eqnarray}\label{eq:MI}
M_1&=&\left(
\begin{array}{cc}
e_1 & (e_1-1)C \\
0    & 1
\end{array}\right),
\quad 
e_1={\rm diag}(e(\alpha_1),\ldots,e(\alpha_{n-1})),
\nonumber\\
M_2&=&
\left(
\begin{array}{cc}
 I_{n-1} & \,0 \\
 (e_2-1)D & e_2
\end{array}
\right),\quad
e_2=e(\alpha_n), 
\end{eqnarray}
in terms of the connection matrices $C=C^{(12)}$, 
$D=C^{(21)}$ in \eqref{eq:Ckj}.  
These connection matrices $C=(C_{i})_{i}$ and $D=(D_{j})_{j}$ are defined 
as 
\begin{equation}
\psi_{n}(x)=\sum_{i=1}^{n-1}\psi_{i}(x) C_{i}+ h_{n}(x),
\quad h_{n}(x)\in\mathcal{O}_{t_1}
\end{equation}
around $x=t_1$ and 
\begin{equation}
\psi_{j}(x)=\psi_{n}(x) D_{j}+ h_j(x),
\quad h_j(x)\in\mathcal{O}_{t_2}\quad(j=1,\ldots,n-1)
\end{equation}
around $x=t_2$. 
As to the monodromy around $x=\infty$, we remark that 
\begin{equation}
\quad
M_\infty^{-1}=M_1M_2
\sim {\rm diag}(f_1,f_2,\ldots,f_n), 
\end{equation}
where $f_i=e(\rho_i)\quad(i=1,2,\ldots,n)$. 
The connection matrices $C$ and $D$ are given by the following theorem:

\begin{Theorem}\label{mainI}
The monodromy matrices $M_1$, $M_2$ 
for the canonical solution matrix $\Psi(x)$ of type $({\rm I})_n$  
is expressed as \eqref{eq:MI} in terms of the connection matrices 
$C=(C_{i})_{i}$ and $D=(D_{j})_{j}$ determined as follows $:$ 
\begin{equation}
\begin{split}
C_i&=(-1)^{n}\,e(\tfrac{1}{2}(\rho_2-\alpha_i-\alpha_n))
\frac{(t_1-t_2)^{\rho_2-\alpha_i}}{(t_2-t_1)^{\rho_2-\alpha_n}}
\Gamma(-\alpha_i)\Gamma(\alpha_n+1)
\frac{\prod_{k\neq i,\,1\leq k\leq n-1}\Gamma(1+\alpha_{k}-\alpha_i)}{\prod_{k=1}^n\Gamma(1+\rho_{k}-\alpha_i)},\\
D_{j}&=
e(\tfrac{1}{2}(-\rho_2+\alpha_j+\alpha_n))
\frac{(t_2-t_1)^{\rho_2-\alpha_n}}{(t_1-t_2)^{\rho_2-\alpha_j}}
\Gamma(1+\alpha_j)\Gamma(-\alpha_n)
\frac{\prod_{k\neq j,\,1\leq k\leq n-1}\Gamma(\alpha_j-\alpha_{k})}
{\prod_{k=1}^n\Gamma(\alpha_j-\rho_{k})}.
\end{split}
\end{equation}
\end{Theorem}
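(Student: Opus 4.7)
My plan is to prove Theorem \ref{mainI} by induction on $n$, exploiting the inductive construction of the Okubo system of type $(\mathrm{I})_n$ from the rank-one case via iterated application of the composite Katz operation
\[
\mathrm{add}_{(0,\rho)} \circ \mathrm{mc}_{-\rho-c} \circ \mathrm{add}_{(0,c)}
\]
introduced in Section 4. At each step, Theorem \ref{connection} transports the connection coefficients of the canonical solution matrix of the smaller system to those of the enlarged system in a controlled way.

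First I would handle the base case $n = 2$: the system has rank $2$ with one singular solution at each of $t_1, t_2$, and it is obtained from a rank-one system by a single application of the composite operation. A direct evaluation of the integral representation arising in Theorem \ref{connection} yields the $n=2$ instance of the formulas; this step in particular pins down the branch conventions for $(t_1-t_2)^{\bullet}$ and $(t_2-t_1)^{\bullet}$ consistent with the arguments $\theta_1, \theta_2$ prescribed in Section~1.1.

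For the inductive step, I would assume the formula holds at rank $n-1$ and apply the composite operation with parameters chosen so that an additional exponent $\alpha_{n-1}$ is adjoined at $t_1$, sending the type $(\mathrm{I})_{n-1}$ system to the type $(\mathrm{I})_n$ system. Theorem \ref{connection} then expresses each new $C_i$ as the previous coefficient multiplied by explicit Gamma factors and a phase coming from the Katz step; iterating and collecting terms produces the first of the two formulas. The coefficients $D_j$, however, are \emph{not} delivered directly by this construction, since the inductive procedure treats $t_1$ and $t_2$ asymmetrically. To get them, I invoke the symmetry remark in the introduction: swapping the roles of $\alpha_j$ and $\alpha_n$ at the level of characteristic exponents is realized by the adjoint action of an explicit constant matrix, and under this conjugation a $D$-coefficient of the original system is identified with a $C$-type coefficient of the relabeled system, to which the already-proved formula applies. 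Substituting yields the expression for $D_j$.

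The main obstacle will be the careful tracking of branches and phases through the middle-convolution step. Each convolution is represented by a Euler-type integral along the segment from $t_1$ to $t_2$, whose evaluation by a Beta integral is responsible for the half-angle exponentials of the form $e(\tfrac{1}{2}(\rho_2-\alpha_i-\alpha_n))$ as well as the asymmetric factors $(t_1-t_2)^{\rho_2-\alpha_i}$ versus $(t_2-t_1)^{\rho_2-\alpha_n}$; reconciling these across successive iterations is where most of the bookkeeping lives. A secondary difficulty is verifying that after all the iterations the products of Gamma factors telescope precisely into the compact forms $\prod_{k\neq i}\Gamma(1+\alpha_k-\alpha_i)$ and $\prod_k \Gamma(1+\rho_k-\alpha_i)$, rather than into the messier partial forms that arise at intermediate stages, which requires explicit bookkeeping of how the parameters $\rho_\bullet$ evolve at each Katz step.
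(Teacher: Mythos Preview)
Your overall architecture (base case at $n=2$, induction via the composite Katz operation, then a symmetry argument) matches the paper, but two of the details are off in ways that matter.

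First, the additions must be placed at $t_1$, not at $t_2$: the paper uses
\[
(\mathrm{I})_{n+1}=\mathrm{add}_{(\rho,0)}\circ\mathrm{mc}_{-c-\rho}\circ\mathrm{add}_{(c,0)}(\mathrm{I})_n,
\]
i.e.\ $k=1$ in \eqref{mcadd}. With your placement $(0,c)$, $(0,\rho)$ at $t_2$ the resulting block sizes would be $(n-1,n)$ rather than $(n,1)$, so you would not land on $(\mathrm{I})_{n+1}$ at all. Your verbal description (``an additional exponent is adjoined at $t_1$'') is the right intent; the displayed formula contradicts it.

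Second, and more substantively, your claim that the $D_j$ are \emph{not} delivered by the construction is incorrect. Theorem~\ref{connection} provides recurrences for \emph{both} $C^{mc}_{(k1)j}$ and $C^{mc}_{i(k1)}$; with $k=1$ and the only other block being $j=i=2$, these are precisely the new $C_i^{(n+1)}$ and the new $D_j^{(n+1)}$ in terms of $C_i^{(n)}$ and $D_j^{(n)}$, for every old index $i,j\in\{1,\ldots,n-1\}$. Iterating from $n=2$ therefore gives closed formulas for $C_1^{(n)}$ \emph{and} $D_1^{(n)}$ simultaneously. What the recurrence does \emph{not} give in product-of-Gamma form is the coefficient attached to the freshly created exponent at $t_1$ (the $(k2)$ block in the notation of Section~4); that is where symmetry is actually needed. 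The correct symmetry is the permutation $\sigma^{\alpha}_{1i}$ exchanging $\alpha_1$ and $\alpha_i$ among the exponents at $t_1$, which is realized by conjugation by the permutation matrix $S_{1i}$ commuting with $T$, and which yields $C_i^{(n)}=\sigma^{\alpha}_{1i}C_1^{(n)}$ and $D_i^{(n)}=\sigma^{\alpha}_{1i}D_1^{(n)}$.

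By contrast, the symmetry you propose---swapping $\alpha_j$ (an exponent at $t_1$) with $\alpha_n$ (the exponent at $t_2$)---cannot be implemented by ${\rm Ad}(G)$ for any constant $G$: preserving the Okubo form forces $GT=TG$, hence $G$ is block diagonal and cannot move an eigenvalue of $A_{11}$ to $A_{22}$. So the mechanism you intend to use to recover $D_j$ from $C_i$ does not exist, and in any case is unnecessary once you notice that \eqref{Ci(k1)} and \eqref{C(k1)j} already propagate both families of connection coefficients.
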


\noindent{\bf Case ${\rm I}^*$}:
For the Okubo system of type $({\rm I}^*)_n$, the canonical solution matrix
\begin{equation}
\Psi(x)=(\psi_1(x),\psi_2(x),\ldots,\psi_{n}(x))
\in {\rm Mat}(n;\mathcal{O}(\widetilde{\mathcal{D}}))
\end{equation}
is a multivalued holomorphic solution matrix on 
$\mathcal{D}=\mathbb{P}^{1}\backslash\{t_1,t_2,\ldots,t_n\infty\}$ 
characterized by the following conditions: 
\begin{equation}
\begin{split}
\psi_j(x)&=(x-t_j)^{\alpha_j}({\bs e}_j+O(x-t_j))\quad (j=1,\ldots, n)
\end{split}
\end{equation}
around $x=t_j$.

The monodromy matrices $M_1,M_2,\ldots,M_n$ are expressed in the form 
\begin{equation}
M_i=\begin{pmatrix}
1&&O&&\\
(e_i-1)C_{i1}&\cdots&e_i&\cdots &(e_i-1)C_{in}\\
&&O&&1
\end{pmatrix}
\quad (i=1,\ldots,n).
\end{equation}
These connection matrices $C_{ij}$ are defined as 
\begin{equation}
\psi_{j}(x)=\psi_{i}(x) C_{ij}+ h_{ij}(x),
\quad h_{ij}(x)\in\mathcal{O}_{t_i}
\quad(j=1,\ldots,n).
\end{equation}

\begin{Theorem}\label{mainI*}
The monodromy matrices $M_1,M_2,\ldots,M_n$ 
for the canonical solution matrix $\Psi(x)$ of type $({\rm I}^\ast)_n$
is expressed as \eqref{eq:MI} in terms of the connection matrices 
$C_{ij}$ determined as follows $:$ 
\begin{equation}
C_{ij}=
\begin{cases}
&\ds e(\tfrac{-\rho_1}{2})
\frac{\prod_{k\neq i,\,1\leq k\leq n}(t_i-t_k)^{\alpha_k-\rho_1}}
{\prod_{k\neq j,\,1\leq k\leq n}(t_j-t_k)^{\alpha_k-\rho_1}}
\frac{\Gamma(-\alpha_i)\Gamma(\alpha_j+1)}
{\Gamma(\alpha_j-\rho_1)\Gamma(1+\rho_1-\alpha_i)}
\quad (i<j),\\[10pt]
&\ds e(\tfrac{\rho_1}{2})
\frac{\prod_{k\neq i\,1\leq k\leq n}(t_i-t_k)^{\alpha_k-\rho_1}}
{\prod_{k\neq j,\,1\leq k\leq n}(t_j-t_k)^{\alpha_k-\rho_1}}
\frac{\Gamma(-\alpha_i)\Gamma(\alpha_j+1)}
{\Gamma(\alpha_j-\rho_1)\Gamma(1+\rho_1-\alpha_i)}
\quad(i>j).
\end{cases}
\end{equation}
\end{Theorem}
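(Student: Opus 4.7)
The plan is to construct the type $({\rm I}^*)_n$ Okubo system by a finite iteration of the compound Katz operation ${\rm add}\circ{\rm mc}\circ{\rm add}$ formulated in Section 4, starting from a rank-one Schlesinger system, and to use Theorem \ref{connection} at each step to transport the connection coefficients. The symmetry of the canonical form under simultaneous permutation of the indices $1,\ldots,n$ then fills in the remaining cases. This matches the two-step scheme announced in the introduction: compute a first family of connection coefficients by repeated application of Theorem \ref{connection}, then recover the rest by symmetry.

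For the inductive construction I would start from the rank-one system at $t_1,\ldots,t_n,\infty$, whose solution is $\prod_{k=1}^n(x-t_k)^{a_k}$ and whose $1\times 1$ connection coefficients are all equal to $1$. Applying the compound operation ${\rm add}\circ{\rm mc}\circ{\rm add}$ repeatedly, with additions and convolution parameters chosen so that the local exponents are driven toward $\alpha_1,\ldots,\alpha_n$ at the finite points and $\rho_1$ (with multiplicity $n-1$), $\rho_2$ at infinity, formula \eqref{eq:mcadd} ensures that each intermediate system is again an Okubo system; by rigidity of type $({\rm I}^*)_n$, the terminal system must coincide with Haraoka's canonical form of Theorem 2.2 up to a constant diagonal gauge, which can be pinned down from the normalization $F^{(k)}(t_k)=I_n$ of the local canonical solutions.

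To obtain the first family, I would apply Theorem \ref{connection} at each stage. Because the starting connection coefficients are trivial, the output after collecting terms is an explicit product of Gamma-function ratios of the shape $\Gamma(-\alpha_i)\Gamma(\alpha_j+1)/\Gamma(\alpha_j-\rho_1)\Gamma(1+\rho_1-\alpha_i)$, power factors $\prod_{k\ne i}(t_i-t_k)^{\alpha_k-\rho_1}/\prod_{k\ne j}(t_j-t_k)^{\alpha_k-\rho_1}$, and a phase $e(-\rho_1/2)$ produced by the middle-convolution contour relative to the ordered configuration $t_1,\ldots,t_n$. This yields the formula for $C_{ij}$ in the case $i<j$. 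The remaining case $i>j$ comes from the evident symmetry of the canonical form: swapping the roles of $t_i$ and $t_j$ is realized by the adjoint action of a permutation matrix, and transports $C_{ji}$ to a connection coefficient of the permuted system that falls under the case already computed. Combining this with the argument convention $\theta_j-\pi<\arg(t_i-t_j)<\theta_j$ for $i<j$ versus $\theta_j<\arg(t_i-t_j)<\theta_j+\pi$ for $i>j$ fixed in Section 1.1 produces the single change of phase $e(-\rho_1/2)\mapsto e(\rho_1/2)$ that distinguishes the two lines of the theorem.

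The main obstacle will be the careful bookkeeping of branches and half-monodromy phases through the iterated middle-convolution integrals. Theorem \ref{connection} supplies the coefficients only with respect to the conventions of Section 1.1, and each application of the compound operation contributes a half-phase whose sign is dictated by the relative position of the singular points along the base path. Ensuring that these phases combine consistently throughout the iteration, and that the final accumulated phase is precisely the $e(\mp\rho_1/2)$ appearing in Theorem \ref{mainI*}, is the substantive part of the argument; once the phases are settled, the Gamma-function arithmetic reduces to standard cancellations in the products induced by each step.
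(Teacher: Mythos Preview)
Your proposal misidentifies the structure of the proof for type $({\rm I}^*)_n$, and the approach you outline has a genuine gap.

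The compound operation ${\rm add}\circ{\rm mc}\circ{\rm add}$ of Section~4 and the recurrence of Theorem~\ref{connection} are formulated for an \emph{Okubo} input: they require a canonical solution matrix $\Psi(x)$ and block data $A_{kk}$, $C_{ij}$ in the sense of Section~1. The rank-one Schlesinger system at $t_1,\ldots,t_n$ is not an Okubo system when $n>1$ (an Okubo system of type $(n_1,\ldots,n_r)$ has rank $n_1+\cdots+n_r\ge r$), so Theorem~\ref{connection} does not apply to it. Nor is there a natural inductive chain $({\rm I}^*)_2\to({\rm I}^*)_3\to\cdots$ through compound operations: middle convolution preserves the set of finite singularities, and $({\rm I}^*)_n$ has $n$ of them, so one cannot reach $({\rm I}^*)_n$ from $({\rm I}^*)_{n-1}$ by the machinery of Section~4. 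The ``repeated application'' you describe therefore has no well-defined first step.

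The paper's argument is both different and much shorter. As in Section~5.1, a \emph{single} bare middle convolution ${\rm mc}_\mu$ (not the compound operation) applied to the rank-one system $Y(x)=\prod_k(x-t_k)^{\alpha_k}$ already produces the canonical form \eqref{eq:CI*} of $({\rm I}^*)_n$, with $\rho_1=\mu$. Since the starting rank is one, the convolution $Z(x)=I^{\mu}Y(x)$ of \eqref{eq:defIY} is itself an $n\times n$ fundamental matrix of the target Okubo system, and the canonical solution matrix is $\Psi(x)=Z(x)R^{-1}$ for a diagonal $R=\mathrm{diag}(r_1,\ldots,r_n)$. The constants $r_i$ are read off directly from the local asymptotics of the Euler integral near $x=t_i$ (the computation preceding Lemma~\ref{lem:KerIY}), giving
\[
r_i=\prod_{k\ne i}(t_i-t_k)^{\alpha_k-\rho_1}\,\widetilde B(\alpha_i-\rho_1,\rho_1+1).
\]
The monodromy of $Z(x)$ is given by \eqref{mon:convolution} with the scalar data $M_k=e(\alpha_k-\rho_1)$, and comparing $RN_kR^{-1}$ with the Okubo form \eqref{eq:Mk} yields all $C_{ij}$ at once via
\[
C_{ij}=\frac{e(\rho_1)^{\delta(i>j)}\bigl(e(\alpha_j-\rho_1)-1\bigr)}{e(\alpha_i)-1}\,\frac{r_i}{r_j},
\]
from which the stated formula follows by the reflection formula for $\Gamma$. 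No iteration, no Theorem~\ref{connection}, and no separate symmetry step are needed: the two cases $i<j$ and $i>j$ differ only through the factor $e(\rho_1)^{\delta(i>j)}$ already present in the convolution monodromy \eqref{mon:convolution}.
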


\par\medskip
\noindent{\bf Cases ${\rm II}$ and ${\rm III}$}:
For the Okubo system type $({\rm II})_{2n}$ and $({\rm III})_{2n+1}$,   
the canonical solution matrix 
\begin{equation}
\Psi(x)=(\psi_1(x),\ldots,\psi_m(x),\psi_{m+1}(x),\ldots,
\psi_{m+n}(x))
\in {\rm Mat}(m+n;\mathcal{O}(\widetilde{\mathcal{D}}))
\end{equation}
is a multivalued holomorphic solution matrix on 
$\mathcal{D}=\mathbb{P}^{1}\backslash\{t_1,t_2,\infty\}$ 
characterized by the following conditions: 
\begin{equation}
\begin{split}
\psi_j(x)&=(x-t_1)^{\alpha_j}({\bs e}_j+O(x-t_1))\quad (j=1,\ldots, m)
\end{split}
\end{equation}
around $x=t_1$, and 
\begin{equation}
\begin{split}
\psi_{m+j}(x)&=(x-t_2)^{\beta_j}({\bs e}_{n+j}+O(x-t_2))\quad (j=1,\ldots, n)
\end{split}
\end{equation}
around $x=t_2$.

According to the representation \eqref{eq:Mk} of the previous section, 
the monodromy matrices $M_1,M_2$ are expressed in the form 
\begin{eqnarray}\label{eq:MCD}
M_1&=&\left(
\begin{array}{cc}
e_1 & (e_1-1)C \\
0    & I_n
\end{array}\right),
\quad 
e_1={\rm diag}(e(\alpha_1),\ldots,e(\alpha_m)),
\nonumber\\
M_2&=&
\left(
\begin{array}{cc}
 I_m & \,0 \\
 (e_2-1)D & e_2
\end{array}
\right),\quad
e_2={\rm diag}(e(\beta_1),\ldots,e(\beta_n)), 
\end{eqnarray}
in terms of the connection matrices $C=C^{(12)}$, 
$D=C^{(21)}$ in \eqref{eq:Ckj}.  
These connection matrices $C=(C_{ij})_{ij}$ and $D=(D_{ij})_{ij}$ are defined 
as 
\begin{equation}
\psi_{m+j}(x)=\sum_{i=1}^{n}\psi_{i}(x) C_{ij}+ h_{m+j}(x),
\quad h_{m+j}(x)\in\mathcal{O}_{t_1}\quad (j=1,\ldots,m)
\end{equation}
around $x=t_1$ and 
\begin{equation}
\psi_{j}(x)=\sum_{i=1}^{n}\psi_{n+i}(x) D_{ij}+ h_j(x),
\quad h_j(x)\in\mathcal{O}_{t_2}\quad(j=1,\ldots,m)
\end{equation}
around $x=t_2$. 
As to the monodromy around $x=\infty$, we remark that 
\begin{equation}
\quad
M_\infty^{-1}=M_1M_2
\sim {\rm diag}(\overbrace{f_1,\ldots, f_1}^{n},\overbrace{f_2,\ldots,f_2}^{m-1},f_3), 
\end{equation}
where $f_i=e(\rho_i)\quad(i=1,2,3)$. 

\begin{Theorem}\label{mainII,III}
The monodromy matrices $M_1$, $M_2$ 
for the canonical solution matrices $\Psi(x)$ of types $({\rm II})_{2n}$
and $({\rm III})_{2n+1}$ 
are expressed as \eqref{eq:MCD} in terms of the connection matrices 
$C=(C_{ij})_{ij}$ and $D=(D_{ij})_{ij}$ determined as follows $:$ 
\newline
\begin{eqnarray}
&&({\rm II})_{2n}:\nonumber\\
&&\begin{array}{rll}
C_{ij}&=
(-1)^{n-1}\ds{
\frac{(t_1-t_2)^{\rho_3-\alpha_i}}{(t_2-t_1)^{\rho_3-\beta_j}}e(\tfrac{1}{2}(\rho_3-\alpha_i-\beta_j))
\frac{\Gamma(\beta_j+1)\Gamma(-\alpha_i)}{\Gamma(1+\rho_1-\alpha_i)\Gamma(\beta_j-\rho_1)}}\\
&\ds{\frac{\prod_{k\neq i,\,1\leq k\leq n}\Gamma(1+\alpha_{k}-\alpha_i)}
{\prod_{k\neq j,\,1\leq k\leq n}\Gamma(1+\rho_1+\rho_2-\alpha_1-\beta_{k})}
\frac{\prod_{k\neq j,\,1\leq k\leq n}\Gamma(\beta_j-\beta_{k})}
{\prod_{k\neq i,\,1\leq k\leq n}\Gamma(\beta_j+\alpha_{k}-\rho_1-\rho_2)}}
\quad (1\leq i,j\leq n),\\[12pt]
D_{ij}
&=
(-1)^{n-1}
\ds{\frac{(t_2-t_1)^{\rho_3-\beta_i}}{(t_1-t_2)^{\rho_3-\alpha_j}}
e(\tfrac{1}{2}(\alpha_j+\beta_i-\rho_3))
\frac{\Gamma(-\beta_{i})\Gamma(\alpha_j+1)}{\Gamma(\alpha_j-\rho_1)\Gamma(1+\rho_1-\beta_i)}}\\
&
\ds{\frac{\prod_{k\neq j,\,1\leq k\leq n}\Gamma(\alpha_j-\alpha_{k})}
{\prod_{k\neq i,\,1\leq k\leq n}\Gamma(\alpha_j+\beta_{k}-\rho_1-\rho_2)}
\frac{\prod_{k\neq i,\,1\leq k\leq n}\Gamma(1+\beta_{k}-\beta_i)}
{\prod_{k\neq j,\,1\leq k\leq n}\Gamma(1+\rho_1+\rho_2-\alpha_k-\beta_i)}}
\quad (1\leq i,j\leq n),
\end{array}
\end{eqnarray}
\begin{eqnarray}
&&({\rm III})_{2n+1}:\\
&&\begin{array}{rll}
 C_{ij}&=
(-1)^{n}\ds{e(\tfrac{1}{2}(\rho_3-\alpha_i-\beta_j))
\frac{(t_1-t_2)^{\rho_3-\alpha_i}}{(t_2-t_1)^{\rho_3-\beta_j}}
\frac{\Gamma(\beta_j+1)\Gamma(-\alpha_i)}{\Gamma(\alpha_i-\rho_1)\Gamma(\alpha_i-\rho_2)}}\\
&\ds{\frac{\prod_{k\neq i,\,1\leq k\leq n+1}\Gamma(1+\alpha_{k}-\alpha_i)}
{\prod_{k\neq j,\,1\leq k\leq n}\Gamma(1+\rho_1+\rho_2-\alpha_1-\beta_{k})}
\frac{\prod_{k\neq j,\,1\leq k\leq n}\Gamma(\beta_j-\beta_{k})}
{\prod_{k\neq i,\,1\leq k\leq n+1}\Gamma(\beta_j+\alpha_{k}-\rho_1-\rho_2)}}
\,
\left(\begin{array}{ll}
1\leq i\leq n+1\\
1\leq j\leq n
\end{array}\right),\\[12pt]
D_{ij}
&=
(-1)^{n-1}\ds{e(\tfrac{1}{2}(\alpha_j+\beta_i-\rho_3))
\frac{(t_2-t_1)^{\rho_3-\beta_i}}{(t_1-t_2)^{\rho_3-\alpha_j}}
\frac{\Gamma(-\beta_{i})\Gamma(\alpha_j+1)}{\Gamma(1+\rho_1-\alpha_j)\Gamma(1+\rho_2-\alpha_j)}}\\
&
\ds{\frac{\prod_{k\neq j,\,1\leq k\leq n+1}\Gamma(\alpha_j-\alpha_{k})}
{\prod_{k\neq i,\,1\leq k\leq n}\Gamma(\alpha_j+\beta_{k}-\rho_1-\rho_2)}
\frac{\prod_{k\neq i,\,1\leq k\leq n}\Gamma(1+\beta_{k}-\beta_i)}
{\prod_{k\neq j,\,1\leq k\leq n+1}\Gamma(1+\rho_1+\rho_2-\alpha_k-\beta_i)}}
\,\left(\begin{array}{ll}
1\leq i\leq n\\
1\leq j\leq n+1
\end{array}\right).
\end{array}
\end{eqnarray}
\end{Theorem}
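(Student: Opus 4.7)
The plan is to deduce Theorem~\ref{mainII,III} from the inductive construction of the Okubo systems in Yokoyama's list as iterated compositions $\mathrm{add}\circ\mathrm{mc}\circ\mathrm{add}$ starting from a rank-one system, transporting the canonical solution matrix at each stage by means of Theorem~\ref{connection}.

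First I would carry out the construction summarized in Section~4. Starting from a rank-one Okubo system on $\mathbb{P}^1\setminus\{t_1,t_2,\infty\}$ (whose solution is simply a product of power functions at $t_1,t_2,\infty$), I apply the composition $\mathrm{add}_{(0,\ldots,\rho,\ldots,0)}\circ\mathrm{mc}_{-\rho-c}\circ\mathrm{add}_{(0,\ldots,c,\ldots,0)}$ the appropriate number of times, with suitably chosen parameters $\rho,c$ at each stage, to reach the partitions $(n_1,n_2)$ and $(m_1,m_2,m_3)$ characterizing type $({\rm II})_{2n}$ or $({\rm III})_{2n+1}$. A crucial preliminary step is to identify the resulting Schlesinger system with Haraoka's canonical form of Theorem~2.3; this amounts to absorbing an explicit diagonal gauge, precisely the factor $\mathrm{diag}(a_i^{-1},b_j^{-1})$ displayed after Theorem~2.3, into the normalization of $\Psi(x)$.

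Second, I would iterate Theorem~\ref{connection} along this tower. At each stage the theorem provides the connection coefficients of the singular solution freshly attached at $t_1$ (or at $t_2$) in closed form in terms of the connection coefficients of the previous stage, up to Gamma-factors and the branch factors $(t_1-t_2)^{\rho_3-\alpha_i}$, $(t_2-t_1)^{\rho_3-\beta_j}$ together with half-integer exponentials $e\bigl(\tfrac12(\cdot)\bigr)$. After all iterations and elementary Gamma-function manipulation, this yields a distinguished slice of the connection matrices, say the entries $C_{1j}$ and $D_{j1}$ for type II, and analogously one row and one column for type III, matching exactly the formulas claimed in the statement.

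Finally, the remaining entries are obtained by symmetry. Since the exponents $(\alpha_1,\ldots,\alpha_m)$ at $t_1$ and $(\beta_1,\ldots,\beta_n)$ at $t_2$ enter Haraoka's canonical form symmetrically under permutations, any transposition $\alpha_i\leftrightarrow\alpha_1$ or $\beta_j\leftrightarrow\beta_1$ is realized by the adjoint action of a constant permutation matrix on $A$ (up to an additional diagonal gauge that is absorbed into $\Psi(x)$). This conjugation permutes rows of $C$ and columns of $D$, so applying all such permutations to the already-computed row and column produces the complete matrices. The principal obstacle, more technical than conceptual, is the bookkeeping of diagonal gauges and branch arguments through every stage of the induction: reconciling the signs $(-1)^{n-1}$ of type II versus $(-1)^n$ of type III, the precise choices of $\arg(t_1-t_2)$ versus $\arg(t_2-t_1)$ fixed in Section~2.2, and the half-integer exponentials $e\bigl(\tfrac12(\rho_3-\alpha_i-\beta_j)\bigr)$ with the output of the raw mc-add recursion requires careful tracking, but presents no structural difficulty once the symmetry step is carried out after the iteration rather than before.
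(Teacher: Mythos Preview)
Your proposal follows the paper's approach: iterate the $\mathrm{add}\circ\mathrm{mc}\circ\mathrm{add}$ construction, transport connection coefficients via Theorem~\ref{connection}, then fill in the remaining entries by the permutation symmetry of the exponents. One point needs correction, however: the recursion does not deliver an entire row $C_{1j}$ or column $D_{j1}$ as you suggest, because at each stage the freshly created exponent sits in the $(k2)$-block of Section~4.3, for which closed gamma-product formulas are unavailable (see the remark at the end of that section). The paper therefore tracks only the single entries $C_{11}$ and $D_{11}$---whose indices are present from the initial rank-two system $({\rm II})_2$ onward---through the full tower, and then obtains every other $C_{ij}$, $D_{ij}$ from these two by simultaneously exchanging $\alpha_1\leftrightarrow\alpha_i$ and $\beta_1\leftrightarrow\beta_j$, exactly as in your symmetry step.
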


In the following sections, we prove these theorems by the method of middle convolutions.

\section{Middle convolutions for Schlesinger systems}

In this section, we briefly recall the definitions of the Katz
operations (the addition and the 
middle convolution) for a Schlesinger system and  
its monodromy representation (\cite{DR2007}).

\subsection{Addition}

For each $r$-tuple of matrices 
${\bs A}=(A_1,\ldots,A_r)\in {\rm Mat}(n;\mathbb{C})^r$, 
we consider the Schlesinger system 
\begin{equation}\label{eq:Sch2}
\frac{d}{dx} Y=\left(\sum_{k=1}^{r}\frac{A_k}{x-t_k}\right)Y, 
\quad A_k\in{\rm Mat}(n;\mathbb{C})\ \ (k=1,\ldots,r), 
\end{equation}
of ordinary differential equations on 
$\mathcal{D}=\mathbb{P}^1\backslash \{t_1,\ldots,t_r,\infty\}$. 
We denote by $A_\infty=-A_1-\cdots-A_r$ the residue matrix at
$x=\infty$.  
For an $r$-tuple 
${\bs a}=(a_1,\ldots, a_r)\in \mathbb{C}^r$, 
the addition ${\rm add}_{\bs a}({\bs A})$ of ${\bs A}$ by ${\bs a}$
is defined simply as
\begin{equation}
{\rm add}_{\bs a}({\bs A})=(A_1+a_1,\ldots,A_r+a_r).
\end{equation}
The corresponding Schlesinger system is given by 
\begin{equation}\label{eq:addSch}
\frac{d}{dx} Z=\left(\sum_{k=1}^{r}\frac{A_k+a_k}{x-t_k}\right)Z, 
\quad A_k\in{\rm Mat}(n;\mathbb{C})\ \ (k=1,\ldots,r). 
\end{equation}

Let  $Y(x)$ be a fundamental solution matrix 
of the Schlesinger system \eqref{eq:Sch2} associated with 
an $r$-tuple of matrices 
$\bs{A}\in{\rm Mat}(n;\mathbb{C})^r$. 
Then the monodromy matrices $M_k$ ($k=1,\ldots,r$) 
defined as 
\begin{equation}
\gamma_{k}\!\cdot\! Y(x)=Y(x)M_k
\end{equation}
give rise to an $r$-tuple ${\bs M}=(M_1,\ldots,M_r)
\in{\rm GL}(n;\mathbb{C})^r$ of invertible matrices.
The addition for 
an $r$-tuple ${\bs M}=(M_1,\ldots,M_r)$ is defined by 
\begin{equation}\label{addMon}
{\rm Add}_{{\bs \lambda}}({\bs M})=(\lambda_1 M_1, \ldots, \lambda_r M_r ) 
\end{equation}
for each ${\bs \lambda }=(\lambda_1,\ldots,\lambda_r)\in(\mathbb{C}^\ast)^r$, where 
$\mathbb{C}^\ast=\mathbb{C}\backslash\{0\}$.

We remark that the Schlesinger system \eqref{eq:addSch} is obtained
from \eqref{eq:Sch2} by the operation
\begin{equation}
Z=\prod_{k=1}^r(x-t_k)^{a_k}Y.
\end{equation}
Then 
the monodromy matrices for the fundamental solution 
matrix $Z(x)=\prod_{k=1}^r(x-t_k)^{a_k}Y(x)$ of \eqref{eq:addSch}
is 
given by \eqref{addMon} with $\lambda_k=e(a_k)$ 
$(k=1,\ldots,r)$. 

\subsection{Middle convolution of a Schlesinger system}
The middle convolution 
${\rm mc}_{\mu}({\bs A})$ of ${\bs A}$ with parameter $\mu\in\mathbb{C}^\ast$ 
is defined through three steps (\cite{DR2007}).

\par\medskip\noindent
{\bf Step 1}  (Convolution):\ \ 
We first define the {\em convolution}  ${\rm c}_{\mu}({\bs A})$ of ${\bs A}$ with 
parameter $\mu$ as the $r$-tuple of 
matrices ${\bs B}=(B_1,\ldots,B_r)\in {\rm Mat}(nr;\mathbb{C})$ by setting 
\begin{equation}
B_k=
\begin{pmatrix}
0   & & \ldots & & 0 \\
    & \ddots & & & \\
A_1 & \ldots & A_k+\mu &\ldots &A_r\\
    & & & \ddots & \\
0   & & \ldots & & 0
\end{pmatrix}
\in {\rm Mat}(nr;\mathbb{C})\qquad(k=1,\ldots,r)
\end{equation}
where $B_k$ is zero outside the $k$-th block row.
The Schlesinger system associated with $\bs B$
can be written in the form of an Okubo system
\begin{equation}\label{eq:convolution}
(x-T)\frac{d}{dx}Z=BZ, 
\end{equation} 
where 
$T={\rm diag}(t_1I_{n},\ldots,t_rI_{n})$ 
and $B=B_1+\cdots+B_r$.  

\par\medskip\noindent

\medskip

\noindent{\bf Step 2} ($K$-Reduction):\ \
Setting $n_k={\rm rank}\, A_k$,
we decompose $A_k$ $(k=1,\ldots,r)$ as 
$A_k=P_kQ_k$ with two matrices 
$P_k\in {\rm Mat}(n,n_k;\mathbb{C})$, 
$Q_k\in {\rm Mat}(n_k,n;\mathbb{C})$, 
 and take 
the block matrix 
\begin{equation}\label{QSA}
Q=
\begin{pmatrix}
Q_1 & & \\
 & \ddots & \\
 & & Q_r 
\end{pmatrix}
\in\mbox{\rm Mat}(\widetilde{n},nr;\mathbb{C})\qquad 
(\widetilde{n}=\sum_{k=1}^rn_k ).
\end{equation}
Since the $(i,j)$-block of $B$ is given by 
$B_{ij}=A_{j}+\mu\delta_{ij}=P_jQ_j+\mu\delta_{ij}$, we have 
$Q_iB_{ij}=(Q_iP_j+\mu\delta_{ij})Q_j$, namely
\begin{equation}
QB=\widetilde{B}Q,\quad 
\widetilde{B}=\Big(Q_{i}P_j+\mu\delta_{ij}\Big)_{i,j=1}^{r}.  
\end{equation}
Hence, the system \eqref{eq:convolution}
gives rise to the Okubo system
\begin{equation}
(x-T)\frac{d\ }{dx}\widetilde{Z}=\widetilde{B}\widetilde{Z}
\end{equation}
for $\widetilde{Z}=QZ$. 
Equivalently, we obtain the Schlesinger system 
\begin{equation}\label{eq:K-part}
\frac{d}{dx}\widetilde{Z}=\left(
\sum_{k=1}^r\frac{\widetilde{B}_k}{x-t_k}
\right)\widetilde{Z}
\end{equation}
corresponding to $\widetilde{\bs B}=(\widetilde{B}_1,\ldots,
\widetilde{B}_r)$, 
where
\begin{equation}
\widetilde{B}_k
=
\begin{pmatrix}
0   & & \ldots & & 0 \\
    & \ddots & & & \\
Q_kP_1 & \ldots & Q_kP_k+\mu &\ldots &Q_kP_r\\
    & & & \ddots & \\
0   & & \ldots & & 0
\end{pmatrix}\quad(k=1,\ldots,r).  
\end{equation}
\par\medskip\noindent
{\bf Step 3} ($L$-Reduction):\ \  
Setting $m={\rm rank}\,\widetilde{B}$, 
we further decompose $\widetilde{B}$ as 
$\widetilde B=P_0Q_0$ by two matrices 
$P_0\in {\rm Mat}(\widetilde{n}, m;\mathbb{C})$, 
$Q_0\in {\rm Mat}(m,\widetilde{n};\mathbb{C})$. 
Since $\widetilde{B}_k$ is expressed as $\widetilde{B}_k
=E_k\widetilde{B}$, $E_k={\rm diag}(0.\ldots,I_{n_k},\ldots,0)$, 
we have
\begin{equation}\label{Q0S0A}
Q_0\widetilde{B}_k=Q_0E_k\widetilde{B}=
Q_0E_kP_0Q_0.  
\end{equation}
Hence, multiplying \eqref{eq:K-part} by $Q_0$  
we obtain 
\begin{equation}
\frac{d}{dx}Q_0\widetilde{Z}=\left(
\sum_{k=1}^r\frac{Q_0E_kP_0}{x-t_k}
\right)Q_0\widetilde{Z}. 
\end{equation}
Setting $\widehat{Z}=Q_0\widetilde{Z}$, we obtain the 
Schlesinger system 
\begin{equation}\label{eq:L-part}
\frac{d}{dx}\widehat{Z}=\left(
\sum_{k=1}^r \frac{\widehat{B}_k}{x-t_k}
\right)\widehat{Z},
\quad \widehat{B}_k=Q_0E_kP_0\quad(k=1,\ldots,r)
\end{equation}
associated with 
${\rm mc}_{\mu}({\bs A})=(\widehat{B}_1,\ldots,\widehat{B}_r)$. 
We call the system \eqref{eq:L-part} the 
{\em middle convolution}
of \eqref{eq:Sch2} with parameter $\mu$.
We note that these matrices $\widehat B_i$ $(i=1,\ldots,r)$ 
are realizations of the linear transformations on 
$\mathbb{C}^{nr}/\big((\bigoplus_{k=1}^r{\rm Ker}\,A_k)\oplus{\rm Ker}\,B\big)$ induced from $B_i$.

\subsection{Middle convolution of a monodromy representation}

We next recall the middle convolution of 
monodromy matrices.  

Let  $Y(x)$ be a fundamental solution matrix 
of the Schlesinger system \eqref{eq:Sch2} associated with 
an $r$-tuple of matrices 
$\bs{A}\in{\rm Mat}(n;\mathbb{C})^r$. 
Then the monodromy matrices $M_k$ ($k=1,\ldots,r$) 
defined as 
\begin{equation}\label{eq:gYM}
\gamma_{k}\!\cdot\! Y(x)=Y(x)M_k
\end{equation}
give rise to an $r$-tuple ${\bs M}=(M_1,\ldots,M_r)
\in{\rm GL}(n;\mathbb{C})^r$ of invertible matrices. 
The multiplicative middle convolution 
${\rm MC}_\lambda(\bs{M})$ we are going to explain below 
provides a way to  
construct the monodromy matrices for a certain 
fundamental solution matrix of the Schlesinger system 
associated with the middle convolution 
${\rm mc}_{\mu}(\bs A)$ with $\lambda=e(\mu)$. 

\par\medskip
Let $\bs{M}=(M_1,\ldots,M_r)\in{\rm GL}(n;\mathbb{C})^r$ 
be an arbitrary $r$-tuple of invertible matrices.  
The multiplicative middle convolution 
${\rm MC}_{\lambda}({\bs M})$ of $\bs{M}$ with parameter 
$\lambda\in\mathbb{C}^\ast$ is constructed 
through three steps.  

\par\medskip\noindent 
{\bf Step 1} (Convolution):\ \ 
We define an $r$-tuple of invertible matrices 
${\bs N}=(N_1,\ldots,N_r)\in{\rm GL}(nr;\mathbb{C})^r$ 
as follows: 
\begin{equation}\label{mon:convolution}
N_k=
\begin{pmatrix}
1   & & \ldots & & 0 \\
    & \ddots & & & \\
\lambda(M_1-1) & \ldots & \lambda M_k &\ldots &M_r-1\\
    & & & \ddots & \\
0   & & \ldots & & 1
\end{pmatrix}
\in {\rm GL}(nr;\mathbb{C}).  
\end{equation}
Then we call $\bs N$ the {\em 
convolution} of $\bs M$ with parameter $\lambda$, and denote it by 
${\rm C}_{\lambda}(\bs M)$.  
As we will see later, ${\rm C}_{\lambda}(\bs M)$ represents 
the $r$-tuple of monodromy matrices for a fundamental 
solution matrix of the Schlesinger system 
${\rm c}_{\mu}({\bs A})$.

\par\medskip\noindent
{\bf Step 2} ($\mathcal{K}$-Reduction):\ \ 
Setting $n_k={\rm rank}\,(M_k-1)$, 
we decompose $M_k-1\,(k=1,\ldots,r)$ as $M_k-1 =\cP_k\cQ_k$ with two
matrices 
$\cP_k\in{\rm Mat}(n,n_k;\mathbb{C}),\, \cQ_k\in{\rm Mat}(n_k,n;\mathbb{C})$. 
Taking a right inverse $\cS_k\in {\rm Mat}(n,n_k;\mathbb{C})$ of $\cQ_k$ 
for each $k=1,\ldots,r$ so that $\cQ_k\cS_k=I_{n_k}$, 
we define the block matrices $\cQ$ and 
$\cS$  by 
\begin{equation}\label{QS}
\cQ=\begin{pmatrix}
\cQ_1 &&\\
&\ddots &\\
&& \cQ_r
\end{pmatrix}
\in\mbox{\rm Mat}(\widetilde{n}, nr;\mathbb{C}),
\quad
\cS=\begin{pmatrix}
\cS_1 &&\\
&\ddots &\\
&& \cS_r
\end{pmatrix}
\in\mbox{\rm Mat}(nr,\widetilde{n};\mathbb{C}).  
\end{equation}
where $\widetilde{n}=\sum_{i=1}^{r} n_i$. 
Since the $(k,j)$-block of $N_k-1$ is given as 
\begin{equation}
(N_k-1)_{kj}=
\begin{cases}
\ \lambda(M_j-1)=\lambda \cP_k\cQ_k  &(1\le j<k)\\
\ \lambda M_k-1=\lambda \cP_k\cQ_k+(\lambda-1) &(j=k)\\
\ M_j-1=\cP_k\cQ_k &(k<j\le r), 
\end{cases}
\end{equation}
by $\cQ_k\cS_k=1$ we obtain
$\cQ(N_k-1)\cS=\widetilde{N}_{k}-1$, i.e.\ $\cQ N_k\cS=\widetilde{N}_k$, 
where 
\begin{equation}\label{mon:convolution2}
\widetilde{N}_k=
\begin{pmatrix}
1   & & \ldots & & 0 \\
    & \ddots & & & \\
\lambda \cQ_k\cP_1 & \ldots & \lambda (\cQ_k\cP_k+1)&\ldots &\cQ_k\cP_r\\
    & & & \ddots & \\
0   & & \ldots & & 1
\end{pmatrix}
\in {\rm GL}(\widetilde{n};\mathbb{C}), 
\end{equation}
which gives a ${\mathcal K}$-reduction of the middle convolution 
$\bs{\widetilde{N}}=(\widetilde{N}_1,\ldots,\widetilde{N}_r)$.
\par\medskip
\noindent{\bf Step 3}  (${\cal L}$-reduction):\ \
To construct monodromy matrices corresponding to the middle convolution
${\rm MC}_{\lambda}(\bs{M})$, 
we set $\widetilde{N}_0=\widetilde{N}_1\cdots \widetilde{N}_r$, 
and decompose $\widetilde{N}_0-1$, 
as $\widetilde{N}_0-1=\cP_0\cQ_0$ 
by two matrices 
$\cP_0\in {\rm Mat}(\widetilde{n}, m;\mathbb{C})$, 
$\cQ_0\in {\rm Mat}(m,\widetilde{n};\mathbb{C})$
with $m={\rm rank}\,(\widetilde{N}_0-1)$, and 
take a right inverse $\cS_0\in{\rm Mat}(\widetilde{n},m;\mathbb{C})$ 
so that $\cQ_0\cS_0=1$.  
Then we obtain an $r$-tuple of monodromy matrices 
$\widehat{\bs{N}}(\widehat{N}_1,\ldots,\widehat{N}_r)$ 
as
\begin{equation}\label{Q0S0}
\widehat{N}_k=\cQ_0 \widetilde{N}_k \cS_0\qquad(k=1,\ldots,r).  
\end{equation}
We call the $r$-tuple $\widehat{\bs{N}}$ the 
{\em middle convolution}
of $\bs{M}$ with parameter $\lambda$, and denote by 
${\rm MC}_{\lambda}(\bs M)$.  
We remark that these matrices $\widehat{N}_i$ ($i=1,\ldots,r$) 
are realizations of the linear transformations on 
$\mathbb{C}^{nr}/\big((\bigoplus_{k=1}^{r}{\rm Ker}\,(M_k-1))\oplus
{\rm Ker}\,(N_0-1)\big)$ induced from $N_i$, where 
$N_0=N_1\cdots N_r$. 

\subsection{Fundamental solution matrices}
Let $Y(x)$ be a fundamental solution matrix of the Schlesinger system of \eqref{eq:Sch} and $\bs{M}=(M_1,\ldots,M_r)$ the 
$r$-tuple of invertible matrices defined by the monodromy 
representation as in \eqref{eq:gYM}.  
We summarize below how one can obtain fundamental 
solution matrices whose monodromy representations 
correspond to the convolution ${\rm C}_{\lambda}(\bs{M})$ 
and the middle convolution ${\rm MC}_{\lambda}(\bs{M})$ 
of $\bs{M}$.

Following the construction by \cite{DR2007}, 
for a complex parameter 
$\mu\in\mathbb{C}$ we consider 
the $nr\times n$ block matrices 
\begin{equation}\label{eq:defILY}
I^{\mu}_{L_k}Y(x)=
\bigg(\int_{L_k}(x-u)^{\mu}\,Y(u)
\frac{du}{u-t_i}\bigg)_{i=1}^{r}
\qquad(k=1,\ldots,r), 
\end{equation} 
called the {\em Euler transforms} of $Y(x)$.  
Here $L_k$ denotes the double loop in the 
$u$-plane 
$\mathcal{D}_{x}=\mathbb{C}\backslash\{t_1,\ldots,t_r,x\}$ 
encircling $u=t_k$ and $u=x$ for each $k=1,\ldots,r$.  
We use the symbols $\alpha_\infty,\alpha_1,\ldots,\alpha_r$ and $\alpha_x$ 
for the generators of $\pi_{1}(\mathcal{D}_x,p_0)$ 
encircling $u=\infty, t_1,\ldots,t_r$ and $u=x$ 
in the positive direction 
such that 
$\alpha_\infty\alpha_1\cdots \alpha_r \alpha_x=1$.  
Then the double loops $L_k$ ($k=1,\ldots,r$) are 
expressed as 
$L_k
=L[t_k,x]
=\alpha_k^{-1}\alpha_x^{-1}\alpha_k\alpha_x$
(See Figure 2). 
We also set 
$L_\infty=L[\infty, x]=\alpha_\infty^{-1}\alpha_x^{-1}\alpha_\infty\alpha_x$. 
\begin{figure}[htbp]
\centering
\unitlength=1.2pt
\begin{picture}(80,80)(0,-14)
\put(34,46){\circle*{3}}
\put(73,46){\circle*{3}}
\put(153,46){\circle*{3}}
\put(192,46){\circle*{3}}
\put(105,43){$\cdots$}
\put(30,56){$t_1$}
\put(73,56){$t_2$}
\put(148,56){$t_r$}
\put(190,56){$x$}
\put(110,0){$p_0$}
\put(50,20){$\alpha_1$}
\put(95,35){$\alpha_2$}
\put(123,35){$\alpha_r$}
\put(170,20){$\alpha_x$}
\put(67,36){\vector(-2,1){2}}
\put(95,32){\vector(-2,3){2}}
\put(147,33){\vector(3,2){2}}
\put(175,31){\vector(3,2){2}}
\end{picture}
\includegraphics[scale=0.85, bb=102 -23 300 100]{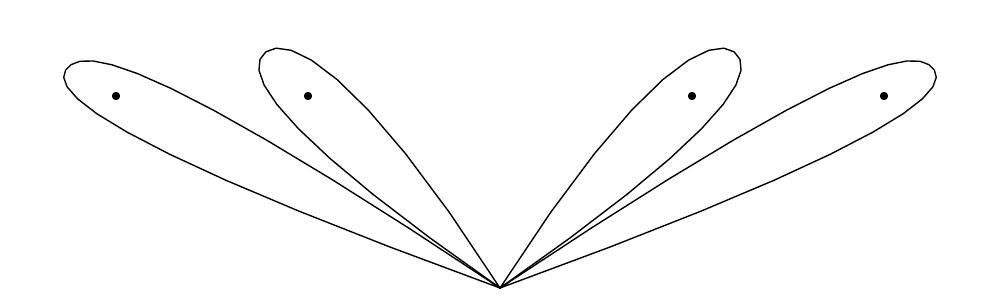}
\caption{}
\end{figure}

We define the $nr\times nr$ block matrix $I^{\mu}Y(x)$ by 
\begin{equation}\label{eq:defIY}
\begin{split}
I^{\mu}Y(x)
&
=
\left(
I^{\mu}_{L_1}Y(x),\ldots,
I^{\mu}_{L_r}Y(x)
\right)
\\[4pt]
&
=
\bigg(
\int_{L_j}(x-u)^\mu\,Y(u)\frac{du}{u-t_i}
\bigg)_{1\le i,j\le r}
\in {\rm Mat}(nr;\mathcal{O}(\widetilde{D})).  
\end{split}
\end{equation}
It is known by \cite{DR2007} that $I^{\mu}Y(x)$ 
is a solution matrix of the Okubo system
\eqref{eq:convolution} associated with 
${\rm c}_{\mu}(\bs{A})$, 
and that 
the monodromy of  
$I^{\mu}Y(x)$ is described as 
\begin{equation}\label{eq:gIY=IYN}
\gamma_k\!\cdot\! I^{\mu}Y(x)=I^{\mu}Y(x)\,
N_k\qquad(k=1,\ldots,r)
\end{equation}
in terms of the convolution 
${\rm C}_\lambda(\bs{M})=(N_1,\ldots,N_r)$ 
of $\bs{M}$ with $\lambda=e(\mu)$. 

\begin{Theorem}\label{thm:IY} 
Suppose that the following three conditions are satisfied. 
\begin{itemize}
\setlength{\itemsep}{-2pt}
\item[$({\rm i})$] 
$\mu\in\mathbb{C}\backslash\mathbb{Z}$. 
\item[$({\rm ii})$]  
For each $k=1,\ldots,r$ and $\infty$,  
any pair of 
distinct eigenvalues of $A_k$
has no integer difference. 
\item[$({\rm iii})$]
$A_k$ $(k=1,\ldots,r)$
and $A_\infty-\mu$ have no 
eigenvalue in $\mathbb{Z}_{>0}$.  
\end{itemize}
Then the 
$nr\times nr$ block matrix
$Z(x)=I^{\mu}Y(x)$ 
is a fundamental solution matrix of the Okubo system
\eqref{eq:convolution} associated with 
${\rm c}_{\mu}(\bs{A})$. 
\end{Theorem}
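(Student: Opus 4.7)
The plan is to show $\det Z(x) \not\equiv 0$ for $Z(x) = I^\mu Y(x)$, since \cite{DR2007} already guarantees that $Z(x)$ solves the Okubo system associated with ${\rm c}_\mu(\bs{A})$. The strategy is to identify $Z(x)$, up to right multiplication by an invertible constant matrix, with the canonical solution matrix $\Psi(x)$ of the convolved Okubo system, and then invoke Okubo's determinant formula (Theorem \ref{det:Okubo1}).

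First I would analyze the local behavior of $Z(x)$ at each finite singular point $t_k$. Among the $r$ column blocks of $Z(x) = (I^\mu_{L_1}Y(x),\ldots, I^\mu_{L_r}Y(x))$, only the $k$-th block can be singular at $x = t_k$: for $j\neq k$, the contour $L_j$ does not degenerate as $x\to t_k$, so $I^\mu_{L_j}Y(x)$ is holomorphic there. For the $k$-th block I would substitute the local canonical expansion $Y(u) = F^{(k)}(u)(u-t_k)^{A_k}$ (which exists by condition (ii)) and reduce the Pochhammer double loop $L_k = \alpha_k^{-1}\alpha_x^{-1}\alpha_k\alpha_x$ to a simple segment from $t_k$ to $x$ at the cost of the prefactor $(e(\mu)-1)(e(A_k)-1)$. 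The change of variable $u = t_k + (x-t_k)s$ converts the remaining integrals into matrix-valued Beta integrals, producing an expansion
\begin{equation}
I^\mu_{L_k} Y(x) = G_k(x)\,(x-t_k)^{A_k + \mu}\,E_k,
\end{equation}
where $G_k(x)$ is an $nr\times n$ matrix holomorphic at $t_k$ with $G_k(t_k)$ of full column rank $n$ (its $k$-th row block contains the leading Beta contribution while the other rows contribute higher-order terms in $x-t_k$), and $E_k$ is an $n\times n$ constant involving $(e(\mu)-1)(e(A_k)-1)$ and a Gamma-function factor $\Gamma(A_k)\Gamma(\mu+1)/\Gamma(A_k+\mu+1)$. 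Conditions (i) and (iii) together imply $E_k$ is invertible: (i) gives $e(\mu)-1\ne 0$, and the absence of positive integer eigenvalues of $A_k$ prevents the Pochhammer prefactor $(e(A_k)-1)$ from killing an eigendirection in which the branch exponent $A_k-1$ would otherwise take a nonnegative integer value.

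Combining the local analyses at all $t_k$, $Z(x)$ exhibits the same local exponent structure at every $t_k$ as the canonical solution matrix $\Psi(x)$ of the convolved Okubo system of type $(n,\ldots,n)$: the $k$-th column block is singular with exponent $A_k+\mu = B_{kk}$ and the other blocks are regular. By the uniqueness of the local canonical form, this forces $Z(x) = \Psi(x)\,\mathrm{diag}(E_1,\ldots,E_r)$. Applying Theorem \ref{det:Okubo1} to the convolved system expresses $\det\Psi(x)$ as a Gamma-function quotient times powers of $(x-t_k)$, whose numerator involves the eigenvalues $\alpha^{(k)}_j+\mu$ of $B_{kk}$ and whose denominator involves the eigenvalues of the residue at infinity (which by the convolution mechanism reduce to those of $A_\infty-\mu$ together with trivial zero eigenvalues). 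Conditions (ii) and (iii) keep this Gamma quotient finite and nonzero, so $\det\Psi(x)\ne 0$, and multiplying by $\det\mathrm{diag}(E_1,\ldots,E_r)\ne 0$ yields $\det Z(x)\ne 0$.

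The main obstacle is the careful asymptotic bookkeeping in the first step: extracting the invertible constant $E_k$ from the Pochhammer double-loop integral requires tracking both the exponential twist $(e(\mu)-1)(e(A_k)-1)$ and the matrix-valued Beta integral with noncommuting exponent $A_k$, and checking that non-$k$th row blocks contribute only to higher-order terms. Once this normalization is secured, the comparison with the canonical solution matrix of the convolved Okubo system reduces nondegeneracy of $Z(x)$ to a direct application of Okubo's formula.
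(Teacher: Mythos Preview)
Your proposal contains a genuine gap. The claim that $I^\mu_{L_j}Y(x)$ is holomorphic at $x=t_k$ for $j\neq k$ is false: the double loop $L_j=\alpha_j^{-1}\alpha_x^{-1}\alpha_j\alpha_x$ contains the loop $\alpha_x$ around $u=x$, and as $x\to t_k$ this loop is pinched against the singularity $u=t_k$ of the integrand $Y(u)$. More directly, the monodromy relation $\gamma_k\!\cdot\! I^\mu Y(x)=I^\mu Y(x)\,N_k$ shows that the $j$-th column block acquires a contribution from the $k$-th column block, since the $(k,j)$-block of $N_k$ equals $\lambda(M_j-1)$ or $M_j-1$, which is generically nonzero. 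Hence $I^\mu_{L_j}Y(x)$ is not single-valued at $t_k$ and cannot be holomorphic there. The same remark applies to the canonical solution matrix $\Psi(x)$ of the convolved system: by \eqref{eq:Ckj} its $j$-th column block has singular part $\Psi_k^{(k)}C^{(kj)}$ at $t_k$, so your description of the ``local exponent structure'' of $\Psi$ is equally incorrect, and the identification $Z(x)=\Psi(x)\,\mathrm{diag}(E_1,\ldots,E_r)$ cannot be read off from that description.

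Even if you repair the argument by matching only the $k$-th column block at $t_k$ via Frobenius uniqueness, invoking Theorem~\ref{det:Okubo1} for the convolved system requires hypothesis \eqref{assumedet1} there, in particular that the eigenvalues of $B_{kk}=A_k+\mu$ are non-integers. Conditions (i)--(iii) do not imply this (e.g.\ $A_k$ may have eigenvalue $-\mu$), so Okubo's determinant formula is not available under the stated hypotheses. The paper avoids both issues: it uses the local asymptotics only to verify that each diagonal block $I^\mu Y(x)_{kk}$ and each $I^\mu_{L_\infty}Y(x)_i$ is individually nondegenerate, then shows via the monodromy that any vector in $\mathrm{Ker}\,I^\mu Y(x)$ must lie in $\mathcal{L}=\bigcap_{k}\mathrm{Ker}(N_k-1)$, and finally rewrites $I^\mu Y(x)\bs{v}$ for $\bs{v}\in\mathcal{L}$ as $-I^\mu_{L_\infty}Y(x)\,M_1\cdots M_r v$ to force $v=0$.
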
 

We prove that $I^{\mu}Y(x)$ is a fundamental solution matrix. 
The fundamental solution matrix $Y(x)$ of \eqref{eq:Sch2} is written of the
form
\begin{equation}
Y(x)=F^{(k)}(x)\,(x-t_k)^{A_k}C^{(k)},\quad F^{(k)}(x)\in 
{\rm Mat}(n;\mathcal O_{t_k}),\quad F^{(k)}(t_k)=I_n 
\end{equation}
around $x=t_k$ for each $k=1,\ldots,r$, and 
\begin{equation}
Y(x)=F^{(\infty)}(x)\,x^{-A_\infty}C^{(\infty)},\quad F^{(\infty)}(x)\in 
{\rm Mat}(n;\mathcal O_{\infty}),\quad F^{(\infty)}(\infty)=I_n
\end{equation}
around $x=\infty$.  
We investigate 
the local behavior of  
\begin{equation}
I^{\mu}Y(x)_{ij}=
\int_{L_j}\frac{(x-u)^{\mu}}{u-t_i}Y(u)du=
\int_{L_j}\frac{(x-u)^{\mu}}{u-t_i}F^{(j)}(u)(u-t_j)^{A_j}C^{(j)}du \quad
\end{equation}
at $x=t_j$. 
Changing the integration variable by $u=(x-t_j)v+t_j$, 
for $j\ne i$ we have 
\begin{eqnarray}
\int_{L_j}\frac{(x-u)^{\mu}}{u-t_i}Y(u)du
&=&
\int_{L[0,1]}\frac{(1-v)^{\mu}}{(x-t_j)v-(t_i-t_j)}
F^{(j)}((x-t_j)v+t_j)v^{A_j}(x-t_j)^{A_j+\mu+1}C^{(j)}dv
\nonumber
\\
&=&
\int_{L[0,1]}\frac{(1-v)^{\mu}}{t_j-t_i}
F^{(j)}(t_j)v^{A_j}dv\,(x-t_j)^{A_j+\mu+1}C^{(j)}\big(1+O(x-t_j)\big)
\nonumber
\\
&=&
\frac{\widetilde B(A_j+1,\mu+1) }{t_j-t_i}
(x-t_j)^{A_j+\mu+1}C^{(j)}\big(1+O(x-t_j)\big), 
\end{eqnarray}
where 
\begin{equation}
\begin{split}
\widetilde{B}(A,\beta)&=\int_{L[0,1]}u^{A-1}(1-u)^{\beta-1}du
\\
&=(e(A)-1)(e(\beta)-1)B(A,\beta)
\qquad(A\in{\rm Mat}(n;\mathbb{C}),\ \beta\in\mathbb{C})
\end{split}
\end{equation}
denotes the regularized beta function with a matrix argument. 
In this integral, as the base point we take a point 
in the interval $(0,1)$ with $\arg u=\arg (1-u)=0$.  
Similarly we have 
\begin{equation}
\begin{split}
\int_{L_i}\frac{(x-u)^{\mu}}{u-t_i}Y(u)du
&=\widetilde 
B(A_i,\mu+1)(x-t_i)^{A_i+\mu}C^{(i)}\big(1+O(x-t_i)\big)\quad (i=1,\cdots,r),
\\
\int_{L_\infty}\frac{(x-u)^{\mu}}{u-t_i}Y(u)du
&=-e^{-\pi i\mu}\widetilde
B(A_{\infty}-\mu,\mu+1)x^{-A_{\infty} +\mu}C^{(\infty)}
\big(1+O(x^{-1})\big).  
\end{split}
\end{equation}
Since $A_i$ has 
no eigenvalue in $\mathbb{Z}_{>0}$ by the assumption,  
we have $\det \widetilde{B}(A_i,\mu+1)\neq 0$, and hence 
${\rm det}(I^{\mu}Y(x)_{ii})\neq 0$ for $i=1,\ldots,r$.  
Similarly, 
${\rm det}(I^{\mu}_{L_{\infty}}Y(x)_i)\neq 0$ 
for $i=1,\ldots,r$.  
Consider the vector space 
\begin{equation}
\begin{split}
\mathcal{L}&=\bigcap_{i=1}^r{\rm Ker}(N_i-1)={\rm Ker}(N_{\infty}-1)
\\
&=
\{\bs{v}=(M_2\cdots M_rv,\ldots,M_rv,v)^t\in\mathbb{C}^{nr}\ |\ 
v\in{\rm Ker}(\lambda M_1\cdots M_r-1)\},
\end{split}
\end{equation}
where we regard square matrices as linear transformations 
acting on column vectors.
\begin{Lemma}\label{lem:KerIY}
Under the assumption of Theorem \ref{thm:IY}, we have 
${\rm Ker}(I^{\mu}Y(x))\subseteq \mathcal{L}$. 
\end{Lemma}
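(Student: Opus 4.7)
The strategy is to combine the monodromy relation \eqref{eq:gIY=IYN} with the explicit local behavior of $I^{\mu}Y(x)$ at $x=t_k$ already worked out in the proof of Theorem \ref{thm:IY}. Let $\bs{v}\in{\rm Ker}(I^{\mu}Y(x))$. Applying the analytic continuation $\gamma_k$ to the identity $I^{\mu}Y(x)\bs{v}=0$ and using $\gamma_k\!\cdot\!I^{\mu}Y(x)=I^{\mu}Y(x)N_k$, one gets $I^{\mu}Y(x)N_k\bs{v}=0$, and subtracting yields
\begin{equation}
I^{\mu}Y(x)(N_k-1)\bs{v}=0 \qquad (k=1,\ldots,r).
\end{equation}
Hence ${\rm Ker}(I^{\mu}Y(x))$ is stable under each $N_k-1$, and it suffices to show that $(N_k-1)\bs{v}=0$ for every $k$.

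The crucial structural observation comes from the explicit shape \eqref{mon:convolution} of $N_k$: all block rows of $N_k-1$ vanish except the $k$-th. Consequently, for any $\bs{v}=(v_1,\ldots,v_r)^{\rm t}$, the vector $(N_k-1)\bs{v}$ has the form $(0,\ldots,0,w_k,0,\ldots,0)^{\rm t}$ with $w_k$ in the $k$-th block, where
\begin{equation}
w_k=\lambda\sum_{j<k}(M_j-1)v_j+(\lambda M_k-1)v_k+\sum_{j>k}(M_j-1)v_j.
\end{equation}
Inserting this into $I^{\mu}Y(x)(N_k-1)\bs{v}=0$ collapses the sum and reduces the problem to the single equation $I^{\mu}_{L_k}Y(x)\,w_k=0$.

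To conclude $w_k=0$, I would read off the $k$-th block row of this equation and invoke the leading local expansion computed just above, namely
\begin{equation}
I^{\mu}_{L_k}Y(x)_{kk}=\widetilde{B}(A_k,\mu+1)\,(x-t_k)^{A_k+\mu}\,C^{(k)}\bigl(1+O(x-t_k)\bigr).
\end{equation}
Assumption (iii) together with (ii) ensures $\det\widetilde{B}(A_k,\mu+1)\neq 0$; $C^{(k)}$ is invertible since $Y(x)$ is a fundamental solution matrix; and the matrix exponential $(x-t_k)^{A_k+\mu}$ is invertible for every $x$ near (but not equal to) $t_k$. Thus the only way the vector-valued function $\widetilde{B}(A_k,\mu+1)(x-t_k)^{A_k+\mu}C^{(k)}w_k(1+O(x-t_k))$ can vanish identically is $w_k=0$. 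Hence $(N_k-1)\bs{v}=0$ for each $k=1,\ldots,r$, which gives $\bs{v}\in\bigcap_{k=1}^r{\rm Ker}(N_k-1)=\mathcal{L}$.

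I do not foresee a serious obstacle: both ingredients — the monodromy relation \eqref{eq:gIY=IYN} and the local asymptotics at $t_k$ — are already in place from the preceding discussion of Theorem \ref{thm:IY}. The only point that requires attention is the block structure of $N_k-1$, which ensures that $(N_k-1)\bs{v}$ is supported in a single block and thereby lets us isolate the invertible diagonal block of $I^{\mu}_{L_k}Y(x)$ at $x=t_k$.
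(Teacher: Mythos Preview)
Your proposal is correct and follows essentially the same argument as the paper: from $I^{\mu}Y(x)\bs{v}=0$ you pass to $I^{\mu}Y(x)(N_k-1)\bs{v}=0$, use that $N_k-1$ is supported in the $k$-th block row so that only the column $I^{\mu}_{L_k}Y(x)$ enters, and then isolate the $(k,k)$-block $I^{\mu}Y(x)_{kk}$, whose invertibility forces $w_k=0$. The paper's proof is more terse but identical in substance; your explicit identification of the block support of $(N_k-1)\bs{v}$ makes the passage to $I^{\mu}Y(x)_{kk}\,w_k=0$ a bit more transparent.
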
 
\begin{proof}
Assume that a column vector $\bs{v}=(v_1,\ldots,v_r)^{t}
\in\mathbb{C}^{nr}$ belongs to 
${\rm Ker}(I^{\mu}Y(x))$, namely, $I^{\mu}Y(x)\bs{v}=0$.  Then we have 
$(\gamma_k -1)\cdot I^{\mu}Y(x)\bs{v}=0$ for $k=1,\ldots,r$.
This implies that the $k$-th row of $I^{\mu}Y(x)$ satisfies
\begin{equation}
I^{\mu}Y(x)_{kk}\big(\sum_{1\le i<k}e(\mu)(M_i-1)v_i +(e(\mu)M_k-1)v_k+\sum_{k<i\leq
 r}(M_i-1)v_i\big)=0\quad (k=1,\ldots,r).  
\end{equation}
Since ${\rm det}(I^{\mu}Y(x)_{kk})\neq 0$, we have
\begin{equation}
\sum_{1\le i<k}e(\mu)(M_i-1)v_i +(e(\mu)M_k-1)v_k+\sum_{k<i\leq
 r}(M_i-1)v_i=0\quad (k=1,\ldots,r). 
\end{equation}
Hence we find that $\bs{v}\in {\rm Ker}(N_k-1)$ \ \ 
$(k=1,\ldots,r)$. 
\end{proof}
\noindent
To complete the proof, we assume 
$\bs{v}\in {\rm Ker}(I^\mu Y(x))$.  Then by Lemma \ref{lem:KerIY},
$\bs{v}$ is expressed as $\bs{v}=(M_2\cdots M_rv,\ldots,M_rv,v)$ 
for some $v\in{\rm Ker}(\lambda M_1\cdots M_r-1)$.  
In the following we set 
\begin{equation}
I^{\mu}_\alpha Y(x)=
\bigg(\int_{\alpha}(x-u)^{\mu}\,Y(u)
\frac{du}{u-t_i}\bigg)_{i=1}^{r}
\end{equation}
for each $\alpha\in \pi(\mathcal{D}_x,p_0)$. 
Note that 
by $L_k=\alpha_k^{-1}\alpha_x^{-1}\alpha_k\alpha_x$ 
we have 
\begin{equation}
\begin{split}
I^{\mu}_{L_k}Y(x)&=
I^{\mu}_{\alpha_k}Y(x)
(\lambda-1)-
I^{\mu}_{\alpha_x}Y(x)
(M_k-1)\quad(k=1,\ldots,r),
\\
I^{\mu}_{L_\infty}Y(x)&=
I^{\mu}_{\alpha_\infty}Y(x)
(\lambda-1)-
I^{\mu}_{\alpha_x}Y(x)
((\lambda M_1\cdots M_r)^{-1}-1). 
\end{split}
\end{equation}
In this notation $I^{\mu}Y(x)\bs{v}$ is computed as follows: 
\begin{equation}
\begin{split}
I^{\mu}Y(x)\bs{v}
&=
\sum_{k=1}^r I^{\mu}_{L_k}Y(x) 
M_{k+1}\cdots M_rv 
\\
&=
\sum_{k=1}^r 
\big(
I^{\mu}_{\alpha_k}Y(x) (\lambda-1)
-
I^{\mu}_{\alpha_x}Y(x) (M_k-1)
\big)
M_{k+1}\cdots M_rv 
\\
&=
(\lambda-1)\sum_{k=1}^r 
I^{\mu}_{\alpha_k}Y(x)
M_{k+1}\cdots M_rv 
-
\sum_{k=1}^{r}
I^{\mu}_{\alpha_x}Y(x) (M_k-1)
M_{k+1}\cdots M_rv 
\\
&=
(\lambda-1)
I^{\mu}_{\alpha_1\cdots\alpha_r}Y(x)v 
-
I^{\mu}_{\alpha_x}Y(x) (M_1\cdots M_r-1)v
\end{split}
\end{equation}
By 
\begin{equation}
\begin{split}
I^{\mu}_{\alpha_1\cdots\alpha_r}Y(x)
&=
I^{\mu}_{\alpha_\infty^{-1}\alpha_x^{-1}}Y(x)
=
I^{\mu}_{\alpha_\infty^{-1}}Y(x)\lambda^{-1}
+
I^{\mu}_{\alpha_x^{-1}}Y(x)
\\
&=
-
I^{\mu}_{\alpha_\infty}Y(x)M_1\cdots M_r
-
I^{\mu}_{\alpha_x}Y(x)\lambda^{-1}
\end{split}
\end{equation}
we obtain
\begin{equation}
\begin{split}
I^\mu Y(x)\bs{v}&=
-(\lambda-1)I^{\mu}_{\alpha_\infty}Y(x)M_1\cdots M_rv
-I^{\mu}_{\alpha_x}Y(x)(M_1\cdots M_r-\lambda^{-1})v
\\
&=
\left(
-(\lambda-1)I^{\mu}_{\alpha_\infty}Y(x)
+I^{\mu}_{\alpha_x}Y(x)((\lambda M_1\cdots M_r)^{-1}-1)
\right)M_1\cdots M_r v
\\
&=
-I^{\mu}_{L_\infty}Y(x)M_1\cdots M_r v. 
\end{split}
\end{equation}
Since ${\rm det}(I^{\mu}_{L_\infty}Y(x)_i)\neq 0$ 
($i=1,\ldots,r$), 
$I^{\mu}Y(x)\bs{v}=0$ implies $v=0$, and hence 
$\bs{v}=0$. This completes the proof of Theorem \ref{thm:IY}. 

\par\medskip
We set $n_k={\rm rank}\ A_k$ 
for $k=1,\ldots,r$, and $\widetilde{n}=\sum_{i=1}^{r} n_i$. 
\begin{Theorem}\label{thm:IYKpart}
In addition to the conditions 
$(${\rm i}$)$, $(${\rm ii}$)$, $(${\rm iii}$)$ of Theorem 
\ref{thm:IY}, 
suppose that the following condition is satisfied. 
\begin{itemize}
\item[$(${\rm iv}$)$] For each $k=1,\ldots,r$, $A_k$ has
non-integer eigenvalues 
$\alpha_1^{(k)},\ldots,\alpha^{(k)}_{n_k}$ 
where $n_k={\rm rank}\ A_k$. 
\end{itemize}
Using the constant matrices 
$Q$ and $\mathcal{S}$ as in 
\eqref{QSA} and \eqref{QS} respectively, 
we set 
$\widetilde{Z}(x)=QZ(x)\mathcal{S}$, 
$Z(x)=I^\mu Y(x)$. 
Then the 
$\widetilde{n}\times \widetilde{n}$ matrix 
$\widetilde{Z}(x)$ is a fundamental solution matrix of the $K$-reduction
\eqref{eq:K-part} of the convolution, and  
the monodromy matrices for $\widetilde{Z}(x)$ are 
given by the $\mathcal{K}$-reduction $\widetilde{\bs{N}}$ 
of ${\rm C}_\lambda(\bs{M})=\bs{N}$. 
\end{Theorem}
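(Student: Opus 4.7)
My plan is to establish the three parts of the theorem — that $\widetilde{Z}(x) = QZ(x)\mathcal{S}$ with $Z(x) = I^{\mu}Y(x)$ solves the K-reduced Schlesinger system \eqref{eq:K-part}, that it is invertible, and that its monodromy matrices coincide with the $\mathcal{K}$-reduction $\widetilde{\bs{N}}$ of $\bs{N}$ — in three coordinated steps. The guiding idea is to combine the intertwining identity $QB_k = \widetilde{B}_k Q$ on the Schlesinger side with the monodromy reduction $\widetilde{N}_k = \mathcal{Q}N_k\mathcal{S}$, exploiting the compatibility between the two reductions afforded by hypothesis (iv).

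First, I verify directly that $\widetilde{Z}(x)$ satisfies the K-reduced system. Applying the intertwining relation $QB = \widetilde{B}Q$ (established block-wise in the definition of the K-reduction via $Q_i B_{ij} = (Q_i P_j + \mu\delta_{ij}) Q_j$) to the convolution system $(x-T)Z'(x) = BZ(x)$ of Theorem \ref{thm:IY} yields $(x-T)(QZ)'(x) = \widetilde{B}(QZ)(x)$; since $\mathcal{S}$ is constant, right-multiplication preserves the equation, so $\widetilde{Z}(x) = QZ(x)\mathcal{S}$ solves \eqref{eq:K-part}.

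For the monodromy, by \eqref{eq:gIY=IYN} we have $\gamma_k\cdot\widetilde{Z}(x) = QZ(x)N_k\mathcal{S}$, while the target is $\widetilde{Z}(x)\widetilde{N}_k = QZ(x)\mathcal{S}\mathcal{Q}N_k\mathcal{S}$; equality reduces to $QZ(x)(I - \mathcal{S}\mathcal{Q})N_k\mathcal{S} = 0$. Under hypothesis (iv) combined with (ii), each $A_k$ is diagonalizable with $n_k = {\rm rank}\,A_k$ non-integer eigenvalues and a zero eigenspace of dimension $n - n_k$; hence $\ker A_k = \ker(M_k - 1)$, and we may choose the factorisations with $Q_k = \mathcal{Q}_k$, so $\ker Q = \ker\mathcal{Q}$. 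Direct inspection of \eqref{mon:convolution} shows that $\ker\mathcal{Q}$ is $N_k$-invariant: the identity block rows preserve each $\ker(M_i - 1)$, and in the $k$-th block row all off-diagonal terms $(M_j-1)v_j$ vanish while the diagonal contribution reduces to $\lambda v_k$. It therefore suffices to show $Z(x)\ker\mathcal{Q}\subseteq\ker\mathcal{Q}$. For $v = (0,\ldots,v_j,\ldots,0)$ with $v_j\in\ker A_j$, the relation $A_j v_j = 0$ gives $Y(u)v_j = F^{(j)}(u)v_j$, holomorphic at $u = t_j$. The $i$-th block $(Z(x)v)_i = \int_{L_j}(x-u)^{\mu}(u-t_i)^{-1}Y(u)v_j\,du$ has an integrand with trivial local monodromy at $u = t_j$, and the Pochhammer reduction $\int_{L_j} f\,du = (1-\lambda)\int_{\alpha_j} f\,du$ gives $(Z(x)v)_i = 0$ for $i\ne j$ (no residue at $t_j$) and $(Z(x)v)_j = 2\pi i(1-\lambda)(x-t_j)^{\mu}v_j$ (simple pole at $t_j$ with residue $F^{(j)}(t_j)v_j = v_j$). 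Since $v_j\in\ker Q_j$, the $j$-th block lies in $\ker Q_j$ as well, and so $Z(x)v\in\ker Q = \ker\mathcal{Q}$, proving the preservation.

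Invertibility of $\widetilde{Z}(x)$ then follows: if $QZ(x)\mathcal{S}w = 0$ for some $w\in\mathbb{C}^{\widetilde{n}}$, then $Z(x)\mathcal{S}w\in\ker Q = \ker\mathcal{Q}$; since $Z(x)$ preserves $\ker\mathcal{Q}$ and is invertible by Theorem \ref{thm:IY}, we have $Z(x)^{-1}(\ker\mathcal{Q}) = \ker\mathcal{Q}$, so $\mathcal{S}w\in\ker\mathcal{Q}\cap\mathrm{Im}\,\mathcal{S} = 0$ (the intersection being trivial because $\mathcal{Q}\mathcal{S} = I$), hence $w = 0$. The main technical content lies in the Pochhammer reduction of Step 2: although the integrand has non-trivial monodromy at $u = x$, the combination of trivial local monodromy at $u = t_j$ (afforded by $v_j\in\ker A_j$) with the simple-pole residue structure and the block action of $Q_j$ conspire to give the required annihilation, and it is precisely hypothesis (iv) that ensures $v_j\in\ker A_j$ entails $v_j\in\ker Q_j$.
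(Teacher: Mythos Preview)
Your approach is genuinely different from the paper's and contains a fixable gap. The paper completes $\mathcal{S}$ to an invertible matrix $\widetilde{\mathcal{S}}=(\mathcal{S},\mathcal{S}')$ with $\mathcal{Q}\mathcal{S}'=0$, observes that $(M_k-1)\mathcal{S}'_k=0$ forces $\gamma_k\cdot QI^{\mu}_{L_k}Y(x)\mathcal{S}'_k = QI^{\mu}_{L_k}Y(x)\mathcal{S}'_k\,e(\mu)$, and concludes $QZ(x)\mathcal{S}'=0$ because under (iv) the eigenvalue $e(\mu)$ cannot occur for any nonzero solution of the $K$-reduced system; a rank count then gives invertibility of $QZ(x)\mathcal{S}$, and the monodromy claim follows from $QZ(x)\mathcal{S}'=0$ together with $\mathrm{Im}(I-\mathcal{S}\mathcal{Q})=\mathrm{Im}\,\mathcal{S}'$.

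Your route via explicit Pochhammer reduction is more hands-on, but the claim that one may take $Q_k=\mathcal{Q}_k$ (equivalently $\ker A_k=\ker(M_k-1)$) is not justified. Writing $Y(x)=F^{(k)}(x)(x-t_k)^{A_k}C^{(k)}$ near $t_k$ gives $M_k=(C^{(k)})^{-1}e(A_k)C^{(k)}$, so $\ker(M_k-1)=(C^{(k)})^{-1}\ker A_k$, and there is no reason for the connection matrix $C^{(k)}$ to preserve $\ker A_k$. Accordingly the line ``$A_jv_j=0$ gives $Y(u)v_j=F^{(j)}(u)v_j$'' is incorrect as written.

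The repair is short. For $v_j\in\ker(M_j-1)$ the column $Y(u)v_j$ has trivial monodromy at $u=t_j$ and therefore, under (ii) and (iv), extends holomorphically there; its value $w_j:=\lim_{u\to t_j}Y(u)v_j$ satisfies $A_jw_j=0$ (insert the holomorphic solution into the Schlesinger system and let $u\to t_j$). Your residue computation then yields $(Z(x)v)_i=0$ for $i\ne j$ and $(Z(x)v)_j\in\mathbb{C}\,(x-t_j)^{\mu}w_j\subseteq\ker Q_j$. Hence $Z(x)\ker\mathcal{Q}\subseteq\ker Q$, and since $Z(x)$ is invertible and $\dim\ker\mathcal{Q}=\dim\ker Q$, in fact $Z(x)^{-1}\ker Q=\ker\mathcal{Q}$. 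Both the monodromy identity (via $\mathrm{Im}(I-\mathcal{S}\mathcal{Q})\subseteq\ker\mathcal{Q}$) and the injectivity of $\widetilde{Z}(x)$ then follow exactly as in your outline. The true role of hypothesis (iv) here is not to identify the two kernels but to ensure that trivial local monodromy forces holomorphy, so that $w_j$ is defined and lies in $\ker A_j$.
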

\begin{Theorem}\label{thm:IYLpart}
In addition to the conditions 
$(${\rm i}$)$, $(${\rm ii}$)$, $(${\rm iii}$)$, 
$(${\rm iv}$)$ above,
suppose that the following condition is satisfied. 
\begin{itemize}
\setlength{\itemsep}{-2pt}
\item[$({\rm v})$] $A_\infty-\mu$ has 
non-integer eigenvalues 
$\alpha^{(\infty)}_1-\mu,\ldots,\alpha^{(\infty)}_l-\mu$ 
where 
$l={\rm rank}\,(A_\infty-\mu)$.
\end{itemize}
Using $Q_0$ and $\cS_0$ as in \eqref{Q0S0A} and 
\eqref{Q0S0} respectively, set 
$\widehat{Z}(x)=Q_0\widetilde{Z}(x) \cS_0$.  
Then $\widehat{Z}(x)$ 
is a fundamental solution matrix of the Schlesinger system corresponding to 
the middle convolution \eqref{eq:L-part},
and the monodromy matrices for $\widehat{Z}(x)$ are 
given by the middle convolution 
${\rm MC}_\lambda(\bs{M})=\widehat{\bs{N}}$. 
\end{Theorem}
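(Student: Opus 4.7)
The plan is to proceed in three steps, paralleling the proof of Theorem \ref{thm:IYKpart}. First, I will verify that $\widehat{Z}(x)=Q_0\widetilde{Z}(x)\cS_0$ satisfies the $L$-reduced Schlesinger system \eqref{eq:L-part}. The identity $\widetilde{B}_k=E_k\widetilde{B}=E_kP_0Q_0$ immediately yields the intertwining relation $Q_0\widetilde{B}_k=(Q_0E_kP_0)Q_0=\widehat{B}_kQ_0$; combining this with the ODE for $\widetilde{Z}(x)$ provided by Theorem \ref{thm:IYKpart} and right-multiplying by the constant matrix $\cS_0$ gives the required differential equation for $\widehat{Z}(x)$.

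Second, I will identify the monodromy matrices. By Theorem \ref{thm:IYKpart},
\[
\gamma_k\cdot\widehat{Z}(x) \;=\; Q_0(\gamma_k\cdot\widetilde{Z}(x))\cS_0 \;=\; Q_0\widetilde{Z}(x)\widetilde{N}_k\cS_0,
\]
and matching this with $\widehat{Z}(x)\widehat{N}_k=Q_0\widetilde{Z}(x)\cS_0\cQ_0\widetilde{N}_k\cS_0$ reduces, via the projection $I-\cS_0\cQ_0$ onto $\mathrm{Ker}(\cQ_0)=\mathrm{Ker}(\widetilde{N}_0-1)$, to the key identity
\[
Q_0\,\widetilde{Z}(x)\,v \;=\; 0 \qquad \text{for all } v\in \mathrm{Ker}(\widetilde{N}_0-1).
\]
I intend to establish this identity by invoking condition $({\rm v})$ to identify $\mathrm{Ker}(\widetilde{N}_0-1)$ with $\mathrm{Ker}(\widetilde{B})=\mathrm{Ker}(Q_0)$: the non-zero eigenvalues of $\widetilde{B}_\infty=-\widetilde{B}=-P_0Q_0$ are inherited from those of $A_\infty-\mu$ together with the contribution of $\mu$, and are therefore non-integer, so the generalized $1$-eigenspace of $\widetilde{N}_\infty=\widetilde{N}_0^{-1}$ coincides with the $0$-eigenspace of $\widetilde{B}_\infty$. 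For $v\in\mathrm{Ker}(\widetilde{B})$, each $\widetilde{B}_k v=E_k\widetilde{B}v$ vanishes, so $v$ spans a trivial subrepresentation of the $K$-reduced system; unwinding the Euler-transform construction $\widetilde{Z}=QZ\cS$, together with the corresponding identification of $\mathcal{K}$ on the Schlesinger and monodromy sides, then shows that $\widetilde{Z}(x)v$ takes values in $\mathrm{Ker}(Q_0)$ pointwise, whence $Q_0\widetilde{Z}(x)v=0$.

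Third, to show $\widehat{Z}(x)$ is fundamental, I will apply Liouville's formula $\det\widehat{Z}(x)=c\prod_k(x-t_k)^{\mathrm{tr}(\widehat{B}_k)}$ and verify $c\neq 0$ by inspecting the explicit local asymptotic behavior of $\widehat{Z}$ at $x=\infty$, where condition $({\rm v})$ ensures that the eigenvalues of $\widehat{B}_\infty=-Q_0P_0$ are non-integer and non-degenerate. The main technical obstacle I expect is the verification of the key identity in the second step, specifically the claim that the sub-representation $\mathrm{Ker}(\widetilde{B})$ is genuinely preserved by the fundamental solution $\widetilde{Z}(x)$ coming from the Euler transform. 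This step depends critically on $({\rm v})$: if the non-integer condition on eigenvalues of $A_\infty-\mu$ were relaxed, the two kernels $\mathrm{Ker}(\widetilde{N}_0-1)$ and $\mathrm{Ker}(\widetilde{B})$ could differ, and additional logarithmic contributions at $\infty$ would have to be analyzed separately.
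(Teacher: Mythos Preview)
Your overall architecture is right: Step~1 is routine, and you correctly isolate the crucial identity
\[
Q_0\,\widetilde{Z}(x)\,v=0\qquad\text{for all }v\in\mathrm{Ker}(\cQ_0)=\mathrm{Ker}(\widetilde{N}_0-1),
\]
which simultaneously yields the monodromy identification and (once combined with a rank count) the fundamentality. The gap is in how you propose to prove this identity.

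Your argument rests on the claim that, under $({\rm v})$, the subspaces $\mathrm{Ker}(\widetilde{N}_0-1)$ and $\mathrm{Ker}(\widetilde{B})=\mathrm{Ker}(Q_0)$ of $\mathbb{C}^{\widetilde n}$ are \emph{literally equal}. They are not. One is a monodromy kernel attached to the specific fundamental matrix $\widetilde{Z}(x)$; the other is a kernel of the residue matrix. They are conjugate via the connection matrix $C^{(\infty)}$ relating $\widetilde{Z}(x)$ to the local canonical solution at $x=\infty$, and in general $C^{(\infty)}$ does not fix $\mathrm{Ker}(\widetilde{B})$. Moreover, even if one had $v\in\mathrm{Ker}(\widetilde{B})$, your inference that ``$\widetilde{Z}(x)v$ takes values in $\mathrm{Ker}(Q_0)$'' does not follow: the condition $\widetilde{B}_kv=0$ for all $k$ says only that the constant function $v$ is itself a solution of the $K$-reduced system, which tells you nothing about the unrelated solution $\widetilde{Z}(x)v$. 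You are conflating a constant solution (left action of the differential operator) with a monodromy-invariant line (right action on the column index).

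The paper's route (the exact analogue of its proof of Theorem~\ref{thm:IYKpart}, with $x=t_k$ replaced by $x=\infty$) bypasses this entirely. For $v\in\mathrm{Ker}(\widetilde{N}_0-1)$ one has $\gamma_\infty\!\cdot\!\bigl(Q_0\widetilde{Z}(x)v\bigr)=Q_0\widetilde{Z}(x)\widetilde{N}_0^{-1}v=Q_0\widetilde{Z}(x)v$, so $Q_0\widetilde{Z}(x)v$ is a solution of the $L$-reduced system \eqref{eq:L-part} with \emph{trivial} monodromy at $\infty$. Condition $({\rm v})$ says exactly that $1$ is not an eigenvalue of $\gamma_\infty$ on the solution space of \eqref{eq:L-part} (the eigenvalues of $\widehat{B}_\infty=-Q_0P_0$ are the nonzero $\alpha_j^{(\infty)}-\mu$, all non-integer), forcing $Q_0\widetilde{Z}(x)v=0$. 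This single observation also makes your separate Step~3 unnecessary: extending $\cS_0$ to an invertible $(\cS_0,\cS'_0)$ with $\cS'_0$ mapping into $\mathrm{Ker}(\cQ_0)$, one gets $Q_0\widetilde{Z}(x)(\cS_0,\cS'_0)=(\widehat{Z}(x),0)$, and since $Q_0\widetilde{Z}(x)$ has full row rank $m$, so does $\widehat{Z}(x)$.
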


As we already verified, $QZ(x)$ is a solution matrix of 
the $K$-reduction \eqref{eq:K-part}.  Also, 
under the assumption ${\rm (iv)}$, we have 
${\rm rank}\, A_k={\rm rank}\,(M_k-1)$ for $k=1,\ldots,r$.  
For $\cQ_k$ , $\cS_k$ in \eqref{QS}, we take 
$\cQ'_k$, $\cS'_k$ such that
\begin{equation}
\begin{pmatrix}
\cQ_k \\
\cQ'_k
\end{pmatrix}
\begin{pmatrix}
\cS_k & \cS'_k
\end{pmatrix}=I_n. 
\end{equation}
Since $M_k-1=\cP_k\cQ_k$, we have
\begin{equation}
(M_k-1)
\begin{pmatrix}
\cS_k & \cS'_k
\end{pmatrix}
=(\cP_k,0).
\end{equation}
We consider solution matrix $QZ(x)\widetilde{\cS}$ of \eqref{eq:K-part}
defined by 
\begin{equation}
\widetilde S=
\begin{pmatrix}
\cS & \cS'
\end{pmatrix}
=
\begin{pmatrix}
\cS_1 & & &\cS'_1 & & \\
 & \ddots & & & \ddots & \\
 & & \cS_r & & &\cS'_r
\end{pmatrix}. 
\end{equation}
Noting that 
\begin{equation}
\begin{split}
Z(x)\widetilde{\cS}=I^{\mu}Y(x)\widetilde{\cS}
=(
I^\mu_{L_1}Y(x)\cS_1,\ldots
I^\mu_{L_r}Y(x)\cS_r,
I^\mu_{L_1}Y(x)\cS'_1,\ldots
I^\mu_{L_r}Y(x)\cS'_r
)
\end{split}
\end{equation}
we compute 
the analytic continuation 
by 
$\gamma_k$
as 
\begin{equation}
\gamma_k\!\cdot\!I_{L_j}^{\mu}Y(x)S'_j=
\begin{cases}
I_{L_j}^{\mu}Y(x)S'_j\quad &(j\neq k)\\[2pt]
I_{L_k}^{\mu}Y(x)e(\mu)S'_k\quad &(j=k)
\end{cases}
\end{equation}
by $(M_j-1)\cS'_j=0$. 
Hence we have 
\begin{equation}
\gamma_k\!\cdot\!
QI^{\mu}_{L_k}Y(x)\cS'_k
=QI^{\mu}_{L_k}Y(x)\cS'_k\,e(\mu)\quad(k=1,\ldots,r).
\end{equation}
However, under the assumption ${\rm (iv)}$, 
$e(\mu)$ is {\em not} an eigenvalue of 
$\gamma_k$ on the solution space of 
the $K$-reduction \eqref{eq:K-part}. 
This means that 
$QI^{\mu}_{L_k}Y(x)\cS'_k=0$  $(k=1,\ldots,r)$, namely, 
\begin{equation}
QZ(x)\widetilde{\cS}=(QZ(x)\cS,0).  
\end{equation}
Since ${\rm rank}\,Q=\widetilde{n}$, the matrix 
${\rm rank}\,QZ(x)\widetilde{\cS}=\widetilde{n}$ 
for any regular point $x\in \mathcal{D}$.  
Hence we have ${\rm rank}\, QZ(x)\cS=\widetilde{n}$. 
This completes the proof of Theorem \ref{thm:IYKpart}.  
Theorem \ref{thm:IYLpart} can be proved  in a similar way.

\section{Construction of Okubo systems by middle convolutions}

\subsection{Middle convolution for an Okubo system 
with additions at a singular point}
We consider the operation 
\begin{equation}\label{mcadd}
{\rm add}_{(0,\ldots,\rho,\ldots,0)}\circ {\rm mc}_{-\rho-c}\circ 
{\rm add}_{(0,\ldots,c,\ldots,0)}({\bs A})\qquad(c,\rho\in\mathbb{C})
\end{equation}
for an Okubo system \eqref{eq:Okubo} 
with residue matrices ${\bs A}=(A_1,\ldots,A_r)$, where two additions 
are applied at $x=t_k$ ($k=1,\ldots,r$). 
The Okubo systems of types ${\rm I}$, ${\rm II}$ and ${\rm III}$
are included in the class of the differential systems
obtained from ${\rm (II)}_2$
by a finite iteration of operations in the form \eqref{mcadd}.
We show that the Schlesinger system corresponding to \eqref{mcadd} is 
an Okubo system if the parameters $c$ and $\rho$ are generic in the sense that 
${\rm Ker} (A_k+c)=0$ and ${\rm Ker}(A_{kk}-\rho)=0$. 

\par\medskip
With the notations of \eqref{eq:OkA} and \eqref{eq:Sch}, 
we first decompose the components of 
${\rm add_{0,\ldots,c,\ldots,0}}({\bs A})=
(A_1,\ldots,A_k+c,\ldots,A_r)$ as 
\begin{equation}
\begin{split}
&A_i=P_iQ_i;\quad
P_i=\begin{pmatrix}
0\\
I_{n_i}\\
0
\end{pmatrix},\ \ 
Q_i=\begin{pmatrix}
A_{i1} &\cdots &A_{ir} 
\end{pmatrix},\quad(i\neq k)
\\
&A_k+c=P_kQ_k;\quad P_k=I_n,\ \ Q_k=A_k+c. 
\end{split}
\end{equation} 
For the decomposition of $A-\rho$,  we use the following lemma. 

\begin{Lemma}\label{lem:Xxieta}
Let $X=(X_{ij})_{i,j=1}^{r}\in{\rm Mat}(n;\mathbb{C})$ 
be an $n\times n$ block matrix of type 
$(n_1,\ldots,n_r)$, $n_1+\cdots+n_r=n$, where 
$X_{ij}\in{\rm Mat}(n_i,n_j;\mathbb{C})$ $(i,j=1,\ldots,r)$.  
Suppose that 
${\rm rank}\, X_{kk}=n_k$ and ${\rm rank}\, X\ge n_k$ 
for an index $k=1,\ldots,r$ and set $l={\rm rank}\, X -n_k$.  
Then there exist matrices 
\begin{equation}
\xi_i\in{\rm Mat}(n_i,l;\mathbb{C})\quad(1\le i\le r,\ i\ne k),\quad
\eta_j\in{\rm Mat}(l,n_j;\mathbb{C})\quad(1\le j\le r,\ i\ne k)
\end{equation}
such that 
\begin{equation}\label{eq:Xxieta}
X_{ij}=X_{ik}X_{kk}^{-1}X_{kj}+\xi_i\eta_j\quad(1\le i,j\le r;\ i,j\ne k). 
\end{equation}
Furthermore, the 
$(n-n_k)\times l$ matrix $\xi=(\xi_i)_{1\le i\le r;\,i\ne k}$ and the 
$l\times (n-n_k)$ matrix $\eta=(\eta_j)_{1\le j\le r;\,i\ne k}$ are of maximal rank. 
\end{Lemma}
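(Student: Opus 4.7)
The plan is to reduce the identity \eqref{eq:Xxieta} to a rank factorization of the Schur complement of $X_{kk}$ in $X$. Since $\mathrm{rank}\,X_{kk}=n_k$, the block $X_{kk}$ is invertible, so we may perform the block elementary row and column operations that consist, for each $i\ne k$, in subtracting $X_{ik}X_{kk}^{-1}$ times the $k$-th block row from the $i$-th block row, and then subtracting the $k$-th block column multiplied on the right by $X_{kk}^{-1}X_{kj}$ from the $j$-th block column for each $j\ne k$. These operations leave the rank unchanged and transform $X$ into the block-diagonal matrix whose $(k,k)$-block is $X_{kk}$ and whose $(i,j)$-blocks for $i,j\ne k$ are
\begin{equation}
Y_{ij}:=X_{ij}-X_{ik}X_{kk}^{-1}X_{kj}.
\end{equation}
Consequently, writing $Y=(Y_{ij})_{i,j\ne k}$, we have $\mathrm{rank}\,X=n_k+\mathrm{rank}\,Y$, so that $\mathrm{rank}\,Y=\mathrm{rank}\,X-n_k=l$.

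Next I would apply the standard rank factorization to $Y$: since $Y$ is an $(n-n_k)\times(n-n_k)$ matrix of rank $l$, there exist matrices $\xi\in\mathrm{Mat}(n-n_k,l;\mathbb{C})$ and $\eta\in\mathrm{Mat}(l,n-n_k;\mathbb{C})$, each of rank $l$, such that $Y=\xi\eta$. Partitioning the rows of $\xi$ and the columns of $\eta$ according to the sizes $n_i$ for $i\ne k$, we obtain the blocks
\begin{equation}
\xi=(\xi_i)_{1\le i\le r,\,i\ne k},\quad \xi_i\in\mathrm{Mat}(n_i,l;\mathbb{C}),\qquad \eta=(\eta_j)_{1\le j\le r,\,j\ne k},\quad \eta_j\in\mathrm{Mat}(l,n_j;\mathbb{C}),
\end{equation}
and the identity $Y_{ij}=\xi_i\eta_j$ for $i,j\ne k$ gives precisely \eqref{eq:Xxieta}. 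The maximality of the ranks of $\xi$ and $\eta$ is automatic from the rank factorization.

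There is no substantial obstacle here: the argument is a direct application of two standard facts of linear algebra (the rank formula via the Schur complement and the existence of a rank factorization). The only minor point that should be spelled out in the final write-up is the rank-invariance under the block row and column operations described above, which follows because these operations are realized by multiplication on the left and right by invertible block unipotent matrices.
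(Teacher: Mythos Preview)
Your proof is correct and follows essentially the same approach as the paper: the paper assumes $k=r$ without loss of generality, writes the block decomposition $X=\begin{pmatrix}1&BD^{-1}\\0&1\end{pmatrix}\begin{pmatrix}A-BD^{-1}C&0\\0&D\end{pmatrix}\begin{pmatrix}1&0\\D^{-1}C&1\end{pmatrix}$ with $D=X_{kk}$, reads off $\mathrm{rank}(A-BD^{-1}C)=l$, and then rank-factorizes the Schur complement $A-BD^{-1}C=\xi\eta$. Your block row and column operations by invertible unipotent matrices are exactly this factorization written in different language.
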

\begin{proof}
Without loss of generality we assume $k=r$ and set 
\begin{equation}
A=(X_{ij})_{i,j=1}^{r-1},\quad B=(X_{ir})_{i}^{r-1}, 
C=(X_{rj})_{j=1}^{r-1},\quad D=X_{rr}. 
\end{equation}
Noting that $D$ is invertible, we decompose this matrix into the form 
\begin{equation}
X=
\begin{pmatrix}
A & B\\
C & D
\end{pmatrix}
=
\begin{pmatrix}
1 & BD^{-1}\\
0 & 1
\end{pmatrix}
\begin{pmatrix}
A-BD^{-1}C& 0\\
0 & D
\end{pmatrix}
\begin{pmatrix}
1& 0\\
D^{-1}C & 1
\end{pmatrix}
\end{equation}
Since ${\rm rank}(A-BD^{-1}C)={\rm rank}\ X-n_r=l$, 
there exist matrices $\xi\in{\rm Mat}(n-n_r,l;\mathbb{C})$ 
and 
$\eta\in{\rm Mat}(l,n-n_r;\mathbb{C})$ of rank $l$ such that 
$A-BD^{-1}C=\xi\eta$.   
From $A=BD^{-1}C+\xi\eta$ we obtain
\begin{equation}
X_{ij}=X_{ir}X_{rr}^{-1}X_{rj}+\xi_i\eta_j\qquad(i,j=1,\ldots,r-1)
\end{equation}
where $\xi=(\xi_i)_{i=1}^{r-1}$ and $\eta=(\eta_j)_{j=1}^{r-1}$. 
\end{proof}

If we apply Lemma \ref{lem:Xxieta} to the matrix $A-\rho$ 
with the invertible block $A_{kk}-\rho$, 
then 
formula \eqref{eq:Xxieta} implies the decomposition 
$A-\rho=P_0Q_0$ of $A-\rho$ where 
\begin{equation}
P_0=
\begin{pmatrix}
A_{1k}(A_{kk}-\rho)^{-1}& \xi_{1}\\
\vdots&\vdots\\
A_{k-1,k}(A_{kk}-\rho)^{-1} & \xi_{k-1}\\
 1 & 0\\
A_{k+1,k}(A_{kk}-\rho)^{-1}& \xi_{k+1}\\
\vdots&\vdots\\
A_{rk}(A_{kk}-\rho)^{-1} & \xi_{r}
\end{pmatrix},
\quad
Q_0=\begin{pmatrix}
A_{k1}&\ldots &A_{k,k-1}&A_{kk}-\rho &
A_{k,k+1}&\ldots & A_{kr}\\
\eta_1 &\ldots&\eta_{k-1}& 0 &\eta_{k+1}&\ldots &\eta_r
\end{pmatrix}.
\end{equation}
In this case, 
the linear mapping 
\begin{equation}
\mathbb{C}^{nr}\to\mathbb{C}^{nr}:
\quad
(v_1,\ldots,v_r)^{}\mapsto
(v_1-v_k,\ldots,v_{k-1}-v_k,v_k,v_{k+1}-v_k,\ldots,v_r-v_k)
\end{equation}
induces the isomorphism from 
the quotient space 
$\mathbb{C}^{nr}/\big((\bigoplus_{p\neq k}{\rm Ker}A_p\oplus{\rm Ker}(A_k+c))
\oplus{\rm Ker}B\big)$ 
for the middle convolution to 
\begin{equation}
(\mathbb{C}^n/{\rm Ker}A_1)\oplus\cdots\oplus (\mathbb{C}^n/{\rm Ker}A_{k-1})
\oplus(\mathbb{C}^n/{\rm Ker}(A-\rho))\oplus
(\mathbb{C}^n/{\rm Ker}A_{k+1})\oplus\cdots\oplus 
(\mathbb{C}^n/{\rm Ker}A_{r}).
\end{equation}
Therefore we redefine $Q$ as
\begin{equation}
\arraycolsep=2pt 
Q=\begin{pmatrix}
\,Q_1 &&-Q_1\\[-4pt]
& \ddots&\vdots \\[-4pt]
& & Q_0 \\[-4pt]
& & \vdots& \ddots\\[-4pt]
&&-Q_r&& Q_r\,
\end{pmatrix}. 
\end{equation}
Then,  from the convolution 
\begin{equation}\label{eq:BZ}
(x-T)\frac{d\ }{dx}Z=BZ;\quad
B=(B_{ij})_{i,j=1}^{r},\quad 
B_{ij}=A_j+\delta_{jk}c-\delta_{ij}(\rho+c), 
\end{equation}
by the transformation $\widehat{Z}=QZ$
we obtain the Schlesinger system 
\begin{equation}\label{mcSch}
\frac{d}{dx}\widehat Z=
\sum_{l=1}^r\frac{\widehat B_l+\rho\delta_{kl}}{x-t_l}\widehat Z, 
\qquad \widehat{B}_lQ = QB_l\ \ (l=1,\ldots,r)
\end{equation}
corresponding to the operation \eqref{mcadd}, 
where 
\begin{equation}
\arraycolsep=2pt
\widehat B_l=
\begin{pmatrix}
 & &{\large O}& &  \\[4pt]
A_{l1} & \cdots& Q_lP_0 & \cdots & A_{lr} \\[4pt]
 & &{\large O}& & \\
\end{pmatrix}
-(\rho +c)E_{ll}
\ \  (l\neq k),\quad
\widehat B_k=
\begin{pmatrix}
-Q_1 \\[-3pt]
\vdots \\[-2pt]
Q_0 \\[-3pt]
\vdots \\[-2pt]
-Q_r
\end{pmatrix}
\begin{pmatrix}
P_1 & \cdots P_0 &\cdots & P_r
\end{pmatrix}
\end{equation}
with $Q_lP_0$ in the $(l,k)$ block of $\widehat{B}_l$. 

Then the Schlesinger system \eqref{mcSch} is transformed into the Okubo system
\begin{equation}\label{eq:mcadd}
(x-T)\frac{d}{dx}W=A^{mc}W,\qquad
A^{mc}={\rm Ad}(G)(\widehat B_{1}+\cdots +\widehat B_r+\rho)
\end{equation}
for $W=G\widehat{Z}$, 
where 
\begin{equation}
A^{mc}=\begin{pmatrix}
& A_{1k}(A_{kk}+c)(A_{kk}-\rho)^{-1} &(\rho+c)\xi_{1} &&\\[-3pt]
\ A_{ij}-(\rho+c)\delta_{ij}\ & \vdots & \vdots & A_{ij} \\[-3pt]
& A_{k-1,k}(A_{kk}+c)(A_{kk}-\rho)^{-1} &(\rho+c)\xi_{k-1} &&\\[4pt]
A_{k1}\ \ \ldots\ \ A_{k,k-1}  & A_{kk} & 0 & A_{k,k+1}\ \ \ldots \ \ A_{kr} \\[2pt]
\eta_{1}\ \ \ \ldots\ \ \ \eta_{k-1}  
& 0 & \rho & \eta_{k+1}\ \ \ \ldots\ \ \ \eta_{r} \\[2pt]
& A_{k+1,k}(A_{kk}+c)(A_{kk}-\rho)^{-1} & (\rho+c)\xi_{k+1}& \\[-4pt]
A_{ij} & \vdots & \vdots & A_{ij}-(\rho+c)\delta_{ij}\\[-3pt]
& A_{rk}(A_{kk}+c)(A_{kk}-\rho)^{-1} & (\rho+c)\xi_{r}& 
\end{pmatrix}
\end{equation}
and 
\begin{equation}
G=
\begin{pmatrix}
1 &&& A_{1k}(A_{kk}-\rho)^{-1}& \xi_1\\[-4pt]
 &\ddots &&\vdots&\vdots\\[-4pt]
&&1& A_{k-1k}(A_{kk}-\rho)^{-1}& \xi_{k-1}\\[2pt]
 &&&1 & 0\\
 &&&0 & 1\\
&&& A_{k+1k}(A_{kk}-\rho)^{-1}& \xi_{k+1}&1\\[-4pt]
 &&&\vdots&\vdots& &\ddots\\[-4pt]
 &&& A_{rk}(A_{kk}-\rho)^{-1}& \xi_{r}&&&1
\end{pmatrix}. 
\end{equation}

We remark that the transformation from the convolution \eqref{eq:BZ} 
to the Okubo system \eqref{eq:mcadd} is 
given by $W=GQZ$: 
\begin{equation}
GQ=\begin{pmatrix}
Q_1 &  & &(-\rho I_{n_1},\,0) & & & \\
 &\ddots & & \vdots& & & \\
 & &Q_{k-1}&(0,\,-\rho I_{n_{k-1}},\,0)   & & & \\
 & & & Q_0& &  &  \\
 & & & (0,\,-\rho I_{n_{k+1}},\,0) &Q_{k+1}&  & \\
 & & & \vdots && \ddots & \\
 & & &(0,\,-\rho I_{n_r})& &  & Q_r
\end{pmatrix}. 
\end{equation} 

\begin{Lemma}\label{lem:mcadd}
If the Okubo system \eqref{eq:Okubo} 
satisfies the conditions ${\rm Ker}(A_k+c)=0$ and ${\rm Ker}(A_{kk}-\rho)=0$ 
for an index $k=1,\ldots,r$, then  
the Schlesinger system with residue matrices \eqref{mcadd} is 
equivalent to the Okubo system \eqref{eq:mcadd}.
\end{Lemma}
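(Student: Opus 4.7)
The plan is to verify the chain of transformations sketched in the paragraphs immediately preceding the lemma, checking that the two hypotheses are exactly what is needed to make each step go through.

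First, I would verify that all required rank decompositions exist. The condition ${\rm Ker}(A_k+c)=0$ says $A_k+c$ is invertible on $\mathbb{C}^n$, so one may legitimately take $P_k=I_n$, $Q_k=A_k+c$ as a rank-$n$ decomposition of the $k$-th residue after the first addition. For $i\ne k$, the Okubo structure gives the standard decomposition $A_i=P_iQ_i$ with $P_i$ a block column of zeros and an identity. The condition ${\rm Ker}(A_{kk}-\rho)=0$ says the $(k,k)$-block of $A-\rho$ is invertible on $\mathbb{C}^{n_k}$, which is exactly the hypothesis required to apply Lemma \ref{lem:Xxieta} to $X=A-\rho$; this produces the decomposition $A-\rho=P_0Q_0$ together with the auxiliary matrices $\xi_i,\eta_j$ in the explicit form displayed in the text.

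Second, I would trace through ${\rm mc}_{-\rho-c}\circ {\rm add}_{(0,\ldots,c,\ldots,0)}({\bs A})$. The convolution step gives the Okubo system \eqref{eq:BZ} directly from the definition. For the K- and L-reductions, rather than executing them sequentially it is more efficient to combine them into the single block matrix $Q$ exhibited in the text: its $i$-th block row ($i\ne k$) comes from $Q_i$ (the K-reduction data) while its $k$-th block row comes from $Q_0$ (the L-reduction data coming from the decomposition of $A-\rho$, which plays the role of $\widetilde B$ under the combined reduction). A direct block-by-block computation verifies the intertwining identities $\widehat B_l Q=Q B_l$ for $l=1,\ldots,r$, giving the Schlesinger system \eqref{mcSch}. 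The final addition at $t_k$ simply shifts the $k$-th residue by $\rho$, producing the residues in \eqref{mcadd}.

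Third, I would verify that conjugation by the explicit block upper-triangular matrix $G$ converts \eqref{mcSch} into the Okubo system \eqref{eq:mcadd}. The composite identity for $GQ$ displayed at the end of the text is the key: it shows that the gauge transformation $W=G\widehat Z=(GQ)Z$ sends the convolution variable $Z$ of \eqref{eq:BZ} directly into Okubo form, bypassing the intermediate presentation \eqref{mcSch}. Once this identity is checked, a direct calculation of ${\rm Ad}(G)(\widehat B_1+\cdots+\widehat B_r+\rho)$ yields the explicit matrix $A^{mc}$.

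The main obstacle is the block-matrix algebra in Steps 2 and 3. In particular, the off-diagonal entries $A_{ik}(A_{kk}+c)(A_{kk}-\rho)^{-1}$ and $(\rho+c)\xi_i$ (and their $\eta_j$-counterparts in the $k$-th row) must be shown to arise correctly from composing convolution, reduction, and conjugation; the hypothesis ${\rm Ker}(A_{kk}-\rho)=0$ is used precisely where the inverse $(A_{kk}-\rho)^{-1}$ appears, and the hypothesis ${\rm Ker}(A_k+c)=0$ is used to ensure that the K-reduction does not lose any components coming from the $k$-th block. Once these verifications are in hand, the remaining assertions of the lemma follow from routine matrix manipulations.
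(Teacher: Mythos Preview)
Your proposal is correct and follows essentially the same approach as the paper: the lemma is a summary of the construction spelled out in the paragraphs preceding it, and your three steps (rank decompositions using the two hypotheses, the combined $K$/$L$-reduction via the modified block matrix $Q$, and conjugation by $G$) match the paper's argument point by point. The paper likewise justifies the combined reduction matrix $Q$ via the explicit isomorphism of quotient spaces induced by $(v_1,\ldots,v_r)\mapsto(v_1-v_k,\ldots,v_k,\ldots,v_r-v_k)$, which is the conceptual content behind your remark that the $k$-th block row of $Q$ carries the $L$-reduction data $Q_0$ while the others carry the $K$-reduction data $Q_i$.
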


\subsection{Middle convolution for  monodromy matrices of Okubo type}
Let $\Psi(x)$ be the canonical solution matrix of the Okubo system
\eqref{eq:Okubo}. The analytic continuation $\Psi(x)$ by $\gamma_k$ 
$(k=1,\ldots,r)$ gives an $r$-tuple of monodromy matrices 
$\bs{M}$ as defined in Section $1.2$.  
Then $\bs{M}$  is {\em of Okubo type} in the sense that 
$M_i$ are expressed as
\begin{equation}
M_i=\begin{pmatrix}
1 & & & & \\
 &\ddots & & & \\
M_{i1} &\cdots &M_{ii} & \cdots &M_{ir} \\
 & & &\ddots & \\
 & & & & 1
\end{pmatrix} 
\qquad(i=1,\ldots,r)
\end{equation}
in terms of matices $M_{ij}\in {\rm Mat}(n_i,n_j;\mathbb{C})$ 
($1\le i,j\le r$).
We show that the operation 
\begin{equation}\label{MCadd}
{\rm Add}_{(1,\ldots,\lambda^{-1},\ldots,1)}\circ {\rm MC}_{\lambda s^{-1}}
\circ {\rm Add}_{(1,\ldots,s,\ldots,1)}(\bs{M})
\end{equation}
gives rises to monodromy matrices of Okubo type 
as in the case of Okubo system 
\eqref{eq:Okubo}.  
We decompose $M_i-1,\,(i=1,\ldots,r)$ and take $\cS_i$ as
\begin{equation}
M_i-1=\cP_i\cQ_i=
\begin{pmatrix}
0\\
I_{n_i}\\
0
\end{pmatrix}
\begin{pmatrix}
M_{i1} &\cdots M_{ii}-1 &\cdots & M_{ir}
\end{pmatrix},\quad 
\cS_{i}=
\begin{pmatrix}
0\\
(M_{ii}-1)^{-1}\\
0
\end{pmatrix}.
\end{equation}
Also, setting $M_0=\lambda M_1\cdots M_r$, 
we take the decomposition 
\begin{equation}
\begin{split}
M_0^{(k)}-1 
&=\cP_0^{(k)}\cQ_0^{(k)}
\\
&=\begin{pmatrix}
M_{1k}^{(k)}(M_{kk}^{(k)}-1)^{-1} & \xi_1 \\
\vdots &\vdots \\
M_{k-1,k}^{(k)}(M_{kk}^{(k)}-1)^{-1} & \xi_{k-1} \\
1 & 0\\
M_{k+1,k}^{(k)}(M_{kk}^{(k)}-1)^{-1} & \xi_{k+1} \\
\vdots &\vdots \\
M_{rk}^{(k)}(M_{kk}^{(k)}-1)^{-1} & \xi_{r} 
\end{pmatrix}
\begin{pmatrix}
M_{k1}^{(k)} &\cdots &M_{k,k-1}^{(k)} & M_{kk}^{(k)}-1 &
M_{k,k+1}^{(k)} &\cdots &M_{k,r}^{(k)} \\
\eta_1 &\cdots&\eta_{k-1}& 0&\eta_{k+1}&\cdots &\eta_r
\end{pmatrix}
\end{split}
\end{equation} 
for
\begin{equation}
M_0^{(k)}={\rm Ad}(M_{k+1}\cdots M_r)(M_0)
=M_{k+1}\cdots M_r M_1\cdots M_{k}
=(M^{(k)}_{ij})_{i,j=1}^{r}
\end{equation}
as in Lemma \ref{lem:Xxieta}, 
and define 
$\cP_0=( M_{k+1}\cdots M_r)^{-1}\cP_0^{(k)}$, 
$\cQ_0=\cQ_0^{(k)}(M_{k+1}\cdots M_r)$,
$\cS_0$
so that $M_0-1=\cP_0\cQ_0$, $\cQ_0\cS_0=I_{n_0}$. 
As in the case of the differential system, 
the linear mapping 
$\mathbb{C}^{nr}\to\mathbb{C}^{nr}:$
\begin{equation}
\small
(v_1,\ldots,v_r)^{}\mapsto
(v_1-sM_2\cdots v_k,\ldots,v_{k-1}-sM_kv_k,v_k,v_{k+1}-M_{k+1}^{-1}v_k,
\ldots,v_r-(M_{k+1}\cdots M_r)^{-1}v_k)
\end{equation}
induces the isomorphism from 
the quotient space 
$\mathbb{C}^{nr}/\big((\bigoplus_{p\neq k}{\rm Ker}(M_{p}-1)
\oplus{\rm Ker}(sM_k-1))
\oplus{\rm Ker}(N_0-1)\big)$ 
for the middle convolution to 
\begin{equation}
\begin{split}
&(\mathbb{C}^n/{\rm Ker}(M_1-1))
\oplus\cdots\oplus (\mathbb{C}^n/{\rm Ker}(M_{k-1}-1))
\\
&
\oplus(\mathbb{C}^n/{\rm Ker}(M_0^{(k)}-1))
\oplus
(\mathbb{C}^n/{\rm Ker}(M_{k+1}-1))\oplus\cdots\oplus 
(\mathbb{C}^n/{\rm Ker}(M_r-1)).
\end{split}
\end{equation}
Therefore, taking block matrices
\begin{equation}
{\footnotesize
\arraycolsep=1pt
\cQ=\begin{pmatrix}
\cQ_1 & & & -s\cQ_1 M_{2}\cdots M_{k} & & &\\[-4pt]
 &\ddots && \vdots & & &\\[-4pt]
 & &\cQ_{k-1} & s\cQ_{k-1} M_k & &  &\\
 & & &\cQ_0(M_{k+1}\cdots M_r)^{-1} & & &\\
 & & & \cQ_{k+1}M_{k+1}^{-1} &\cQ_{k+1} & & \\[-4pt]
 & & & \vdots & &\ddots & \\[-4pt]
 & & & \cQ_r(M_{k+1}\cdots M_r)^{-1} & & &\cQ_r
\end{pmatrix}, 
\quad
\cS=
\begin{pmatrix}
\cS_1 & && sM_{2}\cdots M_r\cS_0 & & \\[-4pt]
 &\ddots &&\vdots & & \\[-4pt]
&&\cS_{k-1}&s M_k\cdots M_r\cS_0&\\
 & &&M_{k+1}\cdots M_r\cS_0 & & \\
 & &&M_{k+2}\cdots M_r\cS_0 & \cS_{k+1}& \\[-4pt]
 & && \vdots && \ddots & \\[-4pt]
 & && \cS_0 & && \cS_r
\end{pmatrix}},
\end{equation}
we obtain the monodromy matrices 
$(\widehat{N}_1,\ldots ,\widehat{N}_r)=(\cQ N_1\cS,\ldots,\cQ N_r \cS)$
corresponding to 
${\rm MC}_{\lambda s^{-1}}\circ {\rm Add}_{(1,\ldots,s,\ldots,1)}(\bs{M})$ as
\begin{equation}
\arraycolsep=1pt
\widehat N_{i}=
\begin{pmatrix}
1 & & & & & &\\[-4pt]
 &  \ddots & & & &\\
\lambda s^{-1}M_{i1} &\cdots  
&\lambda s^{-1}M_{ii} & \cdots &M_{ir}\\[-4pt]
&&&\ddots && \\
&&&&1
\end{pmatrix}\quad(i\ne k)
\end{equation}
with the $(i,k)$-block replaced by $\cQ_i\cP_0$, and 
\begin{equation}
\widehat N_k-1=
\begin{pmatrix}
-s\cQ_1M_2\cdots M_k \\
\vdots \\
-s\cQ_{k-1}M_k\\
\cQ_0(M_{k+1}\cdots M_r)^{-1}\\
-\cQ_{k+1}M_{k+1}^{-1}\\
\vdots \\
-\cQ_{r}(M_{k+1}\cdots M_r)^{-1}
\end{pmatrix}
\begin{pmatrix}
\lambda s^{-1}\cP_1 &\cdots &\cP_0 &\cdots &\cP_r
\end{pmatrix}.
\end{equation}
We remark here 
that the $i$-th rows of $M_0^{(k)}$ and $(M_{0}^{(k)})^{-1}$ 
are given respectively as follows: 
\begin{equation}\label{M0k}
\begin{split}
(M_{0}^{(k)})_{i}&=\begin{cases}
(\lambda M_{i}\cdots M_rM_{1}\cdots M_{k})_{i}\quad 
&(k+1 \leq i\leq r) \\[2pt]
( \lambda M_{i}\cdots M_{k})_{i}\quad 
&(1\leq i\leq k),
\end{cases}\\[2pt]
(M_{0}^{(k)})^{-1}_i&=\begin{cases}
( \lambda^{-1}M_{i}^{-1}\cdots M_{k+1}^{-1})_{i}\quad 
&(k+1\leq i\leq r) \\[2pt]
( \lambda^{-1} M_{i}^{-1}\cdots M_{1}^{-1}M_{r}^{-1}\cdots
 M_{k+1}^{-1})_{i}\ \ 
&(1\leq i\leq k).
\end{cases}
\end{split}
\end{equation}
Using these relations \eqref{M0k}, we rewrite $\widehat N_k-1$ as 
\begin{equation}
\widehat N_k-1=
\begin{pmatrix}
-s\big(\lambda^{-1}M_{ij}^{(k)}-\delta_{ij}\big)
_{i=1,\ldots,k-1\atop j=1,\ldots,r\hfill}
\\
\cQ_0^{(k)}\\[2pt]
-\big(\delta_{ij}-\lambda \widetilde{M}_{ij}^{(k)}\big)
_{i=k+1,\ldots,r\atop j=1\ldots r\hfill}
\end{pmatrix}
\begin{pmatrix}
\lambda s^{-1}\cP_1 &\cdots &\cP_0 &\cdots &\cP_r
\end{pmatrix},
\end{equation}
where 
$(M_0^{(k)})^{-1}=(\widetilde{M}^{(k)}_{ij})_{i,j=1}^r$. 
Noting that $M_{ij}^{(k)}-\delta_{ij}-
M_{ik}^{(k)}(M_{kk}^{(k)}-1)^{-1}M_{kj}^{(k)}=\xi_i\eta_j$,
we set 
\begin{equation}
\cX=\begin{pmatrix}
1 & & &
\lambda^{-1}s(\cP_0^{(k)})_{1}
 \\[-2pt]
 &\ddots & &
\vdots
 \\[-2pt]
 & &1 &
\lambda^{-1}s(\cP_0^{(k)})_{k-1}\\[2pt]
 & && 1 \\
&&
&\lambda
(M_0^{(k)})^{-1}_{k+1}
\cP_0^{(k)} & 1
\\[-2pt]
&&&
\vdots & &\ddots 
\\[-2pt]
&&
&\lambda
(M_0^{(k)})^{-1}_{r}
\cP_0^{(k)} & & &1
\end{pmatrix},
\end{equation}
where $(\cP_0^{(k)})_i$ stands for the $i$-th row of $\cP_0^{(k)}$. 
Using this matrix $\cX$ 
for the conjugation, from 
$\widehat{\bs{N}}=(\widehat{N}_1,\ldots,\widehat{N}_r)$ 
we obtain the following monodromy matrices of Okubo type 
$\bs{M}^{mc}=(M_1^{mc},\ldots,M_r^{mc})
={\rm Add}_{(1,\ldots,\lambda^{-1},\ldots,1)}({\rm Ad}(\cX)\widehat{\bs{N}})$:
\begin{equation}\label{Mon:MCadd}
\begin{split}
M_{i}^{mc}&=
\begin{pmatrix}
1&&&&&&\\[-4pt]
&&\ddots&&&& \\
\lambda s^{-1} M_{i1}& 
& \cdots &\lambda s^{-1}M_{ii}&\cdots &
\begin{pmatrix}
M_{i(k1)}^{mc} & M_{i(k2)}^{mc}
\end{pmatrix} 
\cdots &M_{ir}\\
&&&&\ddots&&\\[-4pt]
&&&&&&1
\end{pmatrix}\,(i<k)\\
M_{i}^{mc}&=
\begin{pmatrix}
1&&&&&&\\[-4pt]
&&\ddots&&&& \\
\lambda s^{-1}M_{i1} 
\cdots &
\begin{pmatrix}
M_{i(k1)}^{mc} & M_{i(k2)}^{mc}
\end{pmatrix} 
& \cdots 
 &\lambda s^{-1}M_{ii}&
 & \cdots&M_{ir}\\
&&&&&\ddots&\\[-4pt]
&&&&&&1
\end{pmatrix}\,(k<i)\\
M_k^{mc}&=
\begin{pmatrix}
1 &&&&&\\
&\ddots&&&&\\
\lambda s^{-1}M_{k1} & \cdots & M_{kk} & 0 & \cdots & M_{kr}\\
s^{-1}\eta_{1} & \cdots &0 &\lambda^{-1} & \cdots &\lambda^{-1}\eta_{r}\\
&&&&\ddots&\\
&&&&&1
\end{pmatrix}
\end{split}
\end{equation}
where $M_{i(k1)}^{mc}$ and $M_{i(k2)}^{mc}$ 
are computed as follows: 
\begin{equation}\label{*12}
\begin{split}
M_{i(k1)}^{mc}
&=M_{ik}+(1-\lambda^{-1}s)(\sum_{j=i+1}^{k-1}
M_{ij}M^{(k)}_{jr}(M^{(k)}_{kk}-1)^{-1}-M^{(k)}_{ir}(M^{(k)}_{kk}-1)^{-1})\\
&=M_{ik}-(1-\lambda^{-1}s )M_{ik}M^{(k)}_{kk}(M^{(k)}_{kk}-1)^{-1}\\
&=\lambda^{-1} sM_{ik}(M^{(k)}_{kk}-\lambda s^{-1})(M^{(k)}_{kk}-1)^{-1},\\
M_{i(k2)}^{mc}
&=(1-\lambda^{-1}s )
(\sum_{j=i+1}^{k-1}M_{ij}\xi_{j}-\xi_{i})\\
\end{split}
\end{equation}
for $i<k$, and 
\begin{equation}
\begin{split}
M_{i(k1)}^{mc}
&=M_{ik}-\lambda(1-\lambda s^{-1})(\sum_{j=k+1}^i
M_{ij}\widetilde M^{(k)}_{jr}(M^{(k)}_{kk}-1)^{-1})\\
&=M_{ik}+\lambda(1-\lambda s^{-1})M_{ik}(M^{(k)}_{kk}-1)^{-1}\\
&=M_{ik}(M^{(k)}_{kk}-\lambda s^{-1})(M^{(k)}_{kk}-1)^{-1},
\\
M_{i(k2)}^{mc}
&=\lambda(1-\lambda s^{-1})\sum_{j=k+1}^{i}M_{ij}
(\sum_{p=1,p\neq k}^{r}\widetilde M^{(k)}_{jp}\xi_{p}).
\end{split}
\end{equation}
for $i>k$.  
Note that 
$\bs{M}^{mc}$ is the $r$-tuple of monodromy matrices for the 
fundamental solution matrix 
$Y^{mc}(x)=GQ(x-t_k)^{\rho}Z(x)\cS\cX^{-1}$, 
where $Z(x)=I^{-\rho-c}((x-t_k)^c \Psi(x))$. 
Therefore we need to specify the right inverse
$\cS\cX^{-1}$ of $\cX\cQ$ to solve the connection problem.
 In this case the matrices $\cX\cQ$ and
$\cS\cX^{-1}$ are given as
\begin{equation}
\cX\cQ=\begin{pmatrix}
(\cQ_i\delta_{ij})_{i,j=1}^{k-1}&
(s(1-\lambda^{-1})I_{n_i}\delta_{ij})
_{i=1,\ldots,k-1\atop j=1,\ldots,r\hfill}
&0\\
0 &Q_0^{(k)} &0\\
0 & ((\lambda-1)\delta_{ij})_{i=k+1,\ldots,r\atop j=1,\ldots,r\hfill}
&(\cQ_i\delta_{ij})_{i,j=k+1}^r
\end{pmatrix},
\end{equation}
\begin{equation}
\cS\cX^{-1}=\begin{pmatrix}
\cS_1&&&(s(\lambda^{-1}-1)\cS_1,0)\cS_{0}^{(k)}&&&\\
&\ddots&&\vdots&&&\\
&&\cS_{k-1}&(0,s(\lambda^{-1}-1)\cS_{k-1},0)\cS_{0}^{(k)}&&&\\
&&&\cS_{0}^{(k)}&&&\\
&&&(0,(1-\lambda)\cS_{k+1},0)\cS_{0}^{(k)}&\cS_{k+1}&&\\
&&&\vdots&&\ddots&\\
&&&(0,(1-\lambda)\cS_r)\cS_{0}^{(k)}&&&\cS_r
\end{pmatrix}. 
\end{equation}
We remark that one can take $\cS_0^{(k)}$ in the form 
\begin{equation}
\cS_0^{(k)}=\begin{pmatrix}
0 & \widetilde \eta_{1}\\
\vdots & \vdots \\
0 & \widetilde \eta_{k-1}\\
(M_{kk}^{(k)}-1)^{-1}& 0 \\
0 & \widetilde \eta_{k+1}\\
\vdots & \vdots \\
0 & \widetilde \eta_{r}
\end{pmatrix}. 
\end{equation}
Summarizing these arguments, we conclude: 
\begin{Proposition}\label{Prop:MCadd}
The operation \eqref{MCadd} gives rise to the $r$-tuple of monodromy matrices 
of Okubo type
$\bs{M}^{mc}$ as in \eqref{Mon:MCadd}, \eqref{*12}.
Furthermore $\bs{M}^{mc}$ is realized as the monodromy matrices 
for the fundamental solution matrix $Y^{mc}(x)=GQ(x-t_k)^{\rho}Z(x)\cS\cX^{-1}$, where 
$Z(x)=I^{-\rho-c}((x-t_k)^{c}\Psi(x))$. 
\end{Proposition}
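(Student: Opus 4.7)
The plan is to build the fundamental solution matrix $Y^{mc}(x)$ by executing the three factors of \eqref{MCadd} in order on the canonical solution matrix $\Psi(x)$, and at each stage to identify the resulting monodromy via the addition rule together with Theorems \ref{thm:IY}, \ref{thm:IYKpart} and \ref{thm:IYLpart}. First, the multiplication $\Psi(x)\mapsto (x-t_k)^{c}\Psi(x)$ realizes the inner ${\rm Add}_{(1,\ldots,s,\ldots,1)}$ with $s=e(c)$. Second, the Euler transform $Z(x)=I^{-\rho-c}((x-t_k)^{c}\Psi(x))$, whose parameter gives $e(-\rho-c)=\lambda s^{-1}$, is by Theorem \ref{thm:IY} a fundamental solution matrix of the convolved Okubo system \eqref{eq:BZ} and carries the monodromy ${\rm C}_{\lambda s^{-1}}\circ {\rm Add}_{(1,\ldots,s,\ldots,1)}(\bs M)$.

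For the third step, I would combine the K-reduction and L-reduction into a single conjugation. The decompositions $A_i=P_iQ_i$ for $i\neq k$, $A_k+c=P_kQ_k$, and $A-\rho=P_0Q_0$ (the last one obtained from Lemma \ref{lem:Xxieta} applied to $A-\rho$ with its invertible $(k,k)$-block $A_{kk}-\rho$) determine the block matrix $Q$ encoding the quotient isomorphism of Section 4.1, while the parallel decompositions $M_i-1=\cP_i\cQ_i$ and $M_0^{(k)}-1=\cP_0^{(k)}\cQ_0^{(k)}$ (again via Lemma \ref{lem:Xxieta}) give the matching $\cQ$ and $\cS$. Theorems \ref{thm:IYKpart} and \ref{thm:IYLpart} then assert that $QZ(x)\cS$ is a fundamental solution matrix for the middle convolution whose monodromy is ${\rm MC}_{\lambda s^{-1}}\circ {\rm Add}_{(1,\ldots,s,\ldots,1)}(\bs M)$. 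Finally, the outer factor $(x-t_k)^{\rho}$ implements ${\rm Add}_{(1,\ldots,\lambda^{-1},\ldots,1)}$ since $e(\rho)=\lambda^{-1}$, and left multiplication by $G$ puts the Schlesinger system into the Okubo form \eqref{eq:mcadd} established in Lemma \ref{lem:mcadd}.

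Once $Y^{mc}(x)=GQ(x-t_k)^{\rho}Z(x)\cS\cX^{-1}$ has been identified as a fundamental solution matrix of \eqref{eq:mcadd}, its monodromy is read off from ${\rm MC}_{\lambda s^{-1}}\circ {\rm Add}_{(1,\ldots,s,\ldots,1)}(\bs M)$ by right-conjugating with $\cX$ and multiplying by $\lambda^{-1}$ in the $k$-th slot. The role of $\cX$ is to realign the middle convolution $\widehat{\bs N}$ into Okubo type: for $i\neq k$ the conjugation $\cX\widehat{N}_i\cX^{-1}$ confines the non-trivial entries to the $i$-th block row, and for $i=k$ it exposes the local monodromy $M_{kk}$ together with the diagonal factor $\lambda^{-1}$. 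To legitimize this reduction one must check that $\cS\cX^{-1}$ is a genuine right inverse of $\cX\cQ$; this reduces to choosing $\cS_0^{(k)}$ in the block form displayed just before the Proposition, which is made possible by the particular shape of $\cQ_0^{(k)}$ coming from Lemma \ref{lem:Xxieta}.

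The main obstacle will be the final bookkeeping that produces the compact formulas \eqref{*12} for the $(i,k)$-blocks $M_{i(k1)}^{mc}$ and $M_{i(k2)}^{mc}$. For $i<k$, the telescoping sum appearing in the raw expression must be collapsed to $-M_{ik}M^{(k)}_{kk}(M^{(k)}_{kk}-1)^{-1}$ using the row formula \eqref{M0k} for $M_0^{(k)}$; a short algebraic rearrangement of $M_{ik}(M^{(k)}_{kk}-1)(M^{(k)}_{kk}-1)^{-1}-(1-\lambda^{-1}s)M_{ik}M^{(k)}_{kk}(M^{(k)}_{kk}-1)^{-1}$ then gives the closed form $\lambda^{-1}sM_{ik}(M^{(k)}_{kk}-\lambda s^{-1})(M^{(k)}_{kk}-1)^{-1}$. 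The case $i>k$ is handled analogously with the inverse-row formula in \eqref{M0k}. Together with the parallel collapse of the ``$\xi$-column'' into $M_{i(k2)}^{mc}$, these verifications complete the proof.
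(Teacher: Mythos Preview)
Your proposal is correct and follows essentially the same route as the paper: the proposition is stated there as a summary of the computation carried out in Section~4.2, which proceeds exactly by decomposing $M_i-1$ and $M_0^{(k)}-1$ via Lemma~\ref{lem:Xxieta}, forming the block matrices $\cQ$, $\cS$, computing $\widehat{N}_i=\cQ N_i\cS$, conjugating by $\cX$ to reach Okubo type, and then collapsing the $(i,k)$-blocks using the row formulas \eqref{M0k} to obtain \eqref{*12}. Your identification of the telescoping step and of the need to verify that $\cS\cX^{-1}$ is a right inverse of $\cX\cQ$ (which the paper handles by displaying both matrices explicitly) matches the paper's argument.
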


\subsection{Recurrence relations 
for the connection coefficients}
We consider the 
Okubo system \eqref{eq:mcadd} 
obtained from 
the Okubo system \eqref{eq:Okubo} 
by the operation \eqref{mcadd}. 
Let 
$\Psi^{mc}(x)=(\Psi_1^{mc}(x),\ldots,\Psi_{(k1)}^{mc}(x),\Psi_{(k2)}^{mc}(x),
\ldots,\Psi_{r}^{mc}(x))$ be the canonical
solution matrix of \eqref{eq:mcadd}. Then the monodromy matrices 
$\Gamma_i^{mc}$ $(i=1,\ldots,r)$
corresponding to \eqref{eq:Mk} are represented as 
\begin{equation}
\begin{split}
\Gamma_i^{mc}&=
\begin{pmatrix}
1&&&&\\
&\ddots&&&\\
(e(A_{ii}-\rho-c)-1)C_{i1}^{mc}&\cdots & e(A_{ii}-\rho-c)&
 \cdots & (e(A_{ii}-\rho-c)-1)C_{ir}^{mc}\\
&&&\ddots& \\
&&&&1 
\end{pmatrix}
\quad(i\ne k)
\end{split}
\end{equation}
where $C_{ik}^{mc}=(C_{i(k1)}^{mc},C_{i(k2)}^{mc})$, 
and 
\begin{equation}
\begin{split}
\Gamma_k^{mc}&=
\begin{pmatrix}
1&&&&&\\
&\ddots&&&&\\
(e(A_{kk})-1)C_{(k1)1}^{mc}&\cdots&e(A_{kk})&0&\cdots&(e(A_{kk})-1)C_{(k1)r}^{mc}\\
(e(\rho)-1)C_{(k2)1}^{mc}&\cdots &0&e(\rho)&\cdots
 &(e(\rho)-1)C_{(k2)r}^{mc}\\
&&&&\ddots&\\
&&&&&1
\end{pmatrix}.
\end{split}
\end{equation}
We now investigate the relation between $C_{ij}$ and 
$C_{ij}^{mc},C_{(k1)j}^{mc}$. 
From the forms of \eqref{Mon:MCadd}, the fundamental solution matrix
$Y^{mc}(x)=GQ(x-t_k)^{\rho}Z(x)\cS\cX^{-1}$ is transformed into 
$\Psi^{mc}(x)=Y^{mc}(x)R^{-1}$ 
by some block diagonal matrix 
$R=(R_1,\ldots,R_{(k1)},R_{(k2)},\ldots,R_r)$. 
Therefore we have recurrence relations
\begin{equation}
\begin{split}
C_{ij}^{mc}&=
\begin{cases}
(e(A_{ii}-\rho-c)-1)^{-1}R_i(e(A_{ii})-1)\lambda
 s^{-1}C_{ij}R_j^{-1}\quad &(j<i;\,i,j\neq k)\\
(e(A_{ii}-\rho-c)-1)^{-1}R_i(e(A_{ii})-1)C_{ij}R_j^{-1}\quad
&(i<j;\,i,j\neq k)
\end{cases}\\
C_{i(k1)}^{mc}&=
\begin{cases}
(e(A_{ii}\!-\!\rho\!-\!c)\!-\!1)^{-1}R_{i}(e(A_{ii})\!-\!1)
C_{ik}(e(A_{kk}\!+\!c)\!-\!1)
(e(A_{kk}\!-\!\rho)\!-\!1)^{-1}R_{(k1)}^{-1}
\ &(i<k)\\
(e(A_{ii}\!-\!\rho\!-\!c)\!-\!1)^{-1}R_{i}
(e(A_{ii})\!-\!1)\lambda s^{-1} 
C_{ik}(e(A_{kk}\!+\!c)\!-\!1)
(e(A_{kk}\!-\!\rho)\!-\!1)R_{(k1)}^{-1}
\ &(k<i)
\end{cases}\\
C_{(k1)j}^{mc}&=
\begin{cases}
R_{(k1)}\lambda s^{-1}C_{kj}R_j^{-1}\quad &(j<k)\\
R_{(k1)} C_{kj}R_j^{-1}\quad &(k<j).  
\end{cases}
\end{split}
\end{equation}
The new connection coefficients $C_{(k2)j},C_{i(k2)}$ are 
also given as
\begin{equation}
\begin{split}
C_{(k2)j}^{mc}&=\begin{cases}
R_{(k2)}s^{-1}\eta_jR_j^{-1}\quad &(j<k)\\
R_{(k2)}\lambda^{-1}\eta_jR_j^{-1}&(k<j)
\end{cases}\\
C_{i(k2)}^{mc}&=
\begin{cases}
(e(A_{ii}\!-\!\rho\!-\!c)\!-\!1)^{-1}
R_i(1\!-\!e(\rho\!+\!c))(\sum_{j=i+1}^{k-1}C_{ij}\xi_j-\xi_i)R_{(k2)}^{-1}
\ &(i<k)\\
(e(A_{ii}\!-\!\rho\!-\!c)\!-\!1)^{-1}
R_{i}e(-\rho)(1\!-\!e(-\rho\!-\!c))(\sum_{j=k+1}^rC_{ij}\sum_{p=1,p\neq k}^{r}
\widetilde C_{jp}^{(k)}\xi_p)R_{(k2)}^{-1}\ &(k<i). 
\end{cases}
\end{split}
\end{equation}
Looking at the diagonal blocks of the 
solution matrix $Z(x)$ and 
$GQ,\cS\cX^{-1}$,  we can compute $R_i$ as follows:
\begin{equation}
\begin{split}
R_i&=A_{ii}\lim_{x\to t_i}(x-t_i)^{-A_{ii}+\rho+c}(x-t_k)^{\rho}
\int_{L_i}(x-u)^{-\rho-c}(u-t_k)^c\Psi_i(u)\frac{du}{u-t_i}(e(A_{ii})-1)^{-1}\\
&=A_{ii}(t_i-t_k)^{\rho+c}\widetilde B(A_{ii},-\rho-c+1)(e(A_{ii})-1)^{-1}\\
R_{(k1)}&=(A_{kk}-\rho)\lim_{x\to t_k}(x-t_k)^{-A_{kk}+\rho}
\int_{L_i}(x-u)^{-\rho-c}(u-t_k)^c\Psi_k(u)\frac{du}{u-t_k}
(e(A_{kk}-\rho)-1)^{-1}\\
&=(A_{kk}-\rho)\widetilde B(A_{kk}+c,-\rho-c+1)
(e(A_{kk}-\rho)-1)^{-1}
\end{split}.
\end{equation}
Using these formulas, the recurrence relations for the connection coefficients 
are rewritten as 
\begin{equation}\label{cij}
\begin{split}
C_{ij}^{mc}&=
(e(A_{ii}-\rho-c)-1)^{-1}A_{ii}(t_i-t_k)^{\rho+c}
\widetilde B(A_{ii},-\rho-c+1)(e(-\rho-c))^{\delta(i<j)}
C_{ij}\\
&\quad\cdot (e(A_{jj})-1)(\widetilde B(A_{jj},-\rho-c+1))^{-1}
(t_j-t_k)^{-c-\rho}A_{jj}^{-1}\\
&=
\begin{cases}
\displaystyle
\left(\frac{t_i-t_k}{t_j-t_k}\right)^{\rho+c}
\frac{e(\frac{-1}{2}(\rho+c))\Gamma(\rho+c-A_{ii})}{\Gamma(-A_{ii})}
C_{ij}\frac{\Gamma(A_{jj}-\rho-c+1)}{\Gamma(A_{jj}+1)}
\ &(j<i;\,i,j\neq k)\\[10pt]
\displaystyle
\left(\frac{t_i-t_k}{t_j-t_k}\right)^{\rho+c}
\frac{e(\frac{1}{2}(\rho+c))\Gamma(\rho+c-A_{ii})}{\Gamma(-A_{ii})}
C_{ij}\frac{\Gamma(A_{jj}-\rho-c+1)}{\Gamma(A_{jj}+1)}
\ &(i<j;\,i,j\neq k),
\end{cases}
\end{split}
\end{equation}
where $\delta(i<j)=1$ if $i<j$ and $\delta(i<j)=0$ otherwise. 
Similarly, 
\begin{equation}\label{Ci(k1)}
\begin{split}
C_{i(k1)}^{mc}&=
\begin{cases}
\displaystyle
(t_i-t_k)^{\rho+c}e(\tfrac{1}{2}(\rho+c))
\frac{\Gamma(\rho+c-A_{ii})}{\Gamma(-A_{ii})}
C_{ik}\frac{\Gamma(A_{kk}-\rho)}{\Gamma(A_{kk}+c)}
\quad &(i<k)\\[10pt]
\displaystyle
(t_i-t_k)^{\rho+c}e(\tfrac{-1}{2}(\rho+c))
\frac{\Gamma(\rho+c-A_{ii})}{\Gamma(-A_{ii})}
C_{ik}\frac{\Gamma(A_{kk}-\rho)}{\Gamma(A_{kk}+c)}
\quad &(k<i)
\end{cases}
\end{split}
\end{equation}
\begin{equation}\label{C(k1)j}
\begin{split}
C_{(k1)j}^{mc}
&=
\begin{cases}
\displaystyle
-e(\tfrac{-1}{2}(\rho+c))(t_j-t_k)^{-\rho-c}
\frac{\Gamma(1+\rho-A_{kk})}{\Gamma(1-A_{kk}-c)}C_{kj}
\frac{\Gamma(A_{jj}-\rho-c+1)}{\Gamma(A_{jj}+1)}
\quad &(j<k)\\[10pt]
\displaystyle
-e(\tfrac{1}{2}(\rho+c))(t_j-t_k)^{-\rho-c}
\frac{\Gamma(1+\rho-A_{kk})}{\Gamma(1-A_{kk}-c)}C_{kj}
\frac{\Gamma(A_{jj}-\rho-c+1)}{\Gamma(A_{jj}+1)}
\quad &(k<j)
\end{cases}
\end{split}
\end{equation}
\begin{Theorem}\label{connection}
The connection coefficients $C_{ij}^{mc}$
and $C_{i(k1)}^{mc}$, $C_{(k1)j}^{mc}$ 
$(i,j\ne k)$ 
for the canonical 
solution matrix $\Psi^{mc}(x)$ are 
determined from the connection coefficients $C_{ij}$ 
for $\Psi(x)$ by the recurrence formulas 
\eqref{cij} and \eqref{Ci(k1)}, \eqref{C(k1)j}
respectively. 
\end{Theorem}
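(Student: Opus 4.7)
The plan is to combine Proposition \ref{Prop:MCadd} with an explicit local analysis of the Euler transform at each singular point. By that proposition,
\[
Y^{mc}(x) = G Q\,(x-t_k)^{\rho}\,Z(x)\,\mathcal{S}\mathcal{X}^{-1}, \qquad Z(x) = I^{-\rho-c}\bigl((x-t_k)^{c}\Psi(x)\bigr),
\]
is a fundamental solution matrix of the Okubo system \eqref{eq:mcadd} realizing the monodromy \eqref{Mon:MCadd}. Since the canonical solution matrix $\Psi^{mc}(x)$ of \eqref{eq:mcadd} has the same monodromy as $Y^{mc}(x)$, it differs from $Y^{mc}(x)$ only by right multiplication by a block-diagonal matrix $R = \mathrm{diag}(R_1,\ldots,R_{k-1},R_{(k1)},R_{(k2)},R_{k+1},\ldots,R_r)$, namely $\Psi^{mc}(x) = Y^{mc}(x) R^{-1}$, where each diagonal block is fixed by the normalization $\Psi^{mc}_\ell(x)\sim (x-t_\ell)^{A^{mc}_{\ell\ell}}(\bs{e}_\ell + O(x-t_\ell))$ at the associated singular point. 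Substituting this identity into both the diagonal and off-diagonal blocks of $Y^{mc}(x)$ will yield the stated recurrences.

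First I would compute the diagonal blocks $R_i$ ($i\neq k$) and $R_{(k1)}$. The $(i,i)$-block of $GQ\,Z(x)\,\mathcal{S}$ is, up to explicit factors from $GQ$ and $\mathcal{S}$, the Euler integral
\[
A_{ii}\int_{L_i}(x-u)^{-\rho-c}(u-t_k)^{c}\,\Psi_{i}(u)\,\frac{du}{u-t_i}\,(e(A_{ii})-1)^{-1}.
\]
Substituting $u = (x-t_i)v + t_i$, using the local form $\Psi_i(u)\sim (u-t_i)^{A_{ii}}$, and applying the regularized beta integral identity $\widetilde B(A,\beta) = (e(A)-1)(e(\beta)-1)B(A,\beta)$ yields the expression for $R_i$ as displayed in the excerpt. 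An entirely analogous computation at $x=t_k$, using $\Psi_k(u)\sim (u-t_k)^{A_{kk}}$ so that $(u-t_k)^{c}\Psi_k(u)\sim (u-t_k)^{A_{kk}+c}$, produces the corresponding formula for $R_{(k1)}$.

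Second, I would derive the off-diagonal recurrences. For the $(i,j)$-block of $Z(x)$ with $i,j \neq k$, I substitute the connection relation $\Psi_j(x) = \Psi_i(x) C_{ij} + h(x)$ (with $h$ holomorphic at $x=t_i$) inside the Euler transform $\int_{L_i}(x-u)^{-\rho-c}(u-t_k)^c \Psi_j(u)\,du/(u-t_i)$. The $h$-contribution is holomorphic at $x=t_i$ and is absorbed into the canonical normalization, while the $\Psi_i C_{ij}$-contribution reproduces the diagonal block analysis, now picking up the extra factor $(t_j-t_k)^{c}$ from $(u-t_k)^c$ evaluated near $u=t_j$ and the sign $e(\mp\tfrac{1}{2}(\rho+c))$ coming from the branch of $(x-u)^{-\rho-c}$ as $u$ passes between neighborhoods of $t_j$ and $t_i$. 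Assembling $\Psi^{mc}(x) = Y^{mc}(x)R^{-1}$ yields \eqref{cij}; the same method with $\Psi_k$ playing the role of $\Psi_j$ (resp.\ $\Psi_i$) gives \eqref{Ci(k1)} (resp.\ \eqref{C(k1)j}), the asymmetry between the two being accounted for by the different normalization constants $R_{(k1)}$ versus $R_i$. The final simplification from regularized beta functions to ratios of gamma functions uses $\widetilde B(A,\beta) = (e(A)-1)(e(\beta)-1)\Gamma(A)\Gamma(\beta)\Gamma(A+\beta)^{-1}$.

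The main obstacle is consistent bookkeeping of branches and signs: one must simultaneously track the argument of $(x-u)^{-\rho-c}$ along the double loop $L_i$, the argument of $(u-t_k)^c$ as $u$ ranges over that loop, and the conventions $\theta_j-\pi<\arg(t_i-t_j)<\theta_j$ for $i<j$ and $\theta_j<\arg(t_i-t_j)<\theta_j+\pi$ for $i>j$ fixed in Section 1.1. This is what produces the case distinction $j<i$ versus $i<j$ in \eqref{cij} and the analogous splits in \eqref{Ci(k1)}, \eqref{C(k1)j}; once the argument conventions are pinned down, the remainder is a mechanical gamma-function calculation.
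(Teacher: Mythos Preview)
Your overall framework matches the paper's: you correctly identify $\Psi^{mc}(x)=Y^{mc}(x)R^{-1}$ with a block-diagonal $R$, and your computation of $R_i$ and $R_{(k1)}$ via the local beta-integral expansion of the Euler transform is exactly what the paper does.

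Where you diverge is in the off-diagonal step, and there your description has a gap. The paper does \emph{not} re-analyze the Euler transform to extract $C^{mc}_{ij}$; instead it uses that Proposition~\ref{Prop:MCadd} has already computed the monodromy matrices $M^{mc}_i$ of $Y^{mc}(x)$ explicitly in terms of the old blocks $M_{ij}=(e(A_{ii})-1)C_{ij}$. Since $\Psi^{mc}=Y^{mc}R^{-1}$, one has $\Gamma^{mc}_i=R\,M^{mc}_i\,R^{-1}$, and comparing the $(i,j)$-block of this identity with the canonical form \eqref{eq:Mk} for $\Gamma^{mc}_i$ gives the displayed intermediate formulas for $C^{mc}_{ij}$, $C^{mc}_{i(k1)}$, $C^{mc}_{(k1)j}$ as pure conjugations by $R_i$, $R_j$, $R_{(k1)}$. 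Only then are the explicit $R$-blocks inserted and the regularized beta functions collapsed to gamma ratios. No further contour work is required; the factors $(t_i-t_k)^{\rho+c}$, $(t_j-t_k)^{-\rho-c}$ and the phases $e(\pm\tfrac12(\rho+c))$ come entirely from $R_i$, $R_j$ and from the $\lambda s^{-1}$ prefactors already sitting in \eqref{Mon:MCadd}.

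Your alternative---substituting $\Psi_j=\Psi_iC_{ij}+h$ into the Euler integral---is problematic as written. The $j$-th column block of $Z(x)$ is an integral over $L_j$ (the double loop around $t_j$ and $x$), not $L_i$, whereas the connection relation $\Psi_j=\Psi_iC_{ij}+h$ is a local identity valid only in a neighborhood of $t_i$; you cannot insert it into an integral whose contour stays near $t_j$. In particular your explanation of the factor $(t_j-t_k)^{c}$ as ``$(u-t_k)^c$ evaluated near $u=t_j$'' does not match what actually happens. Making your approach rigorous would require deforming $L_j$ as $x$ is continued toward $t_i$ and tracking the resulting monodromy of the integral---but that is precisely the content of \eqref{eq:gIY=IYN} and hence of Proposition~\ref{Prop:MCadd}, so you would be re-deriving what you already invoked. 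The paper's monodromy-comparison route is both shorter and avoids this circularity.
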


It seems that the matrices 
$R_{(k2)}$ and $C^{mc}_{(k2)}$ 
cannot be written 
as products of gamma functions in general, 
while they can be described as 
the limits 
as $x\to t_k$ of certain explicit integrals. 
In this paper we do not go into the detail of the 
description of $R_{(k2)}$ and $C^{mc}_{(k2)}$, 
since they can be determined by symmetry arguments 
in the context of the Okubo systems in Yokoyama's list
that will be discussed below. 

\section{Connection coefficients 
for the Okubo systems of types ${\rm I}$,\, ${\rm I}^*$,\,
 ${\rm II}$ and ${\rm III}$}

\subsection{Construction of canonical forms of the 
Okubo systems of types ${\rm I}$,\, ${\rm I}^*$,\, ${\rm II}$ and ${\rm III}$}

In what follows, we use the symbols 
 $({\rm I})_n$,\, $({\rm I}^*)_n$,\, $({\rm II})_{2n}$, and $({\rm III})_{2n+1}$
to refer to the corresponding tuples $\bs{A}$ of residue matrices 
as in \eqref{eq:Sch}.
As for the nontrivial eigenvalues, we also use the notations 
$\alpha^{(l)}_i=\alpha_i$, 
$\beta^{(l)}_i=\beta_i$, 
$\rho^{(l)}_i=\rho_i$
in order to specify the rank $l=n,n,2n,2n+1$ of the Okubo system.
\par\medskip
\noindent{\bf Case ${\rm I}$}: 
We first construct $({\rm I})_2=({\rm II})_2$ 
by ${\rm mc}_{\mu_1}(\alpha_1^{(1)},\beta_1^{(1)})$
starting from the differential equation 
\begin{equation}
\frac{dy}{dx}=\left(\frac{\alpha_1^{(1)}}{x-t_1}
+\frac{\beta_1^{(1)}}{x-t_2}\right)y
\end{equation}
of rank $1$.  The resulting system 
is given by
\begin{equation}\label{HGE}
(x-T)\frac{d}{dx}Y=AY;\quad 
A=\begin{pmatrix}
\alpha_1^{(1)}+\mu_1 & \beta_1^{(1)} \\
\alpha_1^{(1)} & \beta_1^{(1)}+\mu_1
\end{pmatrix}
=\begin{pmatrix}
\alpha_1^{(2)} & \beta_1^{(2)}-\rho_1^{(2)}\\
\alpha_1^{(2)}-\rho_1^{(2)} &\beta_1^{(2)}
\end{pmatrix}. 
\end{equation}
The canonical form of type $({\rm I})_2$ is obtained as the conjugation 
of $A$ by the matrix
\begin{equation}
G=\begin{pmatrix}
1& 0\\
0&\beta_1^{(2)}-\rho_1^{(2)}
\end{pmatrix}
\end{equation} 

Starting with $({\rm I})_2$, the Okubo systems 
${\rm (I)}_{n}$ can be constructed inductively 
by the Katz operations
\begin{equation}\label{eq:makeI}
\begin{split}
{\rm (I)}_{n+1}&={\rm add}_{(\rho,0)}\circ {\rm mc}_{-c-\rho} 
\circ {\rm add}_{(c,0)}{\rm (I)}_{n} \quad(n\geq 2). 
\end{split}
\end{equation}

\begin{Lemma}\label{lem:xietaEqI}
For the system \eqref{eq:CI} of type ${\rm (I)}_{n}$, 
$\alpha_n-\rho-K(\alpha-\rho)^{-1}L$ is a matrix of rank 1.  
It is expressed as 
\begin{equation}
\alpha_n-\rho_2-K(\alpha-\rho_2)^{-1}L=\xi\eta
\end{equation}
with $\xi$ and $\eta$ defined by 
\begin{equation}
\xi=-\frac{\prod_{k=1}^{n}(\rho-\rho_k)}{\prod_{k=1}^{n-1}(\rho-\alpha_k)},
\quad \eta=1
\end{equation}
\end{Lemma}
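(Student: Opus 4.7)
The plan is to recognize the asserted identity as a partial-fraction decomposition combined with the Fuchs relation. In type $({\rm I})_n$, the bottom-right block $\alpha_n$ is a scalar and $K,L$ have sizes $(n-1)\times 1$ and $1\times(n-1)$ respectively, so $\alpha_n-\rho-K(\alpha-\rho)^{-1}L$ is simply a $1\times 1$ matrix. The ``rank $1$'' claim therefore reduces to showing that this scalar is nonzero and equals $\xi\cdot\eta$ with the stated factors, and the problem becomes a purely algebraic identity in the parameters.

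First I would introduce the auxiliary rational function
$$
\phi(z)=\frac{\prod_{k=1}^{n}(z-\rho_k)}{\prod_{k=1}^{n-1}(z-\alpha_k)},
$$
which has only simple poles at $z=\alpha_1,\dots,\alpha_{n-1}$ and behaves like $z+O(1)$ as $z\to\infty$. Its partial-fraction decomposition thus takes the shape
$$
\phi(z)=z+b+\sum_{j=1}^{n-1}\frac{r_j}{z-\alpha_j},
$$
whose residues are
$$
r_j=\frac{\prod_{k=1}^n(\alpha_j-\rho_k)}{\prod_{k\neq j,\,1\leq k\leq n-1}(\alpha_j-\alpha_k)}=-L_j.
$$
The constant $b$ is read off from the coefficient of $z^{0}$ in the Laurent expansion of $\phi$ at infinity: it equals $\sum_{k=1}^{n-1}\alpha_k-\sum_{k=1}^{n}\rho_k$, which by the Fuchs relation $\sum_{k=1}^{n}\alpha_k=\sum_{k=1}^{n}\rho_k$ simplifies to $-\alpha_n$. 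Hence $\phi(z)=z-\alpha_n-\sum_{j=1}^{n-1}L_j/(z-\alpha_j)$.

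Substituting $z=\rho$ and using $K_j=1$, I would then compute
$$
K(\alpha-\rho)^{-1}L=\sum_{j=1}^{n-1}\frac{L_j}{\alpha_j-\rho}=-\sum_{j=1}^{n-1}\frac{L_j}{\rho-\alpha_j}=\phi(\rho)-\rho+\alpha_n,
$$
and therefore
$$
\alpha_n-\rho-K(\alpha-\rho)^{-1}L=-\phi(\rho)=-\frac{\prod_{k=1}^{n}(\rho-\rho_k)}{\prod_{k=1}^{n-1}(\rho-\alpha_k)},
$$
which coincides with the stated $\xi\cdot\eta$ since $\eta=1$. The whole argument is routine once the partial-fraction ansatz is set up; the only subtlety worth flagging is the correct identification of the polynomial part of $\phi$ via the Fuchs relation, without which the constant term $-\alpha_n$ would be missing, and the rest is a direct substitution.
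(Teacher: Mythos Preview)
Your argument is correct. You rightly observe that in type $({\rm I})_n$ the second block has size $1$, so the expression in question is a scalar and the factorization $\xi\eta$ is just an evaluation; your partial-fraction identity for the auxiliary function $\phi(z)=\prod_{k=1}^{n}(z-\rho_k)\big/\prod_{k=1}^{n-1}(z-\alpha_k)$, with the Fuchs relation pinning down the constant term $-\alpha_n$, gives exactly the claimed value $-\phi(\rho)$.

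As for comparison: the paper does not actually supply a proof of this lemma. It is stated and then used, and the only hint is the remark after the companion Lemma~\ref{lem:xietaEq} that such identities can be derived ``by the property of Cauchy matrices as in \cite{YHEq}.'' Your partial-fraction computation is a clean, self-contained substitute for that reference; it avoids any appeal to Cauchy-matrix machinery and makes the role of the Fuchs relation completely explicit. The Cauchy-matrix viewpoint would become more economical in the genuinely matrix-valued cases $({\rm II})_{2n}$ and $({\rm III})_{2n+1}$, but for the rank-one block in type $({\rm I})_n$ your direct approach is the simpler of the two.

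One notational point worth recording for a reader: the product $K(\alpha-\rho)^{-1}L$ as written in the paper, with $K$ an $(n-1)\times 1$ column and $L$ a $1\times(n-1)$ row, is literally an $(n-1)\times(n-1)$ matrix; for the statement to make sense with scalar $\xi$ and $\eta=1$ the intended object is the Schur complement $L(\alpha-\rho)^{-1}K=\sum_j L_j/(\alpha_j-\rho)$, consistent with Lemma~\ref{lem:Xxieta}. You have tacitly used this reading, and it is the correct one.
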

\noindent 

We show how the canonical form \eqref{eq:CI} of the Okubo system ${(\rm I)}_{n+1}$ 
is obtained from the canonical form \eqref{eq:CI} of $({\rm I})_{n}$ by 
the operation of \eqref{eq:makeI}.
By Lemma \ref{lem:mcadd} 
the Katz operation \eqref{eq:makeI} for $({\rm I})_{n}$ 
gives rise to 
the Okubo system $(x-T)\frac{d}{dx}W=A^{mc}W$ where
\begin{equation}
A^{mc}=\begin{pmatrix}
\alpha &0 &K \\
0&\rho&\eta \\
L(\alpha+c)(\alpha-\rho)^{-1} &(\rho+c)\xi &\alpha_n \\
\end{pmatrix}.  
\end{equation}
This matrix $A^{mc}$ is precisely 
the canonical form of Okubo system $({\rm I})_{n+1}$ 
with characteristic exponents 
$(\alpha_i^{(n+1)})_{i=1}^n,\,(\rho_i^{(n+1)})_{i=1}^n$
specified by 
\begin{equation}
\begin{cases}
&\alpha_i^{(n+1)}=\alpha_i \quad(1\leq i \leq n-1),\quad \alpha_n^{(n+1)}=\rho \\
&\rho_i^{(n+1)}=\rho_i, \quad \rho_n^{(n+1)}=-c.  
\end{cases}
\end{equation}

\par\medskip
\noindent{\bf Case ${\rm I}^*$}:
The system of type $({\rm I}^*)_n$ is obtained 
by ${\rm mc}_{\mu}$ with a generic parameter $\mu$ 
from the Schlesinger system
\begin{equation}\label{Sch(n,1)}
\frac{d}{dx}y=\sum_{k=1}^n\frac{\alpha_k}{x-t_k}y
\end{equation}
of rank 1, where $\alpha_k\in \mathbb{C}^*$ $(k=1,\ldots,n)$. 
The resulting system is given by 
\begin{equation}\label{eq:I*}
(x-T)\frac{d}{dx}Z=
\begin{pmatrix}
\alpha_1+\mu&\cdots&\alpha_n\\
\vdots&\ddots&\vdots \\
\alpha_1&\cdots&\alpha_n+\mu
\end{pmatrix}Z
\end{equation}
Then characteristic exponents $(\alpha_i^{(n)})_{i=1}^n$ and $\rho_1^{(n)},\rho_2^{(n)}$ are
determined as
\begin{equation}\label{eigenI*}
\alpha_i^{(n)}=\alpha_i+\mu,\quad \rho_1^{(n)}=\mu,\quad \rho_2^{(n)}=\sum_{k=1}^n\alpha_k+\mu.
\end{equation}

\par\medskip
\noindent{\bf Cases ${\rm II}$ and ${\rm III}$}:
The Okubo system ${\rm (III)}_3$ as in \eqref{eq:CIII} 
is obtained from the system \eqref{HGE}
by
${\rm add}_{(\rho,0)}\circ {\rm mc}_{-a_1-\rho} \circ {\rm add}_{(c,0)}$ 
with 
\begin{equation}
\begin{cases}
&\alpha^{(3)}_1=\alpha^{(2)}_1-a_1-\rho,\quad \alpha_2^{(3)}=\rho
\\
&\beta^{(3)}_1=\beta^{(2)}_1-\rho-a_1,\quad
\\
&\rho^{(3)}_1=-a_1,\quad
\rho^{(3)}_2=\rho_1^{(2)},\quad
\rho^{(3)}_3=\alpha^{(2)}_1+\beta^{(2)}_1-\rho_1^{(2)}. 
\end{cases} 
\end{equation}
 
The Okubo systems 
${\rm (II)}_{2n}$ and ${\rm (III)}_{2n+1}$ 
can be constructed inductively 
by the following Katz operations:
\begin{equation}\label{eq:makeII-III}
\begin{split}
{\rm (II)}_{2n}&={\rm add}_{(0,\rho_2^{(2n-1)})}\circ {\rm mc}_{-b_n-\rho_2^{(2n-1)}} 
\circ {\rm add}_{(0,b_{n-1})}{\rm (III)}_{2n-1}\quad (n\geq 2).\\
{\rm (III)}_{2n+1}&={\rm add}_{(\rho_2^{(2n)},0)}\circ {\rm mc}_{-a_n-\rho_2^{(2n)}} 
\circ {\rm add}_{(a_n,0)}{\rm (II)}_{2n}, \quad(n\geq 2)
\end{split}
\end{equation}

\begin{Lemma}\label{lem:xietaEq}
$(1)$ For the system \eqref{eq:CII} of type ${\rm (II)}_{2n}$, 
$\beta-\rho_2-K(\alpha-\rho_2)^{-1}L$ is a matrix of rank 1.  
It is expressed as 
\begin{equation}
\beta-\rho_2-K(\alpha-\rho_2)^{-1}L=\xi\eta
\end{equation}
with a column vector $\xi$ and a row vector $\eta$ defined by 
\begin{equation}
\xi_i=(\rho_2-\rho_1)\frac{\prod_{k\neq i\, 1\leq k\leq n}(\beta_k-\rho_1)}{\prod_{k=1}^n(\rho_2-\alpha_k)},
\quad
\eta_j=\frac{\prod_{k=1}^n(\beta_j+\alpha_k-\rho_1-\rho_2)}
{\prod_{k\neq j\, 1\leq k\leq n}(\beta_j-\beta_k)}.
\end{equation}
$(2)$ For the system \eqref{eq:CIII} of type ${\rm (III)}_{2n+1}$,
$\alpha-\rho_2-L(\beta-\rho_2)^{-1}K$ is a matrix of rank 1.  
It is expressed as 
\begin{equation}
\alpha-\rho_2-L(\beta-\rho_2)^{-1}K=\xi\eta
\end{equation}
with a column vector $\xi$ and a row vector $\eta$ 
defined by 
\begin{equation}
\xi_i=
\frac{\prod_{k\neq i,\, 1\leq k\leq n+1}(\alpha_k-\rho_1)}
{\prod_{k=1}^n(\rho_2-\beta_k)},
\quad
\eta_j=
(\alpha_j-\rho_2)
\frac{\prod_{ 1\leq k\leq n}(\beta_k+\alpha_{j}-\rho_1-\rho_2)}
{\prod_{k\neq j,\,1\leq k\leq n+1}(\alpha_j-\alpha_k)}. 
\end{equation}
\end{Lemma}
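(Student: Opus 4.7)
The plan has two stages. The rank-one assertion follows abstractly from Lemma \ref{lem:Xxieta}; the actual work lies in identifying the explicit forms of $\xi$ and $\eta$, which I will carry out by a partial-fraction computation.

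For the rank statement in case (1), the diagonalization $A\sim{\rm diag}(\rho_1 I_n,\rho_2 I_{n-1},\rho_3)$ gives ${\rm rank}(A-\rho_2)=n+1$. The standing assumption $\alpha_i\neq\rho_2$ for $1\le i\le n$ makes the block $\alpha-\rho_2$ invertible of rank $n$. Applying Lemma \ref{lem:Xxieta} to $X=A-\rho_2$, taking $(\alpha-\rho_2)$ as the invertible diagonal block, the resulting Schur complement has rank $(n+1)-n=1$ and therefore factors as $\xi\eta$ with a nonzero column vector $\xi$ and a nonzero row vector $\eta$. The analogous argument in case (2), using $(\beta-\rho_2)$ as the invertible block (of rank $n$) and ${\rm rank}(A-\rho_2)=n+1$, yields rank one for the corresponding Schur complement of type $({\rm III})_{2n+1}$.

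To identify $\xi$ and $\eta$ explicitly in case (1), I substitute the canonical form of Theorem \ref{eq:cI} into
\begin{equation*}
\sum_{l=1}^{n}\frac{L_{il}\,K_{lj}}{\alpha_l-\rho_2}
=\frac{\beta_j-\rho_1}{\prod_{k\ne j}(\beta_j-\beta_k)}\sum_{l=1}^{n}
\frac{(\alpha_l-\rho_1)\prod_{k\ne i}(\alpha_l+\beta_k-\rho_1-\rho_2)\prod_{k\ne l}(\alpha_k+\beta_j-\rho_1-\rho_2)}{(\alpha_l-\rho_2)\prod_{k\ne l}(\alpha_l-\alpha_k)}.
\end{equation*}
Each summand is the residue at $z=\alpha_l$ of the rational function
\begin{equation*}
\Phi_{ij}(z)=\frac{(z-\rho_1)\prod_{k\ne i}(z+\beta_k-\rho_1-\rho_2)\prod_{k}(\alpha_k+\beta_j-\rho_1-\rho_2)}
{(z-\rho_2)(z+\beta_j-\rho_1-\rho_2)\prod_{k}(z-\alpha_k)},
\end{equation*}
so the sum equals the negative of the residues at the remaining finite poles $z=\rho_2$ and $z=\rho_1+\rho_2-\beta_j$ plus the residue at infinity (which vanishes by a degree count, using the Fuchs relation). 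Collecting terms and combining with the diagonal contribution $(\beta_j-\rho_2)\delta_{ij}$, the resulting expression for the Schur complement splits, as forced by the rank-one conclusion of the first stage, into the product of a factor depending only on $i$ and a factor depending only on $j$, which are then matched against the claimed $\xi_i$ and $\eta_j$. Case (2) follows by the same residue technique with $\alpha$ and $\beta$ interchanged.

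The step I expect to be the main obstacle is organizing the residue calculation cleanly. Although the rank-one property guarantees a priori that the resulting matrix factors as $\xi\eta$, producing the specific products $\prod_{k\ne i}(\beta_k-\rho_1)$ in $\xi_i$ and $\prod_{k}(\beta_j+\alpha_k-\rho_1-\rho_2)$ in $\eta_j$, together with the precise prefactor $\rho_2-\rho_1$ coming from the residue at $z=\rho_2$, requires careful tracking of the Lagrange-interpolation coefficients and of the cancellation between the residues at $z=\rho_2$ and $z=\rho_1+\rho_2-\beta_j$.
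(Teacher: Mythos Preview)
Your approach is sound, and in fact more explicit than what the paper does: the paper gives no proof of this lemma at all, merely remarking that it ``is essentially used in \cite{YHEq} for the construction of canonical forms'' and that the explicit $\xi,\eta$ ``can also be derived by the property of Cauchy matrices as in \cite{YHEq}.'' Your rank argument via Lemma~\ref{lem:Xxieta} is exactly the intended mechanism, and your residue scheme is a valid, self-contained route to the explicit vectors. At bottom, the Cauchy-matrix identities invoked by Haraoka and your partial-fraction/residue computation are two faces of the same Lagrange-interpolation identity, so the difference is one of packaging rather than substance; your version has the advantage of not relying on an external reference.

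Two small corrections. First, your citation ``Theorem~\ref{eq:cI}'' points to the type~${\rm I}$ canonical form; you mean the formulas \eqref{eq:CII} (and \eqref{eq:CIII} for part~(2)). Second, the vanishing of the residue at infinity for your $\Phi_{ij}$ follows from the degree count alone (numerator degree $n$, denominator degree $n+2$), so the Fuchs relation is not needed there; you may still want it when you reassemble the two surviving residues and the diagonal term $(\beta_j-\rho_2)\delta_{ij}$ into the factored form $\xi_i\eta_j$, since that is where the relation among $\sum\alpha_k$, $\sum\beta_k$, and the $\rho$'s simplifies the constants.
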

\noindent 
This lemma is essentially used in \cite{YHEq}
for the construction of canonical forms.  
The explicit forms of $\xi$ and $\eta$ 
described above can also be derived  
by the property of Cauchy matrices as in \cite{YHEq}.

\par\medskip
\noindent{\bf\boldmath From $(\rm III)_{2n-1}$ to $(\rm II)_{2n}$}
\par\smallskip
We show how the canonical form \eqref{eq:CII} of the Okubo system ${(\rm II)}_{2n}$ 
is obtained from the canonical form \eqref{eq:CIII} of $({\rm III})_{2n-1}$ by 
the operation of \eqref{eq:makeII-III}.
By Lemma \ref{lem:mcadd} 
the Katz operation \eqref{eq:makeII-III} for $({\rm III})_{2n-1}$ 
gives rise to 
the Okubo system $(x-T)\frac{d}{dx}W=A^{mc}W$ where
\begin{equation}
A^{mc}=\begin{pmatrix}
\alpha-\rho_2-b_n & K(\beta+b_{n-1})(\beta-\rho_2)^{-1} &(\rho_2+b_{n-1})\xi\\
L &\beta & 0 \\
\eta& 0 &\rho_2
\end{pmatrix}.  
\end{equation}
This matrix $A^{mc}$ is precisely 
the canonical form of Okubo system $({\rm II})_{2n}$ 
with characteristic exponents 
$(\alpha_i^{(2n)})_{i=1}^n$, 
$(\beta_i^{(2n)})_{i=1}^{n},\,(\rho_i^{(2n)})_{i=1}^3$
specified by 
\begin{equation}
\begin{cases}
&\alpha_i^{(2n)}=\alpha_i-\rho_2-b_{n-1} \quad(1\leq i \leq n),\\
&\beta_i^{(2n)}=\beta_i \quad(1\leq i\leq n),\quad
\beta_{n+1}^{(2n)}=\rho_2,\\
&\rho_1^{(2n)}=-b_{n-1}, \quad \rho_2^{(2n+1)}=\rho_1, \quad\rho_3^{(2n+1)}=\rho_3.  
\end{cases}
\end{equation}

\par\medskip
\noindent{\bf\boldmath 
From $(\rm II)_{2n}$ to $(\rm III)_{2n+1}$}
\par\smallskip
Similarly one can construct the canonical form 
\eqref{eq:CII} of the Okubo system 
$({\rm II})_{2n+2}$ from $({\rm III})_{2n+1}$ by the middle convolution.
From Lemma \ref{lem:mcadd}, the Schlesinger system of \eqref{eq:makeII-III} 
for $({\rm III})_{2n+1}$ is given by the Okubo system 
$(x-T)\frac{d}{dx}W=A^{mc}W$ 
of type $({\rm II}_{2n+2})$, 
where 
\begin{equation}
A^{mc}=
\begin{pmatrix}
\alpha & 0 & K  \\
0 &\rho_2 &\eta \\
L(\alpha+a_n)(\alpha-\rho_2)^{-1} &(\rho_2+a_n)\xi &\beta-a_n-\rho_2
\end{pmatrix}. 
\end{equation}
The 
characteristic exponents 
$(\alpha_i^{(2n+1)})_{i=1}^{n+1},\,(\beta_i^{(2n+1)})_{i=1}^{n},\,(\rho_i^{(2n+1)})_{i=1}^3$
are determined as 
\begin{equation}
\begin{cases}
\alpha_i^{(2n+1)}=\alpha_i\quad(1\leq i \leq n),\quad 
\alpha_{n+1}^{(2n+1)}=\rho_2,\\
\beta_i^{(2n+1)}=\beta_i-\rho_2-a_n\quad(1\leq i\leq n),\\
\rho_1^{(2n+1)}=-a_n, \quad \rho_2^{(2n+1)}=\rho_1, \quad \rho_3^{(2n+1)}=\rho_3.
\end{cases}
\end{equation}
Note that the passage 
from $({\rm II})_2$ to $({\rm III})_3$ is also included 
in this procedure with $\rho_2$ replaced by $\rho$.

\subsection{Recurrence relations 
for the connection coefficients of types ${\rm I}$, ${\rm I}^*$, ${\rm II}$ and 
${\rm III}$}
In what follows we prove our main theorems in Section $2$ using Recurrence relations 
for the connection coefficients. 
\par\medskip
\noindent{\bf Case ${\rm I}$}: 
We determine the connection coefficients 
$C_{i}=C_{i}^{(n)}$ and $D_{j}=D_{j}^{(n)}$ for the canonical solution 
matrix $\Psi^{(n)}(x)$ of 
the Okubo system $({\rm I})_n$ in the canonical form \eqref{eq:CI}.

Recall that 
the Okubo system of type $({\rm I})_n$ is constructed inductively 
by the Katz operation \eqref{eq:makeI}.  
Hence, by Proposition \ref{Prop:MCadd} 
the canonical solution matrix $\Psi^{(n)}(x)$
of type $({\rm I})_{n}$ is obtained inductively in the form 
\begin{equation}\label{sol:typeI}
\Psi^{(n+1)}(x)=GQ(x-t_1)^{\rho}
I^{-a_n-\rho}\big((x-t_1)^{a_n}\Psi^{(n)}(x)\big)\cS\cX R^{-1}, 
\end{equation}
where $R=(R_{(11)},R_{(12)},R_2)$. 
Also, by Theorem \ref{connection} we obtain the 
recurrence relations 
for the connection coefficients 
$C_{i}^{(n+1)}$ and $D_{i}^{(n+1)}$ $(i=1,\ldots,n-1)$ as follows: 
\begin{equation}
\begin{split}
C_{i}^{(n+1)}&=(C_{(11)2}^{mc})_{i}=
-e(\tfrac{1}{2}(\rho+a_n))(t_2-t_1)^{-\rho-a_n}
\frac{\Gamma(1+\rho-\alpha_i^{(n)})}{\Gamma(1-\alpha_i^{(n)}-a_n)}C_{i}^{(n)}
\frac{\Gamma(\alpha_n^{(n)}-\rho-a_n+1)}{\Gamma(\alpha_n^{(n)}+1)}\\
&=-e(\tfrac{1}{2}(\alpha_{n}^{(n+1)}-\rho_{n+1}^{(n+1)}))(t_2-t_1)^{\rho_{n+1}^{(n+1)}-\alpha_{n}^{(n+1)}}
\frac{\Gamma(1+\alpha_{n}^{(n+1)}-\alpha_i^{(n+1)})}{\Gamma(1+\rho_{n+1}^{(n+1)}-\alpha_i^{(n+1)})}
\frac{\Gamma(\alpha_{n+1}^{(n+1)}+1)}{\Gamma(\alpha_n^{(n)}+1)}C_{i}^{(n)}\\
D_{j}^{(n+1)}&=(C_{2(11)}^{mc})_{j}=
e(\tfrac{-1}{2}(\rho+a_n))(t_2-t_1)^{\rho+a_n}
\frac{\Gamma(\rho+a_n-\alpha_n^{(n)})}{\Gamma(-\alpha_n^{(n)})}
D_{j}^{(n)}\frac{\Gamma(\alpha_j^{(n)}-\rho)}{\Gamma(\alpha_j^{(n)}+a_n)}\\
&=e(\tfrac{-1}{2}(\alpha_{n}^{(n+1)}-\rho_{n+1}^{(n+1)}))
(t_2-t_1)^{\alpha_{n}^{(n+1)}-\rho_{n+1}^{(n+1)}}
\frac{\Gamma(-\alpha_{n+1}^{(n+1)})}{\Gamma(-\alpha_n^{(n+1)})}
D_{j}^{(n)}\frac{\Gamma(\alpha_j^{(n+1)}-\alpha_{n}^{(n+1)})}
{\Gamma(\alpha_j^{(n+1)}-\rho_{n+1}^{(n+1)})}
\end{split}
\end{equation}
Therefore, using these relations we can completely determine 
$C_{k}^{(n)}$ and $D_{1}^{(n)}$ in terms of the gamma function, 
if we know the initial data $C_{1}^{(2)}$ and $D_{1}^{(2)}$. 
The other connection coefficients 
$C_{i}^{(n)}$ and $D_{i}^{(n)}$ are obtained 
from  $C_{1}^{(n)}$ and $D_{1}^{(n)}$ 
by using symmetry of the Okubo system.
Let $\sigma^{\alpha}_{ij}$ be the operation which exchanges $\alpha_i^{(n)}$ and $\alpha_j^{(n)}$ $(i,j=1,\ldots,n-1)$.  
Then the canonical form of the Okubo system given by Theorem \ref{eq:cI} satisfies 
\begin{equation}
(x-T)\frac{d}{dx}\widetilde Y=A\widetilde Y
={\rm ad}(S_{ij})(\sigma^{\alpha}_{ij}A)\widetilde Y\qquad \widetilde Y={\rm ad}(S_{ij})(\sigma^{\alpha}_{ij}Y),
\end{equation}
where $S_{ij}$ is the permutation matrix corresponding the transposition $(ij)$ of matrix indices.
For the canonical solution matrix $Y$, the solution matrix ${\rm ad}(S_{ij})(\sigma^{\alpha}_{ij}Y)$
is also canonical solution matrix.
Therefore we obtain the other connection coefficients as 
\begin{equation}
C_i^{(n)}=\sigma_{1i}^{\alpha}C_1^{(n)},
\quad
D_i^{(n)}=\sigma_{1i}^{\alpha}D_1^{(n)},\quad (i=1,\ldots,n-1).
\end{equation}

The connection coefficients $C_{1}^{(2)}$ and $D_{1}^{(2)}$ 
for the Okubo system of type ${\rm (I)}_2$ are computed 
as follows.  
Firstly, 
the equation and the solution of the rank\,$1$ case are given by 
\begin{equation}
\frac{d}{dx}Y=\left(\frac{\alpha_1}{x-t_1}+\frac{\beta_1}{x-t_2}\right)Y
,\quad Y(x)=(x-t_1)^{\alpha_1}(x-t_2)^{\beta_1}.
\end{equation}
Then the canonical form of Okubo system ${\rm (I)}_2$ 
is given as a conjugation of \eqref{HGE}. 
The monodromy matrices ${\rm MC}_{\lambda}(e(\alpha_1),e(\beta_{1}))$ are 
given by 
\begin{equation}\label{HGEM}
M_1=
\begin{pmatrix}
e(\alpha_1) & e(\alpha_2-\rho_1)-1 \\
 0      & 1
\end{pmatrix},
\quad
M_2=
\begin{pmatrix}
 1 &  0 \\
e(\rho_1)(e(\alpha_1-\rho_1)-1) & e(\alpha_2)
\end{pmatrix}.
\end{equation}
Then the coefficients $r_1^{(2)}$ and $r_2^{(2)}$ are computed as 
\begin{equation}
\begin{split}
r_1^{(2)}=(t_1-t_2)^{\alpha_2-\rho_1}\widetilde B(\alpha_1-\rho_1,\rho_1+1), \quad
r_2^{(2)}=(\alpha_2-\rho_1)(t_2-t_1)^{\alpha_1-\rho_1}
\widetilde B(\alpha_2-\rho_1,\rho_1+1)
\end{split}
\end{equation}
From $r_1^{(2)}$and $r_2^{(2)}$, we obtain the connection coefficients
$C_{1}^{(2)}$ and $D_{1}^{(2)}$:
\begin{equation}
\begin{split}
C_{1}^{(2)}&=
\frac{(e(\alpha_2-\rho_1)-1)}{(e(\alpha_1)-1)}
\frac{r_1^{(2)}}{r_2^{(2)}} \\
&=\frac{1}{\alpha_2-\rho_1}
\frac{e(\alpha_1-\rho_1)-1}{e(\alpha_1)-1}
\frac{(t_1-t_2)^{\alpha_2-\rho_1}}{(t_2-t_1)^{\alpha_1-\rho_1}}
\frac{\Gamma(\alpha_1-\rho_1)\Gamma(\alpha_2+1)}
{\Gamma(\alpha_2-\rho_1)\Gamma(\alpha_1+1)} \\
&=e(\tfrac{-\rho_1}{2})
\frac{(t_1-t_2)^{\rho_2-\alpha_1}}{(t_2-t_1)^{\alpha_1-\rho_1}}
\frac{\Gamma(-\alpha_1)\Gamma(\alpha_2+1)}
{\Gamma(1+\rho_2-\alpha_1)\Gamma(1+\rho_1-\alpha_1)} \\
D_{1}^{(2)}&=
\frac{r_2^{(2)}}{r_1^{(2)}}\frac{e(\rho_1)(e(\alpha_2-\rho_1)-1)}{e(\alpha_2)-1}\\
&=e(\tfrac{\rho_1}{2})\frac{(t_2-t_1)^{\alpha_1-\rho_1}}{
(t_1-t_2)^{\alpha_2-\rho_1}}
\frac{\Gamma(-\alpha_2)\Gamma(\alpha_1+1)}
{\Gamma(\alpha_1-\rho_1)\Gamma(\alpha_1-\rho_2)}.
\end{split}
\end{equation}
Therefore we can obtain $C_{1}^{(n)}$ 
from $C_{1}^{(2)}$as follows: 
\begin{equation}
\small
\begin{split}
C_1^{(n)}=&(-1)^{n-2}\prod_{k=2}^{n-1}e(\tfrac{1}{2}(\alpha_{k}^{(k+1)}-\rho_{k+1}^{(k+1)}))(t_2-t_1)^{\rho_{k+1}^{(k+1)}-\alpha_{k}^{(k+1)}}
\frac{\Gamma(1+\alpha_{k}^{(k+1)}-\alpha_1^{(k+1)})}
{\Gamma(1+\rho_{k+1}^{(k+1)}-\alpha_1^{(k+1)})}
\frac{\Gamma(\alpha_{k+1}^{(k+1)}+1)}{\Gamma(\alpha_k^{(k)}+1)}C_1^{(2)}\\
=&(-1)^{n}e(\tfrac{1}{2}(\rho_2^{(n)}-\alpha_1^{(n)}-\alpha_n^{(n)}))
\frac{(t_1-t_2)^{\rho_2^{(n)}-\alpha_1^{(n)}}}{(t_2-t_1)^{\rho_2^{(n)}-\alpha_n^{(n)}}}
\Gamma(-\alpha_1^{(n)})\Gamma(\alpha_n^{(n)}+1)
\frac{\prod_{k=2}^{n-1}\Gamma(1+\alpha_{k}^{(n)}-\alpha_1^{(n)})}{\prod_{k=1}^n\Gamma(1+\rho_{k}^{(n)}-\alpha_1^{(n)})}
\end{split}
\end{equation}
The connection coefficient $D_{1}^{(n)}$ is obtained in the same way.  
Furthermore, exchanging $\alpha_1^{(n)}$, $\alpha_i^{(n)}$, 
we obtain $C_{i}^{(n)}$ and $D_{i}^{(n)}$.  
This completes the proof of Theorem \ref{mainI}.

\par\medskip
\noindent{\bf Case ${\rm I}^*$}:
We consider the Schlesinger system of rank $1$ and its solution defined by 
\begin{equation}
\frac{d}{dx}Y=\sum_{k=1}^n\frac{\alpha_k}{x-t_k}Y,\quad Y(x)=\prod_{k=1}^n(x-t_k)^{\alpha_k}.
\end{equation}
Then the Okubo system of type $({\rm I}^*)_n$ is constructed by $mc_{\mu}$, 
and its canonical solution matrix is expressed in the form 
$I^{\mu}(Y(x))R^{-1}$, where $R={\rm diag}(r_1,\ldots,r_n)$.
The components of the diagonal matrix $R$ are computed as
\begin{equation}
\begin{split}
r_i&=\lim_{x\to t_i}(x-t_i)^{-\alpha_i-\mu}\int_{L_i}(x-u)^{\mu}\prod_{k=1}^n(u-t_k)^{\alpha_k}
\frac{du}{u-t_i}\\
&=\lim_{x\to t_i}(x-t_i)^{-\alpha_i^{(n)}}\int_{L_i}(x-u)^{\rho_1^{(n)}}\prod_{k=1}^n
(u-t_k)^{\alpha_k^{(n)}-\rho_1^{(n)}}
\frac{du}{u-t_i}\\
&=\prod_{k\neq i,1\leq k\leq n}(t_i-t_k)^{\alpha_k^{(n)}-\rho_1^{(n)}}
\widetilde B(\alpha_i^{(n)}-\rho_1^{(n)},\rho_1^{(n)}+1).
\end{split}
\end{equation} 
Therefore the connection coefficients $C_{ij}$ are obtained as follows:
\begin{equation}
\begin{split}
C_{ij}&=\frac{e(\rho_1^{\delta(i>j)})(e(\alpha_j-\rho_1)-1)}{e(\alpha_i)-1}
\frac{r_i}{r_j}\\
&=\begin{cases}
&\ds e(\tfrac{\rho_1}{2})
\frac{\prod_{k\neq i,1\leq k\leq n}(t_i-t_k)^{\alpha_k-\rho_1}}
{\prod_{k\neq j,1\leq k\leq n}(t_j-t_k)^{\alpha_k-\rho_1}}
\frac{\Gamma(-\alpha_i)\Gamma(\alpha_j+1)}
{\Gamma(\alpha_j-\rho_1)\Gamma(1+\rho_1-\alpha_i)}
\quad (i<j)\\[10pt]
&\ds e(\tfrac{-\rho_1}{2})
\frac{\prod_{k\neq i,1\leq k\leq n}(t_i-t_k)^{\alpha_k-\rho_1}}
{\prod_{k\neq j,1\leq k\leq n}(t_j-t_k)^{\alpha_k-\rho_1}}
\frac{\Gamma(-\alpha_i)\Gamma(\alpha_j+1)}
{\Gamma(\alpha_j-\rho_1)\Gamma(1+\rho_1-\alpha_i)}
\quad(i>j)
\end{cases}
\end{split}
\end{equation}
This completes the proof of Theorem \ref{mainI*} .

\par\medskip
\noindent{\bf Cases ${\rm II}$ and ${\rm III}$}:
Keeping the notation of Section 2, 
we consider the Okubo system of type 
$({\rm II})_{2n}$ 
or $({\rm III})_{2n+1}$, and set 
$m=n$ or $m=n+1$ respectively. 
We denote by 
$\Psi^{(m+n)}(x)=(\psi_1(x),\ldots ,\psi_{m+n}(x))$ 
the canonical solution matrix 
of rank $m+n$. 
Taking account of the operation \eqref{mcadd}, 
the canonical solution matrix is constructed as follows

\begin{Proposition}\label{Th:Sol23}
$(1)$ Let $\Psi^{(2n-1)}(x)$ be the canonical solution matrix of type 
$({\rm III})_{2n-1}$.  
Then the solution matrix 
\begin{equation}\label{sol:3to2}
\Psi^{(2n)}(x)=GQ(x-t_2)^{\rho_2}I^{-b_{n-1}-\rho_2}
\big((x-t_2)^{b_{n-1}}\Psi^{(2n-1)}(x)\big)\cS\cX R^{-1}
\end{equation}
 is the canonical solution matrix of type 
$({\rm II})_{2n}$,where $R=(R_1,R_{(21)},R_{(22)})$.
\newline
$(2)$ Let $\Psi^{(2n)}(x)$ be the canonical solution matrix of type 
$({\rm II})_{2n}$.  
Then the solution matrix 
\begin{equation}\label{sol:2to3}
\Psi^{(2n+1)}(x)=GQ(x-t_1)^{\rho_2}
I^{-a_n-\rho_2}\big((x-t_1)^{a_n}\Psi^{(2n)}(x)\big)\cS\cX R^{-1}
\end{equation}
 is the canonical solution matrix of type $({\rm III})_{2n+1}$,
where $R=(R_{(11)},R_{(12)},R_2)$.
\end{Proposition}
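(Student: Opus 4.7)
The plan is to combine Proposition \ref{Prop:MCadd} with the identification of the resulting Okubo system as the canonical form carried out in Section 5.1. For part (1), I apply Proposition \ref{Prop:MCadd} to the Okubo system $({\rm III})_{2n-1}$ with $k=2$, $c=b_{n-1}$, $\rho=\rho_2$, and $\Psi=\Psi^{(2n-1)}$. This produces a fundamental solution matrix
$Y^{mc}(x)=GQ(x-t_2)^{\rho_2}\,I^{-b_{n-1}-\rho_2}\!\big((x-t_2)^{b_{n-1}}\Psi^{(2n-1)}(x)\big)\,\cS\cX$
of the Okubo system \eqref{eq:mcadd}. Combining this with the explicit identification ``From $({\rm III})_{2n-1}$ to $({\rm II})_{2n}$'' of Section 5.1, which verifies that this Okubo system is exactly the canonical form of type $({\rm II})_{2n}$ with the parameters $(\alpha_i^{(2n)},\beta_i^{(2n)},\rho_i^{(2n)})$ recorded there, I conclude that $Y^{mc}(x)$ is a fundamental solution matrix of the canonical form of $({\rm II})_{2n}$.

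To finish, I would pass from $Y^{mc}(x)$ to the canonical solution matrix $\Psi^{(2n)}(x)$ by right multiplication by a constant, invertible, block diagonal matrix $R^{-1}$. The block structure is dictated by the output of the $L$-reduction: $R_1$ is $n\times n$ (normalizing the column block attached to $t_1$), while $R_{(21)}$ and $R_{(22)}$ have sizes $(n-1)\times(n-1)$ and $1\times 1$ respectively (together normalizing the column block attached to $t_2$, split because the $L$-reduction at $k=2$ divides the second block into a regular part with exponents $\beta_1,\ldots,\beta_{n-1}$ and an extra part with exponent $\rho_2$). The defining properties
$\psi_j(x)=(x-t_1)^{\alpha_j^{(2n)}}({\bs e}_j+O(x-t_1))$ for $j=1,\ldots,n$ and
$\psi_{n+j}(x)=(x-t_2)^{\beta_j^{(2n)}}({\bs e}_{n+j}+O(x-t_2))$ for $j=1,\ldots,n$
then uniquely determine the three blocks of $R$ by reading off the leading coefficients of the corresponding column blocks of $Y^{mc}(x)$.

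The computational heart is the local analysis of the Euler transform $I^{-b_{n-1}-\rho_2}((x-t_2)^{b_{n-1}}\Psi^{(2n-1)}(x))$ at $x=t_1$ and $x=t_2$, carried out exactly as in the proof of Theorem \ref{thm:IY}: a change of variable $u=(x-t_i)v+t_i$ reduces the leading term to the regularized beta function $\widetilde B$ with a matrix argument. The block $R_1$ then comes out in the closed form derived in Section 4.3, and $R_{(21)}$ comes from the analogous expansion using the invertible block $A_{22}^{(2n-1)}-\rho_2$. The block $R_{(22)}$ corresponds to the extra one-dimensional solution produced by the $L$-reduction; as remarked at the end of Section 4.3, it need not factor through gamma functions, but its invertibility is automatic from the fundamental solution property of $Y^{mc}(x)$ together with the invertibility of $R_1$ and $R_{(21)}$.

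The main obstacle is the bookkeeping at $x=t_2$: one must verify that multiplication by $R^{-1}$ produces exactly the leading coefficients ${\bs e}_{n+j}$ with no cross-mixing between the $(k1)$-subblock (of exponents $\beta_1^{(2n)},\ldots,\beta_{n-1}^{(2n)}$) and the $(k2)$-subblock (of exponent $\beta_n^{(2n)}=\rho_2$). This uses the genericity assumption that these exponents are mutually non-integer-shifted, which lets one invoke uniqueness of local canonical solution matrices to separate the two subblocks. Part (2) is proved by the same argument with $({\rm III})_{2n-1},({\rm II})_{2n}$ replaced by $({\rm II})_{2n},({\rm III})_{2n+1}$ and with $k=1$, $c=a_n$, $\rho=\rho_2$, invoking the construction ``From $({\rm II})_{2n}$ to $({\rm III})_{2n+1}$'' of Section 5.1 and interchanging the roles of $t_1$ and $t_2$.
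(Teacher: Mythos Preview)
Your proposal is correct and follows precisely the approach the paper intends: the paper states Proposition \ref{Th:Sol23} without a separate proof, treating it as the direct specialization of Proposition \ref{Prop:MCadd} (which yields the fundamental solution $Y^{mc}(x)$) together with the identification in Section~5.1 of the resulting Okubo system with the canonical form of type $({\rm II})_{2n}$ (resp.\ $({\rm III})_{2n+1}$), followed by the block-diagonal normalization $R^{-1}$ introduced in Section~4.3. Your block sizes for $R_1,R_{(21)},R_{(22)}$ and the local analysis via the regularized beta function are exactly the ingredients the paper uses to compute $R_i$ and $R_{(k1)}$, and your remark that $R_{(k2)}$ need not be expressible in gamma functions but is invertible matches the paper's comment at the end of Section~4.3.
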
  

We determine the connection coefficients 
$C_{ij}=C_{ij}^{(n+m)}$ 
and $D_{ij}=D_{ij}^{(n+m)}$ for $\Psi^{(m+n)}(x)$. 
From Theorem \ref{connection}, we obtain the recurrence relations
\begin{equation}
\small
\begin{split}
C_{ij}^{(2n)}&=(C_{1(21)}^{mc})_{ij}=
e(\tfrac{1}{2}(\rho_2+b_n))(t_1-t_2)^{\rho_2+b_{n-1}}
\frac{\Gamma(\rho_2+b_{n-1}-\alpha_i)}{\Gamma(-\alpha_i)}
C_{ij}^{(2n)}\frac{\Gamma(\beta_j-\rho_2)}{\Gamma(\beta_j+c)}\\
&=e(\tfrac{1}{2}(\beta_{n}^{(2n)}-\rho_1^{(2n)}))
(t_1-t_2)^{\beta_{n}^{(2n)}-\rho_1^{(2n)}}
\frac{\Gamma(-\alpha_i^{(2n-1)})}{\Gamma(-\alpha_i^{(2n)})}
C_{ij}^{(2n-1)}\frac{\Gamma(\beta_j^{(2n)}-\beta_{n}^{(2n)})}
{\Gamma(\beta_j^{(2n)}-\rho_1^{(2n)})},\\
D_{ij}^{(2n)}&=(C_{(21)1}^{mc})_{ij}=
-e(\tfrac{-1}{2}(\rho_2+b_{n-1}))(t_1-t_2)^{-\rho_2-b_{n-1}}
\frac{\Gamma(1+\rho_2-\beta_i)}{\Gamma(1-\beta_i-b_{n-1})}D_{ij}^{(2n)}
\frac{\Gamma(\alpha_j-\rho-b_{n-1}+1)}{\Gamma(\alpha_j+1)}\\
&=-e(\tfrac{-1}{2}(\beta_{n}^{(2n)}-\rho_1^{(2n)}))
(t_1-t_2)^{\rho_1^{(2n)}-\beta_{n}^{(2n)}}
\frac{\Gamma(1+\beta_{n}^{(2n)}-\beta_i^{(2n)})}
{\Gamma(1+\rho_1^{(2n)}-\beta_i^{(2n)})}
D_{ij}^{(2n-1)}\frac{\Gamma(\alpha_j^{(2n)}+1)}{\Gamma(\alpha_j^{(2n-1)}+1)},
\end{split}
\end{equation}
\begin{equation}
\small
\begin{split}
C_{ij}^{(2n+1)}&=
-e(\tfrac{1}{2}(\alpha_{n+1}^{(2n+1)}-\rho_1^{(2n+1)}))
(t_2-t_1)^{-\alpha_{n+1}^{(2n+1)}+\rho_1^{(2n+1)}}
\frac{\Gamma(1+\alpha_{n+1}^{(2n+1)}-\alpha_i^{(2n+1)})}
{\Gamma(1+\rho_1^{(2n+1)}-\alpha_i^{(2n+1)})}C_{ij}^{(2n)}
\frac{\Gamma(\beta_j^{(2n+1)}+1)}{\Gamma(\beta_j^{(2n)}+1)},\\
D_{ij}^{(2n+1)}
&=e(\tfrac{-1}{2}(\alpha_{n+1}^{(2n+1)}-\rho_1^{(2n+1)}))
(t_2-t_1)^{\alpha_{n+1}^{(2n+1)}-\rho_1^{(2n+1)}}
\frac{\Gamma(-\beta_i^{(2n+1)})}{\Gamma(-\beta_i^{(2n)})}
D_{ij}^{(2n)}\frac{\Gamma(\alpha_j^{(2n+1)}-\alpha_{n+1}^{(2n+1)})}
{\Gamma(\alpha_j^{(2n+1)}-\rho_1^{(2n+1)})}. 
\end{split}
\end{equation}

Therefore using these relations, we can completely determine 
$C_{11}^{(m+n)}$ and $D_{11}^{(m+n)}$ in terms of the gamma function, 
if we know the initial data $C_{11}^{(2)}$ and $D_{11}^{(2)}$. 
The other connection coefficients 
$C_{ij}^{(m+n)}$ and $D_{ji}^{(m+n)}$ are obtained 
from  $C_{11}^{(m+n)}$ and $D_{11}^{(m+n)}$ 
by exchanging 
$\alpha_1$, $\beta_1$ and $\alpha_i$, $\beta_j$. 
The connection coefficients $C_{11}^{(2)}$ and $D_{11}^{(2)}$ 
for the Okubo system \eqref{HGE} of type ${\rm (II)}_2$ are given 
as follows. 
We remark that we use characteristic exponent $\beta_1^{(2)}$ instead of $\alpha_2^{(2)}$.

\begin{equation}
\begin{split}
C_{11}^{(2)}
&=-e(\tfrac{-\rho_1}{2})
\frac{(t_1-t_2)^{\beta_1-\rho_1}}{(t_2-t_1)^{\alpha_1-\rho_1}}
\frac{\Gamma(-\alpha_1)\Gamma(\beta_1+1)}
{\Gamma(\beta_1-\rho_1)\Gamma(1-\alpha_1+\rho_1)} \\
D_{11}^{(2)}
&=-e(\tfrac{\rho_1}{2})\frac{(t_2-t_1)^{\alpha_1-\rho_1}}{(t_1-t_2)^{\beta_1-\rho_1}}
\frac{\Gamma(-\beta_1)\Gamma(\alpha_1+1)}
{\Gamma(\alpha_1-\rho_1)\Gamma(1-\beta_1-\rho_1)}.
\end{split}
\end{equation}
Therefore we can obtain $C_{11}^{(m+n)}$ 
from $C_{11}^{(2)}$as follows: 
\begin{equation}
\begin{split}
C_{11}^{(2n)}&=(-1)^{n-1}
\prod_{k=2}^ne(\tfrac{1}{2}(\alpha_k^{(2k-1)}-\rho_1^{(2k-1)}+\beta_{k}^{(2k)}-\rho_1^{(2k)}))
\frac{(t_1-t_2)^{\beta_k^{(2k)}-\rho_1^{(2k)-1}}}{(t_2-t_1)^{\alpha_k^{(2k-1)}-\rho_1^{(2k-1)}}}
\\
&\prod_{k=2}^{n}
\frac{\Gamma(-\alpha_1^{(2k)})}{\Gamma(-\alpha_1^{(2k-1)})}
\frac{\Gamma(\beta_1^{(2k)}-\beta_{k}^{(2k)})}
{\Gamma(\beta_1^{(2k)}-\rho_1^{(2k)})}
\frac{\Gamma(1+\alpha_{k}^{(2k-1)}-\alpha_1^{(2k-1)})}
{\Gamma(1+\rho_1^{(2k-1)}-\alpha_1^{(2k-1)})}
\frac{\Gamma(\beta_1^{(2k-1)}+1)}{\Gamma(\beta_1^{(2k-2)}+1)}
C_{11}^{(2)}\\
&=(-1)^{n-1}e(\tfrac{\alpha_2^{(3)}+\beta_2^{(4)}-\rho_1^{(2n)}-\rho_2^{(2n)}}{2})
\frac{(t_1-t_2)^{\beta_2^{(4)}-\rho_2^{(2n)}}}{(t_2-t_1)^{\alpha_2^{(3)}-\rho_1^{(2n)}}}
\frac{\Gamma(\beta_1^{(2n)}+1)\Gamma(-\alpha_1^{(2n)})}
{\Gamma(\beta_{1}^{(4)}+1)\Gamma(-\alpha_1^{(3)})}\\
&\prod_{k=2}^{n}
\frac{\Gamma(\beta_1^{(2k)}-\beta_{k}^{(2k)})}
{\Gamma(\beta_1^{(2k)}-\rho_1^{(2k)})}
\frac{\Gamma(1+\alpha_{k}^{(2k-1)}-\alpha_1^{(2k-1)})}
{\Gamma(1+\rho_1^{(2k-1)}-\alpha_1^{(2k-1)})}
C_{11}^{(2)}\\
&=(-1)^{n}
e(\tfrac{1}{2}(\rho_3^{(2n)}-\alpha_1^{(2n)}-\beta_1^{(2n)}))
\frac{(t_1-t_2)^{\rho_3^{(2n)}-\alpha_1^{(2n)}}}{(t_2-t_1)^{\rho_3^{(2n)}-\beta_1^{(2n)}}}
\frac{\Gamma(\beta_1^{(2n)}+1)\Gamma(-\alpha_1^{(2n)})}
{\Gamma(1+\rho_1^{(2n)}-\alpha_1^{(2n)})\Gamma(\beta_1^{(2n)}-\rho_1^{(2n)})}\\
&\prod_{k=2}^{n}
\frac{\Gamma(1+\alpha_{k}^{(2n)}-\alpha_1^{(2n)})}
{\Gamma(1+\rho_1^{(2n)}+\rho_2^{(2n)}-\alpha_1^{(2n)}-\beta_{k}^{(2n)})}
\frac{\Gamma(\beta_1^{(2n)}-\beta_{k}^{(2n)})}
{\Gamma(\beta_1^{(2n)}+\alpha_{k}^{(2n)}-\rho_1^{(2n)}-\rho_2^{(2n)})}\\
\end{split}
\end{equation}

The connection coefficients $D_{11}^{(2n)}$,\,$C_{11}^{(2n+1)}$, 
and $D_{11}^{(2n+1)}$ are obtained in the same way.  
Furthermore, 
exchanging $\alpha_1^{(m+n)}$, $\beta_1^{(m+n)}$ and 
$\alpha_i^{(m+n)}$, $\beta_j^{(m+n)}$ in 
$C_{11}^{(m+n)}$, $D_{11}^{(m+n)}$, 
we obtain 
$C_{ij}^{(m+n)}$ and $D_{ji}^{(m+n)}$.  
This completes the proof of Theorem \ref{mainII,III}. 

\section*{Acknowledgements}
The author is grateful to Professors Toshio Oshima, 
Masatoshi Noumi 
and Yoshishige Haraoka for their valuable comments 
and suggestions.

\end{document}